\documentclass[11pt,letterpaper,leqno]{amsart}
\usepackage{amsfonts}
\usepackage{pgfplots}
\pgfplotsset{compat=1.18}
\definecolor{graphscarlet}{RGB}{244,0,121}
\definecolor{graphazure}{RGB}{0,139,255}
\definecolor{graphforest}{RGB}{0,238,145}

\usepackage[letterpaper,textwidth=5.75in,textheight=8in,top=1.625in,centering]{geometry}
\usepackage{amssymb,mathtools}
\usepackage{microtype,booktabs,enumitem,needspace,placeins}

\makeatletter
\def\subsection{\@startsection{subsection}{2}%
  \z@{.5\linespacing\@plus.7\linespacing}{.75\baselineskip}%
  {\normalfont\bfseries}}
\makeatother
\usepackage[colorlinks=true,linkcolor=blue,citecolor=blue,urlcolor=black]{hyperref}

\setlist{itemsep=3pt,topsep=5pt}
\numberwithin{equation}{section}
\newtheorem{theorem}{Theorem}[section]
\newtheorem{proposition}[theorem]{Proposition}
\newtheorem{lemma}[theorem]{Lemma}
\newtheorem{corollary}[theorem]{Corollary}
\theoremstyle{definition}
\newtheorem{definition}[theorem]{Definition}

\newtheorem{remark}[theorem]{Remark}
\newtheorem*{thoughtexperiment}{Thought experiment}
\newenvironment{acknowledgements}{\section*{Acknowledgements}}{}
\newcommand{\doi}[1]{{\hypersetup{urlcolor=black}\href{https://doi.org/#1}{\nolinkurl{#1}}}}
\renewcommand{\MR}[1]{{\hypersetup{urlcolor=black}\href{https://mathscinet.ams.org/mathscinet/article?mr=#1}{MR#1}}}

\newcommand{\dd}{\,\mathrm d}
\newcommand{\proofpart}[2]{\par\addvspace{\medskipamount}\noindent\textit{Part #1. #2.}\par\nobreak\smallskip\noindent}

\hypersetup{pdftitle={Oscillation of Second-Order Delay Equations with Positive Feedback}}
\subjclass[2020]{34K11, 34K15, 34C10}
\keywords{second-order delay equation, positive feedback, oscillation, positive decreasing solution, delay bounds}

\begin{document}
\title[Oscillation of Second-Order Delay Equations with Positive Feedback]{%
Oscillation of Second-Order Delay Equations with Positive Feedback}
\date{}

\begin{abstract}
We study oscillation of the second-order delay equation
\[
 x''(t)=x(\tau(t)),
\]
with a nondecreasing delayed argument $\tau(t)\le t$ satisfying $\tau(t)\overset{t\to\infty}{\to}\infty$.  Set 
\begin{equation*}%
a=\liminf_{t\to\infty}(t-\tau(t)),\qquad
b=\limsup_{t\to\infty}(t-\tau(t)).\end{equation*}
We describe a continuous strictly decreasing function
\begin{equation*}\beta:[0,2/e]\longrightarrow[2/e,\sqrt2],\qquad
\beta(0)=\sqrt2,\qquad\beta(2/e)=2/e, \end{equation*}
such that, if
the equation has an eventually positive decreasing solution, then
\begin{equation*}a\leq2/e,\qquad b\le\beta(a).\end{equation*}
The bound $b\le\beta(a)$ is sharp and
interpolates
between the classical \(\limsup\) threshold and the \(\liminf\)  autonomous
threshold.  A detailed spectral analysis gives a simple interpretation of the result and links it directly to the oscillation theory of first-order delay equations.
\end{abstract}

\author{John Ioannis Stavroulakis}
\address{School of Mathematics, Georgia Institute of Technology, Atlanta, GA 30332}
\email{jstavroulakis3@gatech.edu}

\maketitle

\section{Introduction}

Consider the second-order nonautonomous delay equation
\begin{equation}  \label{eq:positive-general}
x''(t)=p(t)x(\tau(t)),\end{equation}
where the coefficient $p:\mathbb R\to\mathbb [0,\infty)$ is locally integrable, and the delayed argument $\tau:\mathbb R\to\mathbb R$ is measurable
and satisfies
\[
 \tau(t)\le t,\qquad \lim_{t\to\infty}\tau(t)=\infty.
\]
Throughout the paper, differential equations
and inequalities are understood in the Carath\'eodory sense.

Oscillation of \eqref{eq:positive-general} refers to nonexistence of
eventually positive nonincreasing solutions.
The study of oscillation for this equation goes back to Kamenskii and Myshkis~\cite{Kamenskii,Myshkis1972}. For \eqref{eq:positive-general}, we assume throughout that $\int^\infty p=\infty$.
Under this assumption, positive solutions either tend to infinity or eventually
decrease to zero, in which case $x'(t)\to0$ \cite{Kamenskii,Ladde}.  We can distinguish between \(\limsup\) and \(\liminf\) oscillation criteria.

\begin{proposition}[{\cite{Gustafson,Ladde}}]
\label{prop:BDS16}
Assume that $p(t)\geq0$, $t\in\mathbb R$, the delayed argument $\tau$
is nondecreasing, and that
\[
 \limsup_{t\to\infty}\int_{\tau(t)}^t(s-\tau(t))p(s)\dd s>1.
\]
Then all positive solutions of \eqref{eq:positive-general} are unbounded.
\end{proposition}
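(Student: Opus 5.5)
We argue by contradiction, using the dichotomy recalled above. Since $\int^\infty p=\infty$, every positive solution either tends to infinity or eventually decreases to zero with $x'(t)\to0$. So it suffices to show that there is no eventually positive solution with $x'\le0$ and $x(t)\downarrow0$, $x'(t)\uparrow0$. Suppose such an $x$ exists, with $x(t)>0$, $x(\tau(t))>0$ and $x'(t)\le0$ for all $t\ge T$. Then $x''=p\,x(\tau)\ge0$, so $x'$ is nondecreasing and tends to $0$. Integrating from $s$ to $\infty$ gives the basic identity
\[
-x'(s)=\int_s^\infty p(u)\,x(\tau(u))\dd u\qquad (s\ge T).
\]

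Next I would fix $t$ large and estimate $x(\tau(t))$ from below using only the values of $p$ on $[\tau(t),t]$. Integrate $-x'$ from $\tau(t)$ to $t$ and discard the nonnegative term $x(t)$:
\[
x(\tau(t))\ \ge\ x(\tau(t))-x(t)=\int_{\tau(t)}^t\bigl(-x'(s)\bigr)\dd s\ \ge\ \int_{\tau(t)}^t\int_s^t p(u)\,x(\tau(u))\dd u\dd s .
\]
Exchanging the order of integration (Fubini, valid because everything is nonnegative) turns the right-hand side into
\[
\int_{\tau(t)}^t (u-\tau(t))\,p(u)\,x(\tau(u))\dd u .
\]
For $u\in[\tau(t),t]$, monotonicity of $\tau$ gives $\tau(u)\le\tau(t)$. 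Since $x$ is nonincreasing, $x(\tau(u))\ge x(\tau(t))$. Therefore
\[
x(\tau(t))\ \ge\ x(\tau(t))\int_{\tau(t)}^t (u-\tau(t))p(u)\dd u .
\]
Because $x(\tau(t))>0$, this yields $\int_{\tau(t)}^t (u-\tau(t))p(u)\dd u\le 1$ for all large $t$.

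This contradicts the hypothesis: the $\limsup$ exceeds $1$, so the integral is greater than $1$ along a sequence $t_n\to\infty$. Hence no eventually positive decreasing solution exists, and every positive solution is unbounded.

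The main obstacle is the bookkeeping in the double integral. One must keep the inner integral cut off at $t$, not at $\infty$, so that the weight $(u-\tau(t))$ comes out exactly. One must also make sure that $\tau(u)\ge T$ for $u\ge\tau(t)$, so that positivity and monotonicity apply; this holds because $\tau(t)\to\infty$ and $t$ can be taken large.
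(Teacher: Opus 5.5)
Your proof is correct. The identity $-x'(s)=\int_s^\infty p(u)x(\tau(u))\dd u$, the truncation of the inner integral at $t$, Tonelli, and the monotonicity step $x(\tau(u))\ge x(\tau(t))$ for $u\in[\tau(t),t]$ together force $\int_{\tau(t)}^t(u-\tau(t))p(u)\dd u\le1$ for all large $t$, which contradicts the hypothesis.

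The paper quotes this proposition from the literature without proving it, so there is no proof of its own to compare against. Your argument is the standard one, and it is the same mechanism the paper uses in \eqref{eq:positive-final-integral} and in the backward Taylor estimate $\mathcal E(a,b)\ge\Phi_{a,b}(b)+\tfrac{b^2}{2}\mathcal E(a,b)$ in the proof of Proposition~\ref{prop:positive-crossing}. That estimate is the case $p\equiv1$ of your inequality, and it yields $b<\sqrt2$ there.
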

Related \(\limsup\) criteria appear in~\cite{LLP}. We also recall a classical comparison theorem. 

\begin{proposition}[{\cite{ABBD,BDK,LabovskiThesis}}]
\label{prop:BDS17}
Assume that $p(t)\geq0$, $t\in\mathbb R$, and $\tau(t)\le t$. Put
\[
 (L_0x)(t):=x''(t)-p(t)\mathbf 1_{\{\tau(t)\ge0\}}x(\tau(t)),
 \qquad t\ge0.
\]
Let $\mathcal W_0$ denote the Wronskian of the fundamental system of $L_0x=0$.
\begin{enumerate}
\item The following are equivalent:
\begin{enumerate}
\item there exists a positive nonincreasing solution $x$ of
\[
 L_0x\ge0\qquad\text{on }[0,\infty).
\]
\item there exists a positive nonincreasing solution $x$ of
\[
 L_0x=0\qquad\text{ on }[0,\infty).
\]
\item $\mathcal W_0(t)\ne0$ for every $t\ge0$.
\end{enumerate}

\item Fix $\omega>0$. The following are equivalent:
\begin{enumerate}
\item there exists a function $v$ such that
\[
 v(t)>0,\qquad v'(t)<0\quad(0\le t<\omega),\qquad
 L_0v\ge0,
\]
and
\[
 |v(\omega)|+|v'(\omega)|>0.
\]
\item the boundary-value problem
\[
 L_0z=0,\qquad z(0)=1,\qquad z(\omega)=0,
\]
has a solution satisfying
\[
 z(t)>0,\qquad z'(t)<0\qquad(0\le t<\omega).
\]
\item $\mathcal W_0(t)\ne0$ for every $0\le t\le\omega$.
\end{enumerate}
\end{enumerate}
\end{proposition}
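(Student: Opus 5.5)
The plan is to prove the three implications (a)$\Rightarrow$(c)$\Rightarrow$(b)$\Rightarrow$(a) in Part~2 on a fixed interval $[0,\omega]$, and then to obtain Part~1 by letting $\omega\to\infty$.

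\emph{Setup.} Because of the factor $\mathbf 1_{\{\tau(t)\ge0\}}$, the equation $L_0x=0$ on $[0,\infty)$ needs no initial function; it is a Volterra-type functional differential equation. Its solution space is therefore two-dimensional, spanned by $x_1$ (with $x_1(0)=1$, $x_1'(0)=0$) and $x_2$ (with $x_2(0)=0$, $x_2'(0)=1$). I set $\mathcal W_0=x_1x_2'-x_1'x_2$. The condition $\mathcal W_0(s)=0$ means that some nontrivial solution has a double zero at $s$. I would also introduce the Cauchy function $C(t,s)$, which solves $L_0$ in $t\ge s$ with $C(s,s)=0$ and $\partial_tC(s,s)=1$, where the delayed term only sees values $x(\tau(t))$ with $\tau(t)\ge s$. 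This gives the variation-of-constants formula
\[
x(t)=x(0)x_1(t)+x'(0)x_2(t)+\int_0^tC(t,s)(L_0x)(s)\dd s .
\]

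\emph{Part 2.} For (b)$\Rightarrow$(a) I take $v=z$. Then $L_0v=0$, and it remains to check $z'(\omega)\neq0$. If instead $z(\omega)=z'(\omega)=0$, then $z$ would be a nontrivial solution with a double zero. I would rule this out via a Rolle/convexity argument: $z''=p\,z(\tau)\ge0$ wherever the delayed argument stays in $[0,\omega)$, so $z$ is convex there, and a convex function that is decreasing and positive on $[0,\omega)$ cannot reach a double zero at $\omega$ unless it is constant. That contradicts $z(0)=1$.

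For (c)$\Rightarrow$(b), nonvanishing of $\mathcal W_0$ gives unique solvability of the two-point problem $z(0)=1$, $z(\omega)=0$. I would then argue by continuity in $\omega$, starting from small $\omega$, where $z$ is close to the linear function $1-t/\omega$. The properties $z>0$ and $z'<0$ on $[0,\omega)$ can only be lost at the first $\omega^*$ where a double zero appears, and that is exactly where $\mathcal W_0(\omega^*)=0$.

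\emph{Main obstacle.} The hard step is (a)$\Rightarrow$(c), a Sturm/de la Vallée Poussin comparison in the delayed setting. Suppose $\mathcal W_0(s)=0$ for some $s\in[0,\omega]$, take the first such $s$, and let $y$ be a nontrivial solution with $y(s)=y'(s)=0$.

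I would pass to derivatives. Put $u=-v'\ge0$, and write $v(t)=v(\omega)+\int_t^\omega u$. The inequality $L_0v\ge0$ then becomes a first-order inequality for $u$ whose right-hand side is monotone (a positive operator) in $u$. The same substitution turns $y$ into an equality solution.

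Next I would form $w=v-cy$ and choose $c$ maximal such that $w\ge0$ and $w'\le0$ on $[0,s]$. The positivity of the transformed operator should force $w$ to remain strictly positive and strictly decreasing up to $s$, unless $w$ vanishes identically. That would contradict the maximality of $c$. The condition $|v(\omega)|+|v'(\omega)|>0$ is what excludes the degenerate touching case at the endpoint.

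If this direct comparison resists, my fallback is an operator argument. I would rewrite the two-point problem as $z=g+Kz$ with $K$ a positive integral operator built from the Dirichlet Green's function for $x''$ and the substitution above. I would then use $v$ as a strict supersolution, $v\ge Kv+g$ in the relevant cone, to get spectral radius $\rho(K)<1$. That yields positivity of the Green's function and hence $\mathcal W_0\neq0$ on $[0,\omega]$.

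\emph{Part 1.} For (b)$\Rightarrow$(a), take $v=x$. For (a)$\Rightarrow$(c), apply Part~2 on every $[0,\omega]$. A positive nonincreasing $x$ with $L_0x\ge0$ satisfies $|x(\omega)|+|x'(\omega)|>0$ since $x(\omega)>0$; if $x'<0$ fails somewhere, I would perturb $x$ slightly, for instance to $x+\varepsilon(e^{-t}-\cdots)$, so that it is strictly decreasing. This gives $\mathcal W_0\ne0$ on each $[0,\omega]$.

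For (c)$\Rightarrow$(b), Part~2 gives solutions $z_\omega$ with $z_\omega(0)=1$ that are positive and decreasing on $[0,\omega)$. I would show that $z_\omega'(0)$ is monotone in $\omega$ and bounded, since it lies in $[-1/\omega,0]$-type bounds from convexity. Then $z_\omega$ converges locally uniformly to a positive nonincreasing solution of $L_0x=0$ on $[0,\infty)$.
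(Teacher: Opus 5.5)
The paper does not prove this proposition; it quotes it from \cite{ABBD,BDK,LabovskiThesis}. Your argument has a genuine gap at Part~2, (b)$\Rightarrow$(a).

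\textbf{The main gap.} Your claim that a convex function, positive and decreasing on $[0,\omega)$, cannot have a double zero at $\omega$ is false: $(\omega-t)^2$ is one. The equation does not rescue the step either. Delay equations lack backward uniqueness, and a nontrivial solution with a double zero is exactly what $\mathcal W_0=0$ means.

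Concretely, take $\tau\equiv0$ and $p\equiv2/\omega^2$ on $[0,\omega]$. Then $x_1=1+t^2/\omega^2$ and $x_2=t$, so $\mathcal W_0(t)=1-t^2/\omega^2$ vanishes at $\omega$. Yet $z(t)=(1-t/\omega)^2$ solves $L_0z=0$, $z(0)=1$, $z(\omega)=0$, with $z>0$ and $z'<0$ on $[0,\omega)$. Condition (a) fails too. Indeed, $v(0)=v(\omega)-\omega v'(\omega)+\int_0^\omega uv''(u)\dd u\ge v(\omega)-\omega v'(\omega)+v(0)$, and since $v(\omega)\ge0\ge v'(\omega)$ this forces $v(\omega)=v'(\omega)=0$.

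So (b), as printed, implies neither (a) nor (c), and no argument can close this step. What does hold:
\begin{itemize}
\item (b) is equivalent to $\mathcal W_0\ne0$ on $[0,\omega)$. For the forward direction, apply (a)$\Rightarrow$(c) on each $[0,s]$, $s<\omega$, with $v=z$.
\item (b) augmented by $z'(\omega)<0$ is equivalent to (a) and (c).
\end{itemize}
Your continuity argument for (c)$\Rightarrow$(b) actually delivers $z'(\omega)<0$, since $z_\omega'(\omega)=0$ would be a double zero. Part~1, which is what the paper uses, is unaffected.

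\textbf{The step (a)$\Rightarrow$(c).} This is the real content, and you only gesture at it. Imposing $w'\le0$ in your normalization is also awkward when $v'(s)=0$. The step works cleanly if you integrate backward from the double zero $s$. Define
$(Kf)(t)=\int_t^s(u-t)p(u)\mathbf 1_{\{\tau(u)\ge0\}}f(\tau(u))\dd u$, a positive operator on $C[0,s]$, and $g(t)=v(s)+(s-t)|v'(s)|$. Then:
\begin{itemize}
\item On $[0,s]$ you have $y=Ky$ and $v\ge g+Kv$.
\item $g\ge\kappa(s-t)$ for some $\kappa>0$. When $s=\omega$, this is exactly where $|v(\omega)|+|v'(\omega)|>0$ enters.
\item Make $y$ positive somewhere and take $c$ maximal with $w=v-cy\ge0$. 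Then $w\ge g+Kw\ge g$.
\item Since $|y(t)|\le\|y\|_\infty(s-t)\int_0^sp$, you get $w-\delta y\ge0$ for small $\delta>0$, contradicting maximality.
\end{itemize}
This uses only that $v$ is positive and nonincreasing. Hence your Part~1 perturbation $x+\varepsilon(e^{-t}-\cdots)$ is unnecessary. It is also unjustified, because $L_0e^{-t}=e^{-t}-p\mathbf 1_{\{\tau\ge0\}}e^{-\tau(t)}$ can be negative.

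\textbf{Smaller points.}
\begin{itemize}
\item Convexity gives the upper bound $z_\omega'(0)\le-1/\omega$, not $z_\omega'(0)\in[-1/\omega,0]$. The monotone increase of $z_\omega'(0)$ in $\omega$ follows from $z_{\omega_2}-z_{\omega_1}=(z_{\omega_2}'(0)-z_{\omega_1}'(0))x_2$ together with $x_2(t)\ge t$. Monotone and bounded above by $0$ then gives your limit.
\item The two-point problem is uniquely solvable because $x_2(\omega)\ge\omega>0$, not because $\mathcal W_0\ne0$.
\end{itemize}
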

Applying Proposition~\ref{prop:BDS17}, one obtains the following \(\liminf\) oscillation criteria.

\begin{proposition}[{\cite{BDK,Labovski1975,LabovskiThesis}}]
\label{prop:autonomous-liminf}
\leavevmode
\begin{enumerate}[label=\textup{(\arabic*)},leftmargin=1.45em,labelsep=.35em,itemsep=4pt,topsep=3pt]
\item Assume that $p(t)\ge0$, $t\in\mathbb R$, and that
\[
 \left(\operatorname*{ess\,sup}_{t\in\mathbb R}(t-\tau(t))\right)
 \sqrt{\operatorname*{ess\,sup}_{t\in\mathbb R}p(t)}
 \le\frac{2}{e}.
\]
Then \eqref{eq:positive-general} possesses a positive nonincreasing
solution on $[0,\infty)$.
\item For every $d>2/e$ there is a finite number $L(d)$ such that no
solution of
$
 y''(t)=y(t-d)
$
can remain positive and nonincreasing throughout an interval of length
$L(d)$ together with its preceding history of length $d$. Consequently, if
\begin{equation*}\liminf_{t\to\infty}(t-\tau(t))>2/e,\end{equation*}
 then
$x''(t)=x(\tau(t))$ has no positive nonincreasing solution.
\end{enumerate}
\end{proposition}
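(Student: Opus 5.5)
Both parts rest on the characteristic equation of the autonomous equation $y''(t)=y(t-d)$, namely $\lambda^2=e^{-\lambda d}$. I will look for decreasing exponentials $e^{-\mu t}$ with $\mu>0$, for which it becomes $\mu^2=e^{\mu d}$, i.e.\ $\mu e^{-\mu d/2}=1$. The function $\mu\mapsto\mu e^{-\mu d/2}$ attains its maximum $2/(ed)$ at $\mu=2/d$. So a positive root exists if and only if $2/(ed)\ge1$, i.e.\ $d\le 2/e$. This identifies the threshold in both parts.

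\emph{Part (1).} Let $D=\operatorname*{ess\,sup}(t-\tau(t))$ and $P=\operatorname*{ess\,sup}p$, so that $D\sqrt P\le 2/e$. Take $\mu=2/D$ and test $v(t)=e^{-\mu\sqrt P\,t}$, which is positive and decreasing. Then
\[
v''(t)-p(t)v(\tau(t))=v(t)\bigl(\mu^2P-p(t)e^{\mu\sqrt P\,(t-\tau(t))}\bigr)\ge v(t)P\bigl(\mu^2-e^{\mu\sqrt P D}\bigr).
\]
Since $\sqrt P D\le 2/e$, the exponent satisfies $\mu\sqrt PD\le 2\mu/(e\mu\cdot\tfrac{D}{2})\cdot\tfrac{D}{2}$; more simply, I choose $\mu$ as the root of $\mu^2=e^{\mu\sqrt PD}$, which exists by the computation above applied with $d=\sqrt PD\le2/e$. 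This makes the bracket nonnegative, so $L_0v\ge0$ for $t$ large, once the initial segment with $\tau(t)<0$ is handled. On that segment the term $\mathbf 1_{\{\tau\ge0\}}$ only drops a nonpositive contribution, so the inequality still holds. Proposition~\ref{prop:BDS17}(1), (a)$\Rightarrow$(b), then gives a positive nonincreasing solution of the equation on $[0,\infty)$. This part is routine.

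\emph{Part (2).} Fix $d>2/e$. Now the characteristic equation has no positive real root. I would show that the solution $z$ of the boundary-value problem in Proposition~\ref{prop:BDS17}(2)(b), posed on long intervals, must fail, i.e.\ the Wronskian $\mathcal W_0$ vanishes at some finite $\omega=L(d)$. Equivalently, no positive decreasing $v$ with $L_0v\ge0$ survives on $[0,L]$.

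The argument goes through the ratio $u=-v'/v>0$, which satisfies the Riccati-type inequality
\[
u'\le u^2-\exp\Bigl(\int_{t-d}^t u\Bigr).
\]
Set $m=\inf u$ over a window. Then $\exp(\int u)\ge e^{md}$, and $e^{md}-m^2\ge c(d)>0$ uniformly in $m$ because $d>2/e$. So $u'\le -c(d)+(u^2-m^2)$. Iterating over windows of length $d$, the infimum is pushed down by a fixed amount each time, and after finitely many windows $u$ would have to become negative, contradicting that $v$ is decreasing. The window count gives $L(d)$.

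\emph{Main obstacle.} Controlling the term $u^2-m^2$ is the step I expect to be hardest, since $u$ could oscillate wildly. The first thing I would try is a comparison instead: use Proposition~\ref{prop:BDS17}(2) to reduce to the extremal problem for the autonomous equation on $[0,\omega]$. Its solution is a combination of the complex characteristic modes, which must change sign within a bounded time, and this gives the bound $L(d)$ directly.

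\emph{Consequence.} Suppose $\liminf(t-\tau(t))>d>2/e$ and $x$ is positive nonincreasing. Then eventually $x(\tau(t))\ge x(t-d)$, so $x''\ge x(t-d)$ on $[T,T+L(d)]$. Part (1) of Proposition~\ref{prop:BDS17}, applied in the shifted form, contradicts the nonexistence shown above.
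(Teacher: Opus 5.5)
The paper does not prove this proposition; it quotes it from \cite{BDK,Labovski1975,LabovskiThesis}, so your argument has to stand on its own. Part~(1) does. With $v(t)=e^{-2t/D}$ one gets $v''-p\,v(\tau)\ge v\,(4/D^2-e^2P)\ge0$, dropping the terms with $\tau(t)<0$ only helps, and Proposition~\ref{prop:BDS17}(1) finishes. (Your line with $\mu=2/D$ and $v=e^{-\mu\sqrt P\,t}$ has the wrong scaling, but choosing $\mu$ as a root repairs it.) The final ``consequently'' step is also fine once a finite $L(d)$ exists.

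The gap is that nothing in the proposal actually produces a finite $L(d)$. In the Riccati scheme, $u'\le -c(d)+(u^2-m^2)$ forces a decrease only when $u^2-m^2<c(d)$, i.e.\ when $u$ is near its window infimum. When $u$ sits well above $m$, the inequality lets $u$ stay large or grow, so ``the infimum is pushed down by a fixed amount each window'' does not follow. This is not a technicality. An excursion can fail either by $y'$ reaching $0$ ($u$ turning negative) or by $y$ reaching $0$ ($u\to+\infty$), and your iteration ignores the second mode; it also has no a priori upper bound on $u$. The fallback is not an argument either. The solution of $L_0z=0$ on $[0,\omega]$ is generated by the method of steps and is not a finite combination of characteristic modes. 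Moreover, $\lambda^2=e^{-\lambda d}$ has the positive real root $s$ for every $d$, so ``modes must change sign'' cannot by itself be the obstruction. The quantitative content is exactly what is missing.

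One way to close the gap is to go through the Wronskian instead of a finite window. Suppose $y$ stays positive and nonincreasing on $[T,T+L]$ after a positive history of length $d$. Then $v(t)=y(T+t)$ satisfies $v''\ge\mathbf 1_{\{t\ge d\}}v(t-d)$, with $v'<0$ on $[0,L)$ and $v(L)>0$, so $\mathcal W_0\ne0$ on $[0,L]$ by Proposition~\ref{prop:BDS17}(2). Hence it suffices that $\mathcal W_0$ has a zero $t_*$, and then $L(d)=t_*$ works. By part~(1) of that proposition, this means $y''=\mathbf 1_{\{t\ge d\}}y(t-d)$ has no positive nonincreasing solution on $[0,\infty)$.

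On the half-line one has $y,y'\to0$, and the argument at the end of Lemma~\ref{lem:PSS-two-sided-decay} works for every $d$. First, the bootstrapped bound $y(t-d)\le(24/d^4)\,y(t)$ from the remark following that lemma, together with convexity, gives $\ell:=\liminf(-y'/y)<\infty$. Next, $y''\ge e^{d(\ell-\delta)}y$ eventually. Setting $F=y'+\mu_\delta y$ with $\mu_\delta=e^{d(\ell-\delta)/2}$, one has $F'\ge\mu_\delta F$ and $F\to0$, which gives $\ell\ge e^{d\ell/2}$. This is impossible for $d>2/e$, since $\ell e^{-d\ell/2}\le 2/(ed)<1$.

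This argument raises a lower bound for $u=-y'/y$ against an upper bound, the opposite direction from your scheme. It also relies on the terminal condition $F\to0$, which is precisely what a finite window lacks.
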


For the normalized equation
\begin{equation}\label{eq:positive-normalized}
x''(t)=x(\tau(t)),\end{equation}
write
\begin{equation*}%
a=\liminf_{t\to\infty}(t-\tau(t)),\qquad
b=\limsup_{t\to\infty}(t-\tau(t)).\end{equation*}
\Needspace{10\baselineskip}
Inspired by \cite{PSS2023}, we prove:

\begin{theorem}
\label{thm:positive-main} Assume that $\tau$ is nondecreasing, $\tau(t)\le t
$, and $\tau(t)\overset{t\to\infty}{\to}\infty$. There is a continuous strictly decreasing function
\begin{equation*}\beta:[0,2/e]\longrightarrow[2/e,\sqrt2],\qquad
\beta(0)=\sqrt2,\qquad\beta(2/e)=2/e, \end{equation*}
characterized in Lemmas~\ref{lem:positive-short}--\ref{lem:positive-boundary},
such that, if
\eqref{eq:positive-normalized} has an eventually positive decreasing solution, then
\begin{equation*}a\leq2/e,\qquad b\le\beta(a).\end{equation*}
The bound $b\le\beta(a)$ is sharp by Theorem~\ref{thm:positive-sharpness}.
\end{theorem}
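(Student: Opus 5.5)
The plan is to combine the two classical mechanisms: the $\liminf$ obstruction of Proposition~\ref{prop:autonomous-liminf}(2), and a refined $\limsup$ estimate in the spirit of Proposition~\ref{prop:BDS16} that exploits the lower bound on the delay supplied by $a$. Let $x$ be an eventually positive decreasing solution. Since $\int^\infty p=\infty$ holds with $p\equiv1$, we have $x\to0$, $x'<0$, $x'\to0$ and $x''=x(\tau)>0$; in particular $x$ is convex. The first claim $a\le2/e$ follows at once from Proposition~\ref{prop:autonomous-liminf}(2). If $a>2/e$, then for every $d\in(2/e,a)$ we eventually have $\tau(t)\le t-d$, so $x$ is a positive nonincreasing solution of $x''(t)\ge x(t-d)$ on intervals of arbitrary length. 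By the comparison statement Proposition~\ref{prop:BDS17}(2), this forces $y''=y(t-d)$ to admit a positive decreasing solution on an interval of length $L(d)$ (after a shift of the origin), which is a contradiction.

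For the bound $b\le\beta(a)$, fix $\varepsilon>0$. Eventually $\tau(t)\le t-(a-\varepsilon)$, and there are arbitrarily large $t_0$ with $t_0-\tau(t_0)\ge b-\varepsilon$. Set $s_0=\tau(t_0)$. Integrating $x''=x(\tau)$ twice from $s\in[s_0,t_0]$ to $\infty$, and using $x'(\infty)=0$ and monotonicity of $\tau$, gives
\[
x(s_0)\ \ge\ \int_{s_0}^{t_0}(u-s_0)\,x(\tau(u))\dd u+\text{(tail)} .
\]
Since $\tau(u)\le s_0$ for $u\le t_0$, this yields Proposition~\ref{prop:BDS16}'s threshold $(b^2/2\le1$, i.e.\ $b\le\sqrt2)$ when $a=0$. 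For $a>0$ the improvement comes from the fact that $x(\tau(u))$ is strictly larger than $x(s_0)$ on part of the window: $\tau(u)\le u-a$, and $x$ decays at most like a bounded exponential rate controlled by $a$. Concretely, I would prove a lower bound of Harnack type, $x(s-h)\ge \lambda(h)\,x(s)$, where $\lambda$ is generated by the extremal solution of $x''(t)=x(t-a)$, i.e.\ $e^{-\mu h}$ with $\mu$ the smallest positive root of $\mu^2=e^{-\mu a}$. This root exists exactly when $a\le2/e$, which explains why $\beta(2/e)=2/e$. To sharpen this, I would combine the bound with the convexity estimate $x(\tau(u))\ge x(s_0)+|x'(s_0)|(s_0-\tau(u))$ and with the analogous inequality for $x'$. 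This produces a two-dimensional linear system of inequalities in $x(s_0)$ and $x'(s_0)$, and its solvability defines $\beta(a)$ as the largest $b$ for which the system is consistent. I take this to be what Lemmas~\ref{lem:positive-short}--\ref{lem:positive-boundary} characterize: the boundary-value problem $z''=z(\tau^*(t))$ with the extremal delay profile $\tau^*(u)=\min(s_0,u-a)$ on the window, together with $z>0$, $z'<0$, which Proposition~\ref{prop:BDS17}(2) reduces to nonvanishing of a Wronskian.

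The main obstacle is identifying the extremal configuration and proving that it is extremal, namely that among all nondecreasing $\tau$ with $a\le t-\tau\le b$, the worst case for positivity is the step-then-shift profile above. I would handle this by monotone comparison. Since $x$ is decreasing, replacing $\tau$ by a pointwise larger argument only decreases $x(\tau(u))$, so it weakens $L_0x\ge0$ toward the extremal case. Proposition~\ref{prop:BDS17} then transfers the existence of a positive decreasing solution to the extremal equation, whose explicit solution is piecewise exponential and hyperbolic. That solution produces the Wronskian condition whose boundary is $b=\beta(a)$. Continuity and strict monotonicity of $\beta$ come from the implicit function theorem applied to that condition, and letting $\varepsilon\to0$ finishes the argument.
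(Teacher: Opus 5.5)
Your reduction to the extremal configuration is right. After normalizing at $\tau(t_0)$, monotonicity of $x$ and $\tau$ gives $x''\ge x(\tau^*)$, with $\tau^*(u)=u-a$ off $[a,b]$ and $\tau^*=0$ on $[a,b]$. This is exactly the comparison problem \eqref{eq:positive-comparison}. Your treatment of $a>2/e$, of $a=0$, and of $\varepsilon\to0$ (via continuity of $\beta$) also matches the paper.

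\textbf{The gap.} You assert without proof that the extremal problem ``produces the Wronskian condition whose boundary is $b=\beta(a)$.'' That step is the whole content of the theorem, and it does not go through as described.
\begin{itemize}
\item The comparison holds only on a half-line $[-M,\infty)$ in shifted time, since $t-\tau(t)\ge a-\varepsilon$ holds only eventually. So Proposition~\ref{prop:BDS17} applies only to the cut-off operator with $\mathbf 1_{\{\tau^*(t)\ge -M\}}$.
\item Solutions of that cut-off problem come from the method of steps with zero history. They are not piecewise exponential, and their Wronskian threshold depends on $M$.
\item The explicit two-mode solutions exist only for the two-sided global problem. The plateau forcing is the nonlocal value $z(0)$, so the threshold is the rank-one condition $\mathcal E(a,b)=\Phi_{a,b}(0)=1$ of Definition~\ref{def:positive-profile} and Proposition~\ref{prop:positive-crossing}. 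It is not a Wronskian you can read off.
\end{itemize}
You would still have to show that $b>\beta(a)$ destroys positivity for some finite $M$.

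The paper does this directly. Take $a_0<a$ and $\beta(a_0)<b_0<b$, so that $\mathcal E(a_0,b_0)>1$. Then a finite partial sum of the positive Neumann series $\sum_j(\mathsf T\mathbf 1_{[a_0,b_0]^c}\mathsf S_{a_0})^j\mathsf T\mathbf 1_{[a_0,b_0]}$ already exceeds $1$ at $0$. Normalizing at $\tau(T)$ for $T$ late enough, the integral inequality from \eqref{eq:positive-final-integral} holds on $[-Na_0,\infty)$. Applying it $N$ times gives $1=z(0)\ge v_N(0)>1$. No Wronskian and no limit $M\to\infty$ are needed.

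\textbf{The ``Harnack plus convexity'' route does not repair this.}
\begin{itemize}
\item The equation $\mu^2=e^{-\mu a}$ has a positive root (the growth rate $s$) for every $a>0$. The roots that exist exactly when $a\le2/e$ are those of $q^2=e^{aq}$ (Lemma~\ref{lem:PSS-rR-properties}).
\item The bound $x(t-h)\ge e^{-\mu h}x(t)$ is weaker than plain monotonicity.
\item Even the correct asymptotic bound $x(t-h)\gtrsim e^{rh}x(t)$, combined with convexity, gives some bound but not the sharp curve.
\end{itemize}
The paper's genuinely two-dimensional reduction (Section~\ref{sec:spectral-positive}) needs three ingredients:
\begin{itemize}
\item the sharper two-coordinate inequality $x(t-a)\ge rs\,x(t)+(s-r)x'(t)$ of Lemma~\ref{lem:PSS-eventual-ODE}, whose proof uses $R(a)>R(d)$;
\item the outgoing spectral signs $J_s\le0$ and $\mathcal C_a\ge0$ at the end of the window;
\item the identity \eqref{eq:PSS-positive-spectral-identity}.
\end{itemize}

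Finally, continuity and strict monotonicity of $\beta$ do not follow from the implicit function theorem without a nondegeneracy condition you have not checked. The paper obtains them from comparison arguments and joint continuity of the Neumann series, in Proposition~\ref{prop:positive-crossing} and Lemma~\ref{lem:positive-boundary}.
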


\newcommand{\betacurvecoordinates}{%
(0,1.4142135623731)
(0.0063708248285583,1.4142134760815)
(0.012741649657117,1.4142128711961)
(0.019112474485675,1.4142112266495)
(0.025483299314233,1.4142080183656)
(0.031854124142791,1.4142027189133)
(0.03822494897135,1.4141947971571)
(0.044595773799908,1.4141837179003)
(0.050966598628466,1.4141689415229)
(0.057337423457024,1.4141499236122)
(0.063708248285583,1.4141261145861)
(0.070079073114141,1.4140969593068)
(0.076449897942699,1.4140618966867)
(0.082820722771257,1.4140203592832)
(0.089191547599816,1.4139717728831)
(0.095562372428374,1.4139155560744)
(0.10193319725693,1.4138511198068)
(0.10830402208549,1.4137778669367)
(0.11467484691405,1.4136951917587)
(0.12104567174261,1.413602479521)
(0.12741649657117,1.4134991059232)
(0.13378732139972,1.4133844365964)
(0.14015814622828,1.413257826564)
(0.14652897105684,1.4131186196805)
(0.1528997958854,1.4129661480495)
(0.15927062071396,1.4127997314162)
(0.16564144554251,1.4126186765356)
(0.17201227037107,1.4124222765125)
(0.17838309519963,1.4122098101129)
(0.18475392002819,1.4119805410435)
(0.19112474485675,1.4117337171983)
(0.19749556968531,1.4114685698689)
(0.20386639451386,1.4111843129174)
(0.21023721934242,1.4108801419074)
(0.21660804417098,1.4105552331914)
(0.22297886899954,1.4102087429519)
(0.2293496938281,1.4098398061912)
(0.23572051865666,1.4094475356675)
(0.24209134348521,1.4090310207737)
(0.24846216831377,1.408589326353)
(0.25483299314233,1.4081214914485)
(0.26120381797089,1.4076265279803)
(0.26757464279945,1.407103419346)
(0.27394546762801,1.406551118937)
(0.28031629245656,1.4059685485662)
(0.28668711728512,1.4053545967983)
(0.29305794211368,1.4047081171753)
(0.29942876694224,1.4040279263298)
(0.3057995917708,1.403312801976)
(0.31217041659935,1.4025614807681)
(0.31854124142791,1.4017726560152)
(0.32491206625647,1.4009449752417)
(0.33128289108503,1.4000770375773)
(0.33765371591359,1.3991673909642)
(0.34402454074215,1.398214529163)
(0.3503953655707,1.3972168885414)
(0.35676619039926,1.3961728446236)
(0.36313701522782,1.395080708379)
(0.36950784005638,1.3939387222258)
(0.37587866488494,1.3927450557214)
(0.3822494897135,1.3914978009084)
(0.38862031454205,1.3901949672827)
(0.39499113937061,1.3888344763447)
(0.40136196419917,1.3874141556904)
(0.40773278902773,1.3859317325936)
(0.41410361385629,1.3843848270245)
(0.42047443868485,1.3827709440427)
(0.4268452635134,1.3810874654947)
(0.43321608834196,1.3793316409361)
(0.43958691317052,1.3775005776875)
(0.44595773799908,1.3755912299221)
(0.45232856282764,1.3736003866651)
(0.45869938765619,1.3715246585702)
(0.46507021248475,1.3693604633176)
(0.47144103731331,1.3671040094524)
(0.47781186214187,1.3647512784556)
(0.48418268697043,1.3622980048049)
(0.49055351179899,1.3597396537437)
(0.49692433662754,1.3570713964261)
(0.5032951614561,1.3542880820516)
(0.50966598628466,1.351384206529)
(0.51603681111322,1.3483538771286)
(0.52240763594178,1.3451907724748)
(0.52877846077034,1.3418880971077)
(0.53514928559889,1.3384385296831)
(0.54152011042745,1.3348341636903)
(0.54789093525601,1.3310664393203)
(0.55426176008457,1.3271260648195)
(0.56063258491313,1.3230029252711)
(0.56700340974169,1.3186859762641)
(0.57337423457024,1.3141631192696)
(0.5797450593988,1.3094210547307)
(0.58611588422736,1.3044451077936)
(0.59248670905592,1.2992190201885)
(0.59885753388448,1.2937246998644)
(0.60522835871303,1.2879419174056)
(0.61159918354159,1.2818479347258)
(0.61797000837015,1.2754170466172)
(0.62434083319871,1.2686200087889)
(0.63071165802727,1.2614233160545)
(0.64487403131652,1.2437877735617)
(0.65712704604355,1.2263446153054)
(0.66772812023038,1.2091919996052)
(0.6768999670304,1.1924029174842)
(0.68483527360786,1.1760304638214)
(0.69170074921504,1.1601119829234)
(0.6976406275112,1.1446723530911)
(0.70277969670202,1.1297266044887)
(0.70722592115835,1.1152820152671)
(0.71107270959065,1.1013397956348)
(0.71440087743019,1.0878964439648)
(0.7172803446437,1.0749448401533)
(0.71977160465024,1.0624751273358)
(0.72192699519994,1.0504754223744)
(0.72379179791405,1.0389323873339)
(0.72540518958593,1.02783168781)
(0.72680106522854,1.0171583589923)
(0.72800875015936,1.0068970964136)
(0.72905361608266,0.99703248519911)
(0.72995761411211,0.98754917911521)
(0.7307397359318,0.97843203868382)
(0.73141641278386,0.96966623597742)
(0.73200186066497,0.9612373323634)
(0.7325083789838,0.95313133436114)
(0.7329466089537,0.94533473186784)
(0.73332575714908,0.93783452226018)
(0.7336537889222,0.93061822325977)
(0.73393759574363,0.92367387693791)
(0.73418313998198,0.9169900468102)
(0.73439558016465,0.91055580961865)
(0.73457937935097,0.90436074310657)
(0.73473839889457,0.89839491084792)
(0.73487597956483,0.89264884499146)
(0.73499501173158,0.88711352761265)
(0.73509799608754,0.88178037122769)
(0.73518709618429,0.87664119890956)
(0.7352641838853,0.87168822435159)
(0.73533087869107,0.86691403214607)
(0.73538858176248,0.86231155848183)
(0.73543850535716,0.85787407241277)
(0.73548169829723,0.85359515780637)
(0.73551906800357,0.84946869604769)
(0.73555139955936,0.84548884954629)
(0.73557937220357,0.84165004607171)
(0.73560357360079,0.83794696392601)
(0.73562451218717,0.83437451794808)
(0.73564262785198,0.83092784633435)
(0.73565830117897,0.82760229825244)
(0.73567186144197,0.8243934222193)
(0.73568359352238,0.8212969552105)
(0.73569374389416,0.81830881246506)
(0.73570252580184,0.81542507794966)
(0.73571012374055,0.81264199544399)
(0.73571669733192,0.80995596020959)
(0.73572238467755,0.80736351120659)
(0.73572730526031,0.80486132382019)
(0.73573156245448,0.80244620306437)
(0.73573524569752,0.80011507722905)
(0.73573843236901,0.79786499193837)
(0.73574118941629,0.79569310459153)
(0.7357435747609,0.79359667915685)
(0.73574563851548,0.7915730812949)
(0.73574742403651,0.78961977378326)
(0.73574896883517,0.78773431222227)
(0.73575030536546,0.78591434100003)
(0.73575146170592,0.78415758949657)
(0.73575246214959,0.78246186850897)
(0.73575332771434,0.78082506688174)
(0.73575407658442,0.77924514832519)
(0.7357547244925,0.77772014841002)
(0.73575528505018,0.77624817172196)
(0.73575577003398,0.77482738916802)
(0.7357561896327,0.77345603542089)
(0.73575655266149,0.77213240649154)
(0.73575686674706,0.77085485742111)
(0.7357571384879,0.76962180008739)
(0.73575737359288,0.76843170111033)
(0.73575757700121,0.76728307985784)
(0.73575775298621,0.76617450653619)
(0.73575790524507,0.76510460037265)
(0.73575803697653,0.7640720278685)
(0.73575815094807,0.76307550113553)
(0.73575824955407,0.76211377628761)
(0.73575833486609,0.76118565191438)
(0.73575840867643,0.76028996760104)
(0.73575847253572,0.75942560251908)
(0.73575852778555,0.75859147406396)
(0.73575857558666,0.75778653654585)
(0.73575861694326,0.75700977992175)
(0.7357586527242,0.75626022859974)
(0.73575868368119,0.7555369402589)
(0.73575871046458,0.75483900472399)
(0.73575873363706,0.75416554287019)
(0.73575875368545,0.7535157056053)
(0.73575877103093,0.75288867282148)
(0.73575878603791,0.7522836524353)
(0.73575879902166,0.75169987945753)
(0.73575881025495,0.75113661506574)
(0.73575881997378,0.75059314573176)
(0.73575882838233,0.75006878236102)
(0.73575883565724,0.74956285947795)
(0.73575884195136,0.74907473447174)
(0.73575884739691,0.74860378673942)
(0.7357588521083,0.74814941702424)
(0.7357588561845,0.74771104669153)
(0.73575885971115,0.74728811698198)
(0.73575886276234,0.74688008840095)
(0.73575886540217,0.74648644002353)
(0.7357588676861,0.74610666887464)
(0.73575886966212,0.74574028941746)
(0.73575887137173,0.74538683276996)
(0.73575887285085,0.74504584639972)
(0.73575887413056,0.74471689322264)
(0.73575887523773,0.7443995514842)
(0.73575887619564,0.7440934139531)
(0.73575887702441,0.74379808744432)
(0.73575887774144,0.74351319264894)
(0.7357588783618,0.74323836314275)
(0.73575887889853,0.74297324532362)
(0.73575887936289,0.74271749799986)
(0.73575887976465,0.74247079169389)
(0.73575888011225,0.74223280857166)
(0.73575888041298,0.7420032416597)
(0.73575888067317,0.74178179459966)
(0.73575888089828,0.74156818186829)
(0.73575888109304,0.7413621275872)
(0.73575888126154,0.74116336538226)
(0.73575888140733,0.74097163810875)
(0.73575888153346,0.74078669801688)
(0.73575888164258,0.74060830586876)
(0.735758881737,0.74043623059572)
(0.73575888181868,0.74027024952128)
(0.73575888188936,0.74011014771907)
(0.7357588819505,0.73995571703464)
(0.7357588820034,0.73980675820521)
(0.73575888204917,0.73966307686275)
(0.73575888208877,0.73952448804283)
(0.73575888212303,0.7393908103937)
(0.73575888215267,0.73926187183646)
(0.73575888217831,0.73913750302956)
(0.7357588822005,0.73901754477525)
(0.7357588822197,0.73890183845722)
(0.73575888223631,0.7387902362889)
(0.73575888225067,0.73868259185271)
(0.73575888226311,0.7385787640182)
(0.73575888227386,0.73847861969155)
(0.73575888228317,0.73838202816665)
(0.73575888229122,0.73828886306824)
(0.73575888229818,0.73819900267484)
(0.73575888230421,0.73811233279836)
(0.73575888230942,0.738028736094)
(0.73575888231394,0.73794810586222)
(0.73575888231784,0.73787033922758)
(0.73575888232122,0.73779533118492)
(0.73575888232414,0.73772298433786)
(0.73575888232666,0.73765320417843)
(0.73575888232885,0.73758590746212)
(0.73575888233074,0.73752099199458)
(0.73575888233238,0.7374583868265)
(0.73575888234288,0.73575888234288)
}

\begin{figure}[htbp]
\centering
\begin{tikzpicture}
\begin{axis}[
 width=0.90\textwidth,height=0.72\textwidth,
 xlabel={$a$},ylabel={$b$},xmin=0,xmax=1,ymin=0,ymax=1.65,
 xtick={0,0.4,0.735758882343,1},xticklabels={$0$,$0.4$,$2/e$,$1$},
 ytick={0,0.735758882343,1,1.414213562373},
 yticklabels={$0$,$2/e$,$1$,$\sqrt2$},
 tick label style={font=\small},label style={font=\small},
 axis on top,clip=true]
\addplot[draw=none,fill=gray!38] coordinates {(0,0)(1,0)(1,1)};
\addplot[draw=none,fill=graphforest!60] coordinates
 {(0,0)(0.735758882343,0.735758882343)(0,0.735758882343)};
\addplot[draw=none,fill=graphazure!65] coordinates {
(0,0.735758882343)
\betacurvecoordinates

};
\addplot[draw=none,fill=graphscarlet!60] coordinates {
\betacurvecoordinates
(1,1)
(1,1.65)
(0,1.65)
};
\addplot[black,very thick,no marks] coordinates {
\betacurvecoordinates

};
\addplot[black,thin,domain=0:1,samples=2] {x};
\addplot[black,thick,dash pattern=on 4pt off 2pt] coordinates {(0,0.735758882343)(0.735758882343,0.735758882343)};
\node[font=\small,anchor=south] at (axis cs:0.33,1.39) {$b=\beta(a)$};
\node at (axis cs:0.18,0.43) {I};
\node at (axis cs:0.27,1.04) {II};
\node at (axis cs:0.87,1.28) {III};
\node[font=\small] at (axis cs:0.73,0.27) {$b<a$};
\addplot[only marks,mark=*,mark size=1.8pt] coordinates
 {(0,1.414213562373)(0.735758882343,0.735758882343)};
\end{axis}
\end{tikzpicture}
\caption{The delay-data regions for positive feedback.
In region I, we have nonoscillation. In region II, either nonoscillation or
oscillation is possible. In region III, every bounded solution oscillates.}
\label{fig:beta}
\end{figure}

Lemmas~\ref{lem:positive-short}--\ref{lem:positive-boundary} determine $\beta$ explicitly in one parameter regime and by a scalar implicit
equation in the other. Thus the same criterion recovers the classical \(\limsup\)
bound of Proposition~\ref{prop:BDS16} at $a=0$ and the exact autonomous
threshold associated with Proposition~\ref{prop:autonomous-liminf} at $a=2/e$. The resulting
delay-data regions are shown in Figure~\ref{fig:beta}.

This paper is organized as follows. Section~\ref{subsec:positive-delay}
develops the extremal-profile method. It begins in Subsection~\ref{subsec:infinite-rank-feedback}
with a well-known fact about the spectral radius of positive operators, which gives an abstract interpretation of \(\liminf\)--\(\limsup\) dynamics. Next, we construct the extremal profile $\Phi_{a,b}$ (Subsection~\ref{subsec:positive-extremal-profile}), determine the critical curve $b=\beta(a)$ (Subsection~\ref{subsec:positive-critical-curve}), and prove that $b>\beta(a)$ is a sharp oscillation criterion (Subsection~\ref{subsec:positive-proof-sharpness}). The positive-operator and extremal-profile techniques used in this section are broad and can be applied to various similar systems.

Section~\ref{sec:spectral-positive} develops a complementary spectral approach that exploits the structure of our delay equation. Its starting point is that the positive decaying modes of the autonomous second-order equation can be encoded by a first-order delay equation with negative feedback, as explained in Remark~\ref{rem:spectral-first-order-reduction}. This reduction leads to precise spectral coordinates and asymptotics for positive decaying solutions (Subsection~\ref{subsec:spectral-description}). We establish spectral comparison principles between positive supersolutions and solutions of the corresponding equation (Subsection~\ref{subsec:spectral-comparison}) and use them to give a second proof of the oscillation bound in Subsection~\ref{subsec:spectral-critical-profile}. A robust return construction gives a second proof of sharpness in Subsection~\ref{subsec:spectral-excursions-sharpness}. 
\FloatBarrier
\section{The sharp \texorpdfstring{$\liminf$--$\limsup$}{liminf--limsup} boundary via positive operators}
\label{subsec:positive-delay}

\subsection{A lemma for positive operators}
\label{subsec:infinite-rank-feedback}

Let us begin with some abstract nonsense.

\begin{definition}
Let $X$ be a real ordered linear space with positive cone $X_+$. For
$e\in X_+\setminus\{0\}$, set
\[
 X_e^+:=\{x\in X_+:x\le ce\text{ for some }c\ge0\}.
\]
Let $T:X_e^+\to X_e^+$ be order-preserving and positively homogeneous,

\[
 x\le y\implies Tx\le Ty,
 \qquad T(cx)=cTx\quad(c\ge0).
\]
Define
\begin{equation*}
 \alpha_n(T;e):=\inf\{c\ge0:T^ne\le ce\},\qquad n\ge1,
\end{equation*}
and
\begin{equation}
 r(T;e):=\lim_{n\to\infty}\alpha_n(T;e)^{1/n}
       =\inf_{n\ge1}\alpha_n(T;e)^{1/n}.
\end{equation}
We call $r(T;e)$ the spectral radius of $T$ at $e$.
\end{definition}

The following lemma is well-known in positive-operator theory
\cite{AkianGaubertNussbaum2014,KantorovichAkilov1982,Krasnoselskii1964,KrasnoselskiiLifshitsSobolev1989,KrasnoselskiiEtAl1972,Schroder1980,Stetsenko1968}.
Standard formulations concern Banach spaces or finite-dimensional spaces.
We include a short proof for general ordered linear spaces.

\begin{lemma}

Let $X$ be a real ordered linear space, and let $A,B:X\to X$ be positive
linear operators. Suppose that $u\in X_+\setminus\{0\}$ satisfies
\begin{equation}\label{eq:infinite-rank-positive-super}
 u\ge Au+Bu.
\end{equation}
Assume that
\[
 \mathcal T x:=\sup_{N\ge0}\sum_{j=0}^{N}A^jBx
\]
exists in $X$ for every $x\in X_u^+$. Then
$\mathcal T:X_u^+\to X_u^+$ satisfies
\[
 \mathcal T u\le u,
\]
and
\begin{equation}\label{eq:infinite-rank-necessary}
 r(\mathcal T;u)\le1.
\end{equation}
\end{lemma}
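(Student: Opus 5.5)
We first show that $\mathcal T u\le u$ by iterating the supersolution inequality \eqref{eq:infinite-rank-positive-super}. Since $A$ is positive and linear, applying $A^j$ to $u\ge Au+Bu$ gives $A^ju\ge A^{j+1}u+A^jBu$. Summing over $j=0,\dots,N$ and telescoping, we get
\[
 u\ \ge\ A^{N+1}u+\sum_{j=0}^{N}A^jBu\ \ge\ \sum_{j=0}^{N}A^jBu,
\]
where the last step uses $A^{N+1}u\ge0$. So $u$ is an upper bound for the partial sums, and because $\mathcal Tu$ is their supremum, $\mathcal Tu\le u$.

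Next we check that $\mathcal T$ maps $X_u^+$ into itself and is order-preserving and positively homogeneous. Take $x\in X_u^+$ with $x\le cu$. By positivity of $A^jB$, each partial sum for $x$ is at most $c$ times the corresponding partial sum for $u$, which is at most $cu$. Hence $0\le\mathcal Tx\le cu$, and $\mathcal Tx\in X_u^+$. The supremum exists by hypothesis. Monotonicity follows in the same way from positivity of the partial sums. Positive homogeneity follows from linearity of the partial sums together with $\sup(cS)=c\sup S$ for $c>0$; the case $c=0$ is trivial. Thus $\mathcal T$ is a map of the kind covered by the Definition.

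Finally we bound the spectral radius. By monotonicity and $\mathcal Tu\le u$, induction gives $\mathcal T^nu\le\mathcal T^{n-1}u\le\dots\le u$, so $\alpha_n(\mathcal T;u)\le1$ for all $n$. Therefore $r(\mathcal T;u)=\inf_n\alpha_n^{1/n}\le1$.

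The one point needing care is that the Definition asserts the limit formula for $r$; this relies on submultiplicativity $\alpha_{n+m}\le\alpha_n\alpha_m$. That inequality follows from $\mathcal T^{n+m}u\le\mathcal T^n(\alpha_mu)=\alpha_m\mathcal T^nu\le\alpha_m\alpha_nu$, with Fekete's lemma giving the limit. Here the infimum $\alpha_n$ must actually be attained; if the order is not Archimedean, one uses $\alpha_n+\varepsilon$ instead. For our inequality only the $\inf$ form is needed, so this subtlety does no harm. We expect the main obstacle to be precisely this bookkeeping in a general ordered space without topology: every step must use only order relations and suprema that exist by hypothesis, never limits.
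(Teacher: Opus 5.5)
Your proof is correct and follows the paper's argument step for step: telescoping the iterated inequality to $u\ge A^{N+1}u+\sum_{j=0}^{N}A^jBu$, taking the supremum to get $\mathcal Tu\le u$, using positivity to show $0\le\mathcal Tx\le cu$ for $x\le cu$, and deducing $\mathcal T^nu\le u$, hence $\alpha_n(\mathcal T;u)\le1$. Your extra checks go beyond the paper's proof but are sound. These are the verification of monotonicity and positive homogeneity, and the remark that submultiplicativity of $\alpha_n$ (with the $\alpha_n+\varepsilon$ device when the order is non-Archimedean) justifies the limit formula; the stated bound needs only the infimum form.
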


\begin{proof}
Iterating \eqref{eq:infinite-rank-positive-super} and using positivity gives,
for every $N\ge0$,
\[
 u\ge A^{N+1}u+\sum_{j=0}^{N}A^jBu
   \ge\sum_{j=0}^{N}A^jBu.
\]
Taking the supremum yields $\mathcal T u\le u$. If $0\le x\le cu$,
then
\[
 0\le\mathcal T x\le c\mathcal T u\le cu.
\]
Thus $\mathcal T$ maps $X_u^+$ into itself. Finally, $\mathcal T u\le u$
and monotonicity give $\mathcal T^n u\le u$ for every $n\ge1$. Hence
$\alpha_n(\mathcal T;u)\le1$ and \eqref{eq:infinite-rank-necessary}.
\end{proof}

\begin{corollary}
\label{cor:rank-one-feedback}
Let $X$ be a real ordered linear space, let $A:X\to X$ be a positive
linear operator, let $\mathcal E:X\to\mathbb R$ be a positive linear
functional, and let $g\in X_+$. Assume that the following supremum exists
in $X$
\[
 \Phi:=\sup_{N\ge0}\sum_{j=0}^{N}A^jg.
\]
If $u\in X_+$ satisfies
\[
 u\ge Au+\mathcal E(u)g,
\]
then
\[
 u\ge \mathcal E(u)\Phi.
\]
Hence $\mathcal E(u)>0$ implies $\mathcal E(\Phi)\le1$.
Conversely, assume in addition that $\Phi=A\Phi+g$.
If $0<\mathcal E(\Phi)\le1$, then $z=\Phi/\mathcal E(\Phi)$ satisfies $\mathcal E(z)=1$ and $z\ge Az+g$.
\end{corollary}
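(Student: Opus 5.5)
The corollary has two directions, and I will handle each by imitating the proof of the preceding lemma, specialised to $B=g\,\mathcal E(\cdot)$.

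\emph{First direction.} Start from $u\ge Au+\mathcal E(u)g$ and iterate it as in the lemma. Applying $A$ to the inequality and substituting back, positivity of $A$ gives, by induction on $N$,
\[
 u\ge A^{N+1}u+\mathcal E(u)\sum_{j=0}^{N}A^jg .
\]
Since $A^{N+1}u\ge0$ and $\mathcal E(u)\ge0$ (because $u\in X_+$ and $\mathcal E$ is positive), we get $u\ge\mathcal E(u)\sum_{j=0}^{N}A^jg$ for every $N$. Thus $u$ is an upper bound of the partial sums scaled by $\mathcal E(u)$.

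Next I pass to the supremum. If $\mathcal E(u)=0$ the claim $u\ge0$ is trivial. If $\mathcal E(u)>0$, then $u/\mathcal E(u)$ is an upper bound of all partial sums, so by definition of the supremum $\Phi\le u/\mathcal E(u)$, i.e.\ $u\ge\mathcal E(u)\Phi$. Applying the positive functional $\mathcal E$ gives $\mathcal E(u)\ge\mathcal E(u)\mathcal E(\Phi)$. Dividing by $\mathcal E(u)>0$ yields $\mathcal E(\Phi)\le1$.

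\emph{Converse.} Assume $\Phi=A\Phi+g$ and $0<\mathcal E(\Phi)\le1$, and set $z=\Phi/\mathcal E(\Phi)$. Linearity gives $\mathcal E(z)=1$ and $z=Az+g/\mathcal E(\Phi)$. Since $1/\mathcal E(\Phi)\ge1$ and $g\ge0$, we have $g/\mathcal E(\Phi)\ge g$, hence $z\ge Az+g$. Equivalently, $z\ge Az+\mathcal E(z)g$.

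The only delicate point is the passage to the supremum in a general ordered linear space, where only the least-upper-bound property is available. Dividing by $\mathcal E(u)$ handles this cleanly, and the case $\mathcal E(u)=0$ is treated separately. Everything else is routine.
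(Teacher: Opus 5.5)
Your proof is correct and matches the paper's argument: iterate the supersolution inequality to get $u\ge A^{N+1}u+\mathcal E(u)\sum_{j=0}^N A^jg$, drop $A^{N+1}u\ge0$, pass to the supremum, and apply $\mathcal E$; for the converse, use $z-Az-g=\bigl(1/\mathcal E(\Phi)-1\bigr)g\ge0$. Your handling of the supremum, using the upper bound $u/\mathcal E(u)$ and treating $\mathcal E(u)=0$ separately, just spells out a step that the paper leaves implicit.
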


\begin{proof}
Set $S_N=\sum_{j=0}^{N}A^jg$.  Iteration and positivity give
\[
 u\ge A^{N+1}u+\mathcal E(u)S_N\ge \mathcal E(u)S_N.
\]
Taking the supremum yields $u\ge \mathcal E(u)\Phi$. If $\mathcal E(u)>0$, applying $\mathcal E$ gives $\mathcal E(u)\ge \mathcal E(u)\mathcal E(\Phi)$, so $\mathcal E(\Phi)\le1$. For the converse, the relation $\Phi=A\Phi+g$ gives
\[
 z-Az-g
 =\frac{\Phi-A\Phi}{\mathcal E(\Phi)}-g
 =\left(\frac{1}{\mathcal E(\Phi)}-1\right)g\ge0.
\]
\end{proof}

\begin{remark}
\label{rem:rank-one-biological}
The rank-one case $Bf=\mathcal E(f)g$ has a standard interpretation in discrete-time
structured-population theory \cite{LiSchneider2002,RebarberTenhumbergTownley2012,SmithThieme2013,Thieme2023Turnover},
\cite[Section~12.3]{Thieme2024}. Here $A$ represents survival and transition among
individual states between reproductive events, while $B$ is the fertility or recruitment
operator. The positive functional $\mathcal E$ assigns to a population profile $f$ its total
production of recruits, and $g$ describes how the newly produced individuals are distributed among the possible states when they enter the population. Assume here that $X$ is a Banach lattice and $A$ is a bounded positive operator with $r(A)<1$. Then
\[
 \mathsf R_A=(I-A)^{-1}=\sum_{j=0}^{\infty}A^j
\]
is the lifetime survival or transition operator. Thus
$
 \Phi=\mathsf R_Ag
$
is the cumulative lifetime occupancy profile generated by one unit cohort of recruits.

The usual next-generation operator is \(B\mathsf R_A\), since it maps a cohort of recruits to the recruits produced over its lifetime. By contrast, \(\mathsf R_AB\) maps a population state to the cumulative lifetime occupancy generated by its recruits. With respect to positive supersolutions the two formulations are equivalent. Indeed, if
$u\ge Au+Bu$ and $v:=Bu$, then $u\ge\mathsf R_Av$ and hence
$v\ge B\mathsf R_Av$. Conversely, if $v\ge B\mathsf R_Av$ and
$u:=\mathsf R_Av$, then $u=Au+v\ge Au+Bu$. For both rank-one operators, the only possible
nonzero spectral value is
\[
 \mathcal E(\mathsf R_Ag)=\mathcal E(\Phi).
\]
Accordingly, $\mathcal E(\Phi)$ is the net reproduction number $R_0$, interpreted as the expected
number of recruits produced over the lifetime of one recruit. Finally, the inequality
$u\ge Au+\mathcal E(u)g$ says that $u$ is superinvariant under one survival-plus-recruitment
step. Thus $\mathcal E(\Phi)<1$ corresponds to under-replacement, while $\mathcal E(\Phi)=1$ corresponds
to exact replacement. \end{remark}

\subsection{The comparison equation}

Throughout this section, we consider the normalized equation
\eqref{eq:positive-normalized}, with $p\equiv1$:
\[
 x''(t)=x(\tau(t)).
\]
The delayed argument $\tau$ is nondecreasing, $\tau(t)\le t$, and
$\tau(t)\overset{t\to\infty}{\to}\infty$.
We write
\[
 a=\liminf_{t\to\infty}(t-\tau(t)),\qquad
 b=\limsup_{t\to\infty}(t-\tau(t)).
\]

\begin{thoughtexperiment}
Fix $0<a<2/e$ and $b\ge a$.  Choose a time at which the delay equals $b$, translate time so that this
point is $t=b$ and $\tau(b)=0$.
Then $\tau(t)\le t-a$ and, since $\tau$ is nondecreasing and $\tau(b)=0$,
\[
 \tau(t)\le0,\qquad t\le b.
\]
Hence a
positive decreasing solution satisfies
\begin{equation}\label{eq:positive-comparison}
 x''(t)\ge
 \begin{cases}
 x(t-a), & t\notin[a,b],\\
 x(0), & a\le t\le b
 \end{cases}.\end{equation}

\end{thoughtexperiment}

Much of this section is devoted to applying Corollary~\ref{cor:rank-one-feedback} to \eqref{eq:positive-comparison}. In the biological interpretation of Remark~\ref{rem:rank-one-biological}, nonoscillation is equivalent to a reproduction number at most $1$. 

It may be possible to obtain several of
the results of this section from known facts about  exponential dichotomy
and the representation of solutions to perturbed equations
\cite{BackesDragicevic2021,BackesDragicevicPituk2025,BackesEtAl2022,Coppel1978,Hale1977,HaleVerduynLunel1993,Schaeffer1975}.
However, as we are interested in a particular type of \emph{global}
solution, the application of such results may require significant modifications to
the framework and assumptions.

Instead, the methods of this
section are self-contained and expressed in the language of operator theory, cf.\
\cite{Azbelev1971,KantorovichAkilov1982,Krasnoselskii1964,KrasnoselskiiLifshitsSobolev1989,KrasnoselskiiEtAl1972,KreinRutman1948,Labovski1975,LabovskiThesis,Labovski1984,LabovskiAlves2018}. We now describe \eqref{eq:positive-comparison} in terms of operator equations.
Define the shift operator by
\[
 (\mathsf S_a f)(t):=f(t-a).
\]
For $\lambda>0$, we endow
\[
 X_\lambda=\left\{f\in C(\mathbb R):
 \sup_{t\in\mathbb R}e^{\lambda t}|f(t)|<\infty\right\}
\]
with the Banach norm
\[
 \|f\|_\lambda:=\sup_{t\in\mathbb R}e^{\lambda t}|f(t)|.
\]

\begin{lemma}[{\cite[p.~645, equations~(2.6)--(2.7)]{Atkinson1955}}]
\label{lem:terminal-green}
Let $g\in L^1_{\mathrm{loc}}(\mathbb R)$ and suppose that
$\int_t^\infty(1+u-t)|g(u)|\dd u<\infty$ for every $t\in\mathbb R$.
Then the positive operator
\[
 (\mathsf T g)(t)=\int_t^\infty(u-t)g(u)\dd u
\]
gives the unique solution of
\[
 y''=g\quad\text{a.e.},\qquad y(t),y'(t)\xrightarrow{t\to\infty}0.
\]
\end{lemma}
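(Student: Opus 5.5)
We need to prove Lemma \ref{lem:terminal-green}: for $g\in L^1_{\mathrm{loc}}$ with $\int_t^\infty(1+u-t)|g(u)|\dd u<\infty$, the function $y=\mathsf T g$ solves $y''=g$ a.e., satisfies $y,y'\to0$, and is the unique such solution. The plan has three parts: well-definedness and an alternative representation of $y$, existence with the boundary behaviour at infinity, and uniqueness.

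\textbf{Well-definedness.} By hypothesis the integral defining $(\mathsf T g)(t)$ converges absolutely for every $t$, since $(u-t)|g(u)|\le(1+u-t)|g(u)|$. Fix $t_0$. For $t\ge t_0$ we have $\int_t^\infty|g|\le\int_{t_0}^\infty(1+u-t_0)|g|<\infty$, so $G(t):=\int_t^\infty g(u)\dd u$ is defined and absolutely continuous, with $G'=-g$ a.e.

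\textbf{Existence and behaviour at infinity.} By Fubini, justified by absolute integrability,
\[
 \int_t^\infty(u-t)g(u)\dd u=\int_t^\infty\int_s^\infty g(u)\dd u\dd s=\int_t^\infty G(s)\dd s,
\]
and $\int_t^\infty|G(s)|\dd s\le\int_t^\infty(u-t)|g(u)|\dd u<\infty$. Hence $y(t)=\int_t^\infty G$ is $C^1$ with $y'=-G$. Since $G$ is absolutely continuous, $y'$ is absolutely continuous and $y''=-G'=g$ a.e.

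For the limits, $|y'(t)|\le\int_t^\infty|g|\to0$ as the tail of a convergent integral. Also $|y(t)|\le\int_t^\infty(u-t)|g(u)|\dd u\le\int_t^\infty(u-t_0)|g(u)|\dd u$ for $t\ge t_0$, which is again a vanishing tail, so $y(t)\to0$.

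\textbf{Uniqueness.} If $y_1,y_2$ are two solutions, then $w=y_1-y_2$ satisfies $w''=0$ a.e. with $w'$ absolutely continuous, so $w$ is affine. An affine function with $w,w'\to0$ at infinity is identically zero. Positivity of $\mathsf T$ is immediate from the kernel $u-t\ge0$.

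The only delicate point is justifying the Fubini interchange and the local absolute continuity under the stated integrability assumption; this is exactly what the weight $(1+u-t)$ secures, with the constant $1$ controlling $G$ and the factor $u-t$ controlling $y$.
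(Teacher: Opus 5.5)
Your proof is correct. The Fubini--Tonelli rewriting $(\mathsf Tg)(t)=\int_t^\infty\int_s^\infty g(u)\dd u\dd s$, the resulting $y'=-\int_t^\infty g$ and $y''=g$ a.e., the tail estimates for $y$ and $y'$, and uniqueness via the affine kernel of $y''=0$ in the Carath\'eodory class are all properly justified by the weight $(1+u-t)$.

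The paper gives no proof of its own and only cites Atkinson. Your argument is the standard one, and it is the same ``two integrations and Fubini--Tonelli'' reasoning the paper uses to derive \eqref{eq:positive-final-integral}.
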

\begin{lemma}
\label{lem:positive-operator}
Let $0<a<2/e$, $b\ge a$, and $f\in L^1([a,b])$, extended by zero
outside $[a,b]$. Let $\lambda>0$ satisfy $e^{\lambda a}/\lambda^2<1$
(for example, $\lambda=2/a$). Define
\[
 G_a(v)=\sum_{j=0}^\infty\frac{(v+ja)_+^{2j+1}}{(2j+1)!}.
\]
\begin{enumerate}[label=\textup{(\roman*)},leftmargin=*,itemsep=\medskipamount]
\item\label{item:positive-resolvent}
For every $\Phi\in X_\lambda$,
\[
 \Phi(t)=\int_a^bG_a(u-t)f(u)\dd u
 \quad\Longleftrightarrow\quad
 \Phi=\mathsf T\mathsf S_a\Phi+\mathsf Tf.
\]
These equations have a unique solution in $X_\lambda$.
\item
For every $\Phi\in X_\lambda$,
\[
 \Phi(t)=\int_a^bG_a(u-t)[f(u)-\Phi(u-a)]\dd u
 \quad\Longleftrightarrow\quad
 \Phi=\mathsf T\mathbf1_{[a,b]^c}\mathsf S_a\Phi+\mathsf Tf.
\]
These equations have a unique solution in $X_\lambda$.
\end{enumerate}
\end{lemma}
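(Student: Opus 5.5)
The plan is to treat both parts as fixed-point problems in $X_\lambda$ and use the contraction mapping principle: identify the unique solution with a Neumann series, then compute the kernel of that series explicitly. Part (ii) should then reduce to part (i).

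\emph{Step 1: contraction estimate.} For $\Phi\in X_\lambda$ we have $|\Phi(u-a)|\le\|\Phi\|_\lambda e^{-\lambda(u-a)}$. Hence
\[
 |(\mathsf T\mathsf S_a\Phi)(t)|\le\|\Phi\|_\lambda e^{\lambda a}\int_t^\infty(u-t)e^{-\lambda u}\dd u=\|\Phi\|_\lambda\frac{e^{\lambda a}}{\lambda^2}e^{-\lambda t}.
\]
So $\|\mathsf T\mathsf S_a\|\le e^{\lambda a}/\lambda^2<1$. The integrability hypothesis of Lemma~\ref{lem:terminal-green} holds, so $\mathsf T$ is well defined here, and $\mathsf T\mathsf S_a\Phi$ is continuous. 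The same bound holds for $\mathsf T\mathbf 1_{[a,b]^c}\mathsf S_a$, since the indicator only decreases the integrand in absolute value. For the example choice $\lambda=2/a$ one gets $e^{\lambda a}/\lambda^2=e^2a^2/4$, which is $<1$ exactly because $a<2/e$.

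\emph{Step 2: the forcing term.} Since $f$ is supported in $[a,b]$,
\[
 (\mathsf Tf)(t)=\int_a^b(u-t)_+f(u)\dd u .
\]
This function is continuous, vanishes for $t\ge b$, and grows at most linearly as $t\to-\infty$. Therefore $e^{\lambda t}|\mathsf Tf(t)|$ is bounded and $\mathsf Tf\in X_\lambda$. By Banach's fixed point theorem, each of the two equations $\Phi=\mathsf T\mathsf S_a\Phi+\mathsf Tf$ and $\Phi=\mathsf T\mathbf1_{[a,b]^c}\mathsf S_a\Phi+\mathsf Tf$ has a unique solution in $X_\lambda$. In the first case the solution is given by the norm-convergent Neumann series $\sum_{j\ge0}(\mathsf T\mathsf S_a)^j\mathsf Tf$.

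\emph{Step 3: kernel identity (the main computation).} I will show by induction that
\[
 \bigl((\mathsf T\mathsf S_a)^j\mathsf Tf\bigr)(t)=\int_a^b\frac{(u-t+ja)_+^{2j+1}}{(2j+1)!}f(u)\dd u .
\]
The inductive step interchanges the integrals by Fubini, which is justified by working first with $|f|$ and nonnegative kernels. It then uses
\[
 \int_t^\infty(s-t)\frac{(u-s+a+ja)_+^{2j+1}}{(2j+1)!}\dd s=\frac{(u-t+(j+1)a)_+^{2j+3}}{(2j+3)!},
\]
which is the usual Beta-integral for iterated Riemann--Liouville integrals. Summing over $j$ gives $\int_a^bG_a(u-t)f(u)\dd u$. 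Pointwise, the interchange of sum and integral is justified by monotone convergence applied to $|f|$; this is legitimate because the partial sums converge in $X_\lambda$, hence pointwise. This settles the equivalence in (i). If $\Phi\in X_\lambda$ solves the fixed-point equation, it equals the Neumann sum by uniqueness, i.e.\ the kernel formula. Conversely, the kernel formula defines the Neumann sum, which lies in $X_\lambda$ and solves the equation. I expect this step to be the only real obstacle, namely getting the kernel bookkeeping and the Fubini/monotone-convergence justifications right.

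\emph{Step 4: reduction of (ii) to (i).} Write $\mathbf1_{[a,b]^c}=1-\mathbf1_{[a,b]}$. Then the equation in (ii) becomes
\[
 \Phi=\mathsf T\mathsf S_a\Phi+\mathsf T\tilde f,\qquad \tilde f(u):=\bigl(f(u)-\Phi(u-a)\bigr)\mathbf1_{[a,b]}(u).
\]
Here $\tilde f\in L^1([a,b])$ because $\Phi$ is continuous. Applying (i) with $\tilde f$ in place of $f$ gives the stated equivalence. Uniqueness in (ii) was already obtained in Step 2.
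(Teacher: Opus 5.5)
Your proposal is correct and follows essentially the same route as the paper. Both arguments use the contraction bound $\|\mathsf T\mathsf S_a\|_\lambda\le e^{\lambda a}/\lambda^2$ and identify the Neumann terms with the kernels $(u-t+ja)_+^{2j+1}/(2j+1)!$; you obtain these by induction with a Beta integral, while the paper uses $\mathsf T\mathsf S_a=\mathsf S_a\mathsf T$. Both then dominate by $|f|$ to interchange sum and integral, and reduce (ii) to (i) by writing $\mathbf 1_{[a,b]^c}=1-\mathbf 1_{[a,b]}$ and absorbing $\mathbf 1_{[a,b]}\mathsf S_a\Phi$ into the forcing.
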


\begin{proof}
For compactly supported integrable $f$, repeated integration and Fubini's
theorem give
\[
 (\mathsf T^{j+1}\mathsf S_{ja}f)(t)
 =\int_{\mathbb R}\frac{(u-t+ja)_+^{2j+1}}{(2j+1)!}f(u)\dd u,
 \qquad j\ge0.
\]
Indeed, $\mathsf T\mathsf S_a=\mathsf S_a\mathsf T$, so each iteration adds two
integrations and shifts the argument by $a$. The estimate
$\|\mathsf T\mathsf S_a f\|_\lambda\le (e^{\lambda a}/\lambda^2)\|f\|_\lambda$ gives convergence
of the Neumann series. Applying this bound to $\mathsf T|f|$ also
justifies interchanging the sum and the integral. Consequently,
\begin{equation}\label{eq:positive-Green-series}
 \sum_{j=0}^\infty(\mathsf T^{j+1}\mathsf S_{ja}f)(t)
 =\int_{\mathbb R}G_a(u-t)f(u)\dd u.\end{equation}

For $f\in L^1([a,b])$ extended by zero, $\mathsf Tf\in X_\lambda$:
it vanishes for $t>b$ and grows at most linearly as $t\to-\infty$.
Since $\|\mathsf T\mathsf S_a\|_\lambda\le e^{\lambda a}/\lambda^2<1$, the Neumann
series and \eqref{eq:positive-Green-series} give assertion~\ref{item:positive-resolvent}.

For $\Phi\in X_\lambda$, put $h=f-\mathbf1_{[a,b]}\mathsf S_a\Phi$.
This function is integrable and compactly supported, so $\mathsf Th\in X_\lambda$.
By the Neumann series and \eqref{eq:positive-Green-series}, the integral in the statement equals
$(I-\mathsf T\mathsf S_a)^{-1}\mathsf Th$. Therefore
\[
\begin{aligned}
 \Phi=(I-\mathsf T\mathsf S_a)^{-1}\mathsf Th
 &\iff (I-\mathsf T\mathsf S_a)\Phi
       =\mathsf Tf-\mathsf T\mathbf1_{[a,b]}\mathsf S_a\Phi\\
 &\iff \Phi=\mathsf T\mathbf1_{[a,b]^c}\mathsf S_a\Phi+\mathsf Tf.
\end{aligned}
\]
Since
\begin{equation*} \|\mathsf T\mathbf1_{[a,b]^c}\mathsf S_a\|_\lambda
 \le\frac{e^{\lambda a}}{\lambda^2}<1,\end{equation*}
the last equation has the unique solution
$(I-\mathsf T\mathbf1_{[a,b]^c}\mathsf S_a)^{-1}\mathsf Tf$ in $X_\lambda$.

\end{proof}
\subsection{The extremal profile}
\label{subsec:positive-extremal-profile}

The preceding lemma justifies the following definition.
\begin{definition}
\label{def:positive-profile}
For $0<a<2/e$ and $b\ge a$, define
\[
 \Phi_{a,b}:=(I-\mathsf T\mathbf1_{[a,b]^c}\mathsf S_a)^{-1}
 \mathsf T\mathbf1_{[a,b]}
 =\sum_{j=0}^\infty(\mathsf T\mathbf1_{[a,b]^c}\mathsf S_a)^j
 \mathsf T\mathbf1_{[a,b]}.
\]
Equivalently, for every $t\in\mathbb R$,
\begin{equation}\label{eq:positive-excess}
 \Phi_{a,b}(t)=\int_a^bG_a(u-t)[1-\Phi_{a,b}(u-a)]\dd u.\end{equation}
Let $\mathcal E(f):=f(0)$ denote evaluation at zero. We write
\[
 \mathcal E(a,b):=\mathcal E(\Phi_{a,b})=\Phi_{a,b}(0).
\]
\end{definition}

\Needspace{7\baselineskip}

\begin{corollary}
\label{cor:positive-shape}
Let $0<a<2/e$ and $b\ge a$, with $G_a$ as in
Lemma~\ref{lem:positive-operator} and $\Phi_{a,b},\mathcal E(a,b)$ as in
Definition~\ref{def:positive-profile}. Set
\[
 r=-\frac2aW_0(-a/2)>0,\qquad s=\frac2aW_0(a/2)>0,
\]
where $W_0$ is the principal Lambert branch~\cite{Corless}.
If $b=a$, then $\Phi_{a,b}=0$. If $b>a$, then $\Phi_{a,b}$ is strictly positive and strictly decreasing on $\mathbb{R}$ and is a solution of
\begin{equation}  \label{eq:positive-profile-equation}
\Phi_{a,b}''(t)=
\begin{cases}
\Phi_{a,b}(t-a), & t\notin[a,b], \\
1, & a\le t\le b%
\end{cases}.%
\end{equation}
Moreover,
\begin{equation}\label{eq:positive-profile-decay}
 \Phi_{a,b}(t),\ \Phi_{a,b}'(t)=O(e^{-2t/a})
 \qquad\text{as }t\to\infty.
\end{equation}

The function $\Phi_{a,b}$ satisfies
\begin{equation}\label{eq:positive-history}
\begin{aligned}
 \Phi_{a,b}(t)&=A_1e^{-rt}+A_2e^{st},\qquad t\le a,\\
 \Phi_{a,b}(0)&=\mathcal E(a,b)=A_1+A_2.
\end{aligned}\end{equation}

\end{corollary}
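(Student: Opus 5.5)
Starting from the representation \eqref{eq:positive-excess}, the case $b=a$ is immediate: the interval $[a,b]$ is degenerate, so $\mathsf T\mathbf1_{[a,b]}=0$ and $\Phi_{a,b}=0$ by uniqueness in Lemma~\ref{lem:positive-operator}. For $b>a$ the plan is as follows. First, derive the differential equation \eqref{eq:positive-profile-equation} from the fixed-point identity $\Phi=\mathsf T\mathbf1_{[a,b]^c}\mathsf S_a\Phi+\mathsf T\mathbf1_{[a,b]}$ and Lemma~\ref{lem:terminal-green}. Since $\Phi\in X_\lambda$ is continuous, the right-hand side $g=\mathbf1_{[a,b]^c}\mathsf S_a\Phi+\mathbf1_{[a,b]}$ is locally integrable and decays like $e^{-\lambda t}$. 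Hence $\Phi''=g$ a.e., and $\Phi,\Phi'\to0$ at $+\infty$. This is exactly \eqref{eq:positive-profile-equation}.

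Second, for positivity and monotonicity, I would \emph{not} work with the signed form \eqref{eq:positive-excess}. Instead I would use the series of Definition~\ref{def:positive-profile}, all of whose terms are nonnegative because $\mathsf T$, $\mathsf S_a$ and multiplication by indicators are positive operators. So $\Phi\ge\mathsf T\mathbf1_{[a,b]}$, and this function is strictly positive for $t<b$. To cover all $t$, write $\Phi(t)=\int_t^\infty(u-t)g(u)\dd u$ with $g\ge0$. Then $\Phi'(t)=-\int_t^\infty g$, so $\Phi$ is nonincreasing. Strict positivity and strict decrease on all of $\mathbb R$ need $g>0$ on a set of positive measure beyond every $t$. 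For $t\ge b$ this follows from $g(u)=\Phi(u-a)$: since $\Phi>0$ on $(-\infty,b)$, we get $g>0$ on $(b,b+a)$. Inductively, positivity of $\Phi$ propagates forward in steps of $a$, so $\Phi>0$ and $\int_t^\infty g>0$ for every $t$.

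Third, the decay \eqref{eq:positive-profile-decay} should come from the autonomous equation $y''=y(t-a)$ on $t>b$. The expected route is a weighted-norm fixed point: work in $X_\lambda$ with $\lambda$ arbitrarily close to $2/a$, or argue directly. For $t\ge b$, $\Phi=\mathsf T\mathsf S_a\Phi$ restricted to the tail, and $\|\mathsf T\mathsf S_a\|_\lambda\le e^{\lambda a}/\lambda^2<1$ at $\lambda=2/a$, because $e^2/(4/a^2)=a^2e^2/4<1$ exactly when $a<2/e$. So choosing $\lambda=2/a$ in Lemma~\ref{lem:positive-operator} gives $\Phi\in X_{2/a}$, i.e.\ $\Phi=O(e^{-2t/a})$. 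Then $\Phi'(t)=-\int_t^\infty g$ inherits the same bound. This is the step where I would double-check that the uniqueness class in the lemma actually allows $\lambda=2/a$; the condition $a<2/e$ is exactly what makes it work.

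Finally, \eqref{eq:positive-history}. For $t\le a$ we have $t\notin[a,b]$ (apart from the endpoint), so $\Phi''(t)=\Phi(t-a)$ on the whole half-line $(-\infty,a]$. Here $\Phi\in X_\lambda$ grows at most like $e^{\lambda|t|}$ as $t\to-\infty$. This is the main obstacle: classifying solutions of $y''=y(t-a)$ on a left half-line with controlled growth. The characteristic equation is $z^2=e^{-az}$, i.e.\ $z=\pm e^{-az/2}$. Its real roots are $z=s$ and $z=-r$, since $-r=-e^{ar/2}$ is solvable for $a<2/e$ via $W_0(-a/2)$. I would argue via Laplace transform in the backward direction, or via the resolvent expansion $G_a$, that the bounded-growth solutions on $(-\infty,a]$ lie in the span of the modes with $\mathrm{Re}\,z$ in the admissible strip. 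The claim is that only $e^{st}$ and $e^{-rt}$ occur there, possibly after shrinking $\lambda$ between $r$ and the next root's real part. Concretely, I would first try to compute $\Phi$ on $(-\infty,a]$ directly from \eqref{eq:positive-excess}. For $u\ge a\ge t$ we have $u-t\ge0$, and summing the series $G_a$ on this range should yield a closed form $\sum_j(v+ja)^{2j+1}/(2j+1)!$ that reduces to a combination of $e^{s\cdot}$ and $e^{-r\cdot}$ by Lagrange inversion. Evaluating at $0$ then gives $\mathcal E(a,b)=A_1+A_2$.
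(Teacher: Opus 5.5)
Your proposal is correct. It differs from the paper only in how the closed form of $G_a$ is obtained.

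\textbf{Where you match the paper.} Your treatment of these points is exactly Part~I of the paper's proof:
\begin{itemize}
\item the case $b=a$;
\item \eqref{eq:positive-profile-equation} via Lemma~\ref{lem:terminal-green};
\item strict positivity and strict decrease via the nonnegative Neumann series, with positivity propagated forward in steps of $a$;
\item the decay via $\lambda=2/a$.
\end{itemize}
Your worry about the uniqueness class is moot. $\Phi_{a,b}$ is the same pointwise sum of nonnegative terms for every admissible $\lambda$, and the paper simply sets $C=\|\Phi_{a,b}\|_{2/a}$.

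For \eqref{eq:positive-history}, your ``concrete'' route is also the paper's reduction. For $t\le a$, every $u-t$ in \eqref{eq:positive-excess} is nonnegative, so everything rests on a two-mode formula for $G_a(v)$, $v\ge0$.

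\textbf{Where you differ.} The paper writes $G_a(v)=\frac1{2\pi i}\oint_{|z|=2/a}e^{zv}(z^2-e^{az})^{-1}\dd z$, using a geometric expansion and Cauchy's formula. It then uses Rouch\'e to see that the only enclosed poles are $r$ and $-s$, and reads off \eqref{eq:positive-Green-modes} from the residues.

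Your Lagrange inversion also works. You should write it out, though: as stated (``should yield'') it is the one step not yet proved. Let $z(w)=-\frac2aW_0(-aw/2)$, the branch of $ze^{-az/2}=w$ analytic at $w=0$. Lagrange--B\"urmann gives
\[
 \sum_{n\ge0}\frac{(x+na/2)^n}{n!}\,w^n=\frac{e^{xz(w)}}{1-az(w)/2},
 \qquad |w|<\frac{2}{ea}.
\]
Take $x=v-a/2$ and form the odd part at $w=\pm1$; this is admissible exactly because $a<2/e$. Using $z(1)=r$, $z(-1)=-s$, $e^{-ar/2}=1/r$ and $e^{as/2}=1/s$, you get
\[
 G_a(v)=\frac{e^{rv}}{r(2-ar)}-\frac{e^{-sv}}{s(2+as)},\qquad v\ge0,
\]
which is \eqref{eq:positive-Green-modes}.

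\textbf{What each route buys.} Your route identifies the exponents directly as the $W_0$-values in the statement, and turns $a<2/e$ into a radius-of-convergence condition. The contour route instead localizes the characteristic roots in $|z|<2/a$. That localization is also the input your Laplace-transform alternative would need, since every root of $\lambda^2=e^{-a\lambda}$ with $\operatorname{Re}\lambda\ge-2/a$ has $|\lambda|\le e<2/a$.

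That alternative, classifying growth-controlled solutions on a left half-line, is unnecessary and much heavier. \eqref{eq:positive-excess} already gives $\Phi_{a,b}$ explicitly on $(-\infty,a]$. Inserting the closed form there also produces the explicit amplitudes \eqref{eq:positive-amplitudes}, which are used later in Lemma~\ref{lem:positive-short}.
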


\begin{proof}
\proofpart{I}{Regularity and decay}
The defining positive Neumann series gives
$\Phi_{a,b}\ge0$, and $\Phi_{a,a}=0$. If $b>a$, its first term is
\[
 (\mathsf T\mathbf1_{[a,b]})(t)
 =\frac{(b-t)_+^2-(a-t)_+^2}{2}>0\qquad(t<b).
\]
Successive terms of the Neumann series propagate positivity from $t<b$ to $t<b+ja$,
so $\Phi_{a,b}>0$ on $\mathbb R$. Write its operator equation as
$\Phi_{a,b}=\mathsf Tg$, where
\[
 g(t)=\mathbf1_{[a,b]^c}(t)\Phi_{a,b}(t-a)
      +\mathbf1_{[a,b]}(t).
\]
This forcing is locally integrable and positive. Put
$C=\|\Phi_{a,b}\|_{2/a}$. The definition of the norm gives
\[
 |\Phi_{a,b}(t)|\le C e^{-2t/a},\qquad
 g(t)=\Phi_{a,b}(t-a)\le C e^2e^{-2t/a}\quad(t>b).
\]
Thus $\int_t^\infty(1+u-t)g(u)\dd u<\infty$ for every $t$,
so Lemma~\ref{lem:terminal-green} applies to $\mathsf Tg$.
In particular, $\Phi_{a,b}\in C^1(\mathbb R)$, its derivative is locally
absolutely continuous, and $\Phi_{a,b}''=g,$  which is
\eqref{eq:positive-profile-equation}. Differentiating the integral gives
\[
 \Phi_{a,b}'(t)=-\int_t^\infty g(u)\dd u<0.
\]
For $t>b$, the preceding bound on $g$ also yields
\[
 |\Phi_{a,b}'(t)|
 \le C e^2\int_t^\infty e^{-2u/a}\dd u
 =\frac{aCe^2}{2}e^{-2t/a}.
\]
This proves both asserted decay estimates.

\proofpart{II}{Representation}
On $|z|=2/a$, one has $|e^{az}|\le
e^2<4/a^2=|z^2|$. Rouch\'e's theorem shows that the only enclosed zeros of $%
z^2-e^{az}$ are $r$ and $-s$. To obtain the contour representation, note that on $|z|=2/a$,
\[
 \left|\frac{e^{az}}{z^2}\right|\le\left(\frac{ea}{2}\right)^2<1.
\]
Consequently, the geometric expansion
\[
 \frac{e^{zv}}{z^2-e^{az}}
 =\sum_{j=0}^\infty\frac{e^{z(v+ja)}}{z^{2j+2}}
\]
converges uniformly on the circle. Integrating term by term around the
positively oriented circle and applying Cauchy's differentiation formula,
we obtain, for $v\ge0$,
\[
 \begin{aligned}
 \frac{1}{2\pi i}\int_{|z|=2/a}\frac{e^{zv}}{z^2-e^{az}}\dd z
 &=\sum_{j=0}^\infty\frac{1}{2\pi i}
   \int_{|z|=2/a}\frac{e^{z(v+ja)}}{z^{2j+2}}\dd z\\
 &=\sum_{j=0}^\infty\frac{(v+ja)^{2j+1}}{(2j+1)!}
 =G_a(v).
 \end{aligned}
\]
The last equality uses $v+ja\ge0$, so the positive parts in the definition
of $G_a$ can be omitted.
Taking the two residues yields
\begin{equation}  \label{eq:positive-Green-modes}
G_a(v)=\frac{e^{rv}}{r(2-ar)} -\frac{e^{-sv}}{s(2+as)},\qquad v\geq0.\end{equation}

If $t\le a$, then $u-t\ge0$ throughout the integral in
\eqref{eq:positive-excess}. Inserting \eqref{eq:positive-Green-modes}
gives \eqref{eq:positive-history}, with
\begin{equation}\label{eq:positive-amplitudes}
\begin{aligned} A_1&=\frac1{r(2-ar)}\int_a^b
e^{ru}[1-\Phi_{a,b}(u-a)]\dd u,\\ A_2&=-\frac1{s(2+as)}\int_a^b
e^{-su}[1-\Phi_{a,b}(u-a)]\dd u. \end{aligned}\end{equation}
\end{proof}

\begin{corollary}
\label{cor:positive-supersolution}
Let $0<a<2/e$ and $b\ge a$.
Every global positive decreasing
solution $z$ of \eqref{eq:positive-comparison} satisfies
\[
 z(t)\ge z(0)\Phi_{a,b}(t),\qquad t\in\mathbb R.
\]
Such a solution exists if and only if $\mathcal E(a,b)\leq1$.
\end{corollary}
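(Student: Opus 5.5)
We apply Corollary~\ref{cor:rank-one-feedback} in the ordered space $X=X_\lambda$ with $\lambda=2/a$ and the pointwise order. We take $A=\mathsf T\mathbf1_{[a,b]^c}\mathsf S_a$, $g=\mathsf T\mathbf1_{[a,b]}$ and $\mathcal E(f)=f(0)$. By Lemma~\ref{lem:positive-operator} and Definition~\ref{def:positive-profile}, the Neumann series $\sum_j A^jg$ converges in norm to $\Phi_{a,b}$. Each partial sum is nonnegative and the partial sums increase, so norm convergence gives pointwise convergence and $\Phi_{a,b}$ is the pointwise supremum. Moreover $\Phi_{a,b}=A\Phi_{a,b}+g$.

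\textbf{Step 1: the supersolution inequality.} Let $z$ be a global positive decreasing solution of \eqref{eq:positive-comparison}. The forcing $z''\ge0$ makes $z$ convex; together with positivity and monotonicity this gives that $z$ and $z'$ have limits. The limit of $z'$ must be $0$ (else $z$ would become negative), and then convexity forces $z(\infty)=:c\ge0$.

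We first treat $c=0$. Then $z(t)=\int_t^\infty(u-t)z''(u)\dd u$, which is Lemma~\ref{lem:terminal-green} applied with monotone convergence to truncations; finiteness of the integral follows from $z'(t)=-\int_t^\infty z''$. Using \eqref{eq:positive-comparison} and positivity of $\mathsf T$,
\[
 z\ge \mathsf T\mathbf1_{[a,b]^c}\mathsf S_a z+z(0)\,\mathsf T\mathbf1_{[a,b]} = Az+\mathcal E(z)g .
\]
Iterating exactly as in the proof of Corollary~\ref{cor:rank-one-feedback}, using only positivity of $A$ on nonnegative functions, gives $z\ge z(0)\sum_{j\le N}A^jg$ for every $N$. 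The iterate $A^{N+1}z\ge0$ is simply discarded, so we never need $z\in X_\lambda$. Letting $N\to\infty$ pointwise yields $z\ge z(0)\Phi_{a,b}$.

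If $c>0$, then $z''\ge z(t-a)\ge c$ for all $t\notin[a,b]$, which contradicts $z'\to0$. So $c=0$ is the only case.

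\textbf{Step 2: necessity.} Evaluating Step~1 at $t=0$ gives $z(0)\ge z(0)\mathcal E(a,b)$, and since $z(0)>0$ we get $\mathcal E(a,b)\le1$.

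\textbf{Step 3: sufficiency.} Suppose $\mathcal E(a,b)\le1$. If $b>a$, Corollary~\ref{cor:positive-shape} shows that $\Phi_{a,b}$ is strictly positive and strictly decreasing, so $0<\mathcal E(a,b)\le1$. Set $z=\Phi_{a,b}/\mathcal E(a,b)$. Then $z(0)=1$, and $z$ is positive and decreasing. By \eqref{eq:positive-profile-equation},
\[
 z''(t)=z(t-a)\quad (t\notin[a,b]),\qquad z''(t)=\frac{1}{\mathcal E(a,b)}\ge1=z(0)\quad(t\in[a,b]),
\]
so $z$ is a solution of the inequality \eqref{eq:positive-comparison}.

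The degenerate case $b=a$ has $\Phi_{a,a}=0$, so $\mathcal E=0\le1$, and we need a separate witness. The autonomous equation $z''=z(t-a)$ with $a<2/e$ has the positive decreasing solution $e^{-rt}$, where $r$ is as in Corollary~\ref{cor:positive-shape}; this is the mode from the residue computation. So a solution exists in this case too.

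\textbf{Main obstacle.} The delicate point is Step~1. There we must justify the representation $z=\mathsf Tz''$ for an arbitrary positive decreasing global solution without knowing its decay rate. We must also check that the iteration needs no membership of $z$ in $X_\lambda$: dropping the positive remainder $A^{N+1}z$ handles this, provided every term is finite. Finiteness follows because each term is dominated by $z$ itself.
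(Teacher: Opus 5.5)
Your proof is correct and follows essentially the paper's route. You integrate twice from infinity to get $z\ge Az+z(0)g$ with $A=\mathsf T\mathbf1_{[a,b]^c}\mathsf S_a$ and $g=\mathsf T\mathbf1_{[a,b]}$, then iterate the rank-one inequality while discarding $A^{N+1}z\ge0$; the paper packages this step as Corollary~\ref{cor:rank-one-feedback} applied on the span of $\{A^nz\}$, $\{A^ng\}$ and $\Phi_{a,b}$. You then evaluate at $0$ and use $\Phi_{a,b}$ for the converse (an exponential mode when $b=a$, where the paper takes $e^{-2t/a}$ instead of $e^{-rt}$). The paper also skips your limit analysis of $z,z'$: it simply drops the nonnegative endpoint terms $z(R)-(R-t)z'(R)$ in the backward Taylor formula before letting $R\to\infty$.
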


\begin{proof}
For a positive decreasing solution $z$, backward Taylor expansion gives
\[
 z(t)=z(R)-(R-t)z'(R)+\int_t^R(u-t)z''(u)\dd u.
\]
The endpoint terms have nonnegative sum. Dropping them, using
\eqref{eq:positive-comparison}, and letting $R\to\infty$ by monotone
convergence gives
\[
 z\ge \mathsf T\mathbf1_{[a,b]^c}\mathsf S_a z
     +z(0)\mathsf T\mathbf1_{[a,b]}.
\]
Set
\[
 A=\mathsf T\mathbf1_{[a,b]^c}\mathsf S_a,
 \qquad g=\mathsf T\mathbf1_{[a,b]},
 \qquad \mathcal E(f)=f(0).
\]

Let $X$ be the linear span of
\[
 \{A^nz:n\ge0\}\cup\{A^ng:n\ge0\}\cup\{\Phi_{a,b}\},
\]
ordered by the pointwise cone inherited from the ambient function space:
$f\le h$ means $f(t)\le h(t)$ for every $t\in\mathbb R$. With this order,
$A:X\to X$ is positive and $\mathcal E(f)=f(0)$ is a positive linear functional
on $X$.
By Definition~\ref{def:positive-profile},
\[
 \Phi_{a,b}=\sup_{N\ge0}\sum_{j=0}^N A^jg,
 \qquad
 \Phi_{a,b}=A\Phi_{a,b}+g.
\]
The preceding supersolution inequality is
\[
 z\ge Az+\mathcal E(z)g.
\]
Corollary~\ref{cor:rank-one-feedback} therefore gives
\[
 z\ge \mathcal E(z)\Phi_{a,b}=z(0)\Phi_{a,b}.
\]
Applying $\mathcal E$ yields
\[
 z(0)\ge z(0)\mathcal E(a,b).
\]
Since $z(0)>0$, necessarily $\mathcal E(a,b)\le1$.
Conversely, suppose $b>a$ and $\mathcal E(a,b)\le1$. Then $\Phi_{a,b}$
is positive and decreasing and, by
\eqref{eq:positive-profile-equation}, satisfies
\eqref{eq:positive-comparison}, since its forcing on $[a,b]$ is
$1\ge\Phi_{a,b}(0)=\mathcal E(a,b)$. If $b=a$, then $g=0$ and $\mathcal E(a,a)=0$.
In this case $z(t)=e^{-2t/a}$ is a positive decreasing solution because
$a<2/e$ gives
\[
 z''(t)=\frac4{a^2}z(t)>e^2z(t)=z(t-a).\qedhere
\]
\end{proof}

\subsection{The critical curve
\texorpdfstring{$b=\beta(a)$}{b=beta(a)}}
\label{subsec:positive-critical-curve}

\begin{proposition}
\label{prop:positive-crossing} For each $0<a<2/e$ there is a unique $%
\beta(a)\in(a,\sqrt2)$ with
\begin{equation*}  %
\mathcal E(a,\beta(a))=1, \qquad \mathcal E(a,b)\leq1\ \Longleftrightarrow\ b\le\beta(a).\end{equation*}
\end{proposition}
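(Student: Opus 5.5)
The plan is to study $b\mapsto\mathcal E(a,b)$ for fixed $0<a<2/e$ and show four things: it is continuous, strictly increasing on $[a,\infty)$, equal to $0$ at $b=a$, and strictly greater than $1$ at $b=\sqrt2$. The intermediate value theorem then gives a unique $\beta(a)\in(a,\sqrt2)$ with $\mathcal E(a,\beta(a))=1$. Strict monotonicity gives the equivalence $\mathcal E(a,b)\le1\iff b\le\beta(a)$.

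\emph{Continuity.} The profile $\Phi_{a,b}$ is the Neumann series for $(I-\mathsf T\mathbf1_{[a,b]^c}\mathsf S_a)^{-1}\mathsf T\mathbf1_{[a,b]}$ in $X_\lambda$ with $\lambda=2/a$. Both the operator and the forcing $\mathsf T\mathbf1_{[a,b]}$ depend continuously on $b$ in operator norm and in $\|\cdot\|_\lambda$ respectively. The perturbation $\mathsf T\mathbf1_{[b,b']}\mathsf S_a$ has norm $O(b'-b)$ on $X_\lambda$, since $\mathsf T$ of an indicator of a short interval is small in the weighted norm. The operator-norm bound $e^{\lambda a}/\lambda^2<1$ is uniform in $b$, so the resolvent is continuous in $b$. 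Evaluation at $0$ is a bounded functional on $X_\lambda$, so $\mathcal E(a,b)$ is continuous.

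\emph{Monotonicity.} For $b<b'$, I would show $\Phi_{a,b'}\ge\Phi_{a,b}$ with strict inequality at $0$, by comparing forcings. On $[b,b']$ the equation for $\Phi_{a,b'}$ has forcing $1$, while the one for $\Phi_{a,b}$ has forcing $\Phi_{a,b}(t-a)$. Writing $\Phi_{a,b}=A\Phi_{a,b}+g$ with the $b'$-operators, we get $\Phi_{a,b}\le A'\Phi_{a,b}+g'$ where $A'=\mathsf T\mathbf1_{[a,b']^c}\mathsf S_a$ and $g'=\mathsf T\mathbf1_{[a,b']}$, provided $\Phi_{a,b}(t-a)\le1$ on $[b,b']$. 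Iterating as in Corollary~\ref{cor:rank-one-feedback}, with the remainder $A'^{N}\Phi_{a,b}\to0$ in $X_\lambda$, yields $\Phi_{a,b}\le\Phi_{a,b'}$. Strictness follows because the difference is $\mathsf T$ of a nonnegative forcing that is positive on a set of positive measure, hence positive at $0<b$.

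The catch is that this argument needs $\Phi_{a,b}(t-a)\le1$ for $t\ge b$. Since $\Phi_{a,b}$ is decreasing, that reduces to $\Phi_{a,b}(b-a)\le1$. This holds on the relevant range: if $\mathcal E(a,b)=\Phi_{a,b}(0)\le1$ then $\Phi_{a,b}(b-a)<1$, because $b-a>0$ and $\Phi_{a,b}$ is decreasing. So monotonicity holds on the set where $\mathcal E\le1$. Once $\mathcal E$ exceeds $1$, it cannot return to $\le1$: at the first $b^*$ beyond a point where $\mathcal E>1$ with $\mathcal E(a,b^*)=1$, the previous argument started at $b^*$ shows $\mathcal E$ is strictly increasing past $b^*$, which forces $\mathcal E>1$ just beyond $b^*$. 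Combining this with strictness on $\{\mathcal E\le1\}$ gives the equivalence and the uniqueness of the crossing. I expect this bookkeeping around the crossing to be the delicate point.

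\emph{Endpoint $b\ge\sqrt2$.} This gives $\beta(a)<\sqrt2$. Suppose $\mathcal E(a,\sqrt2)\le1$. Then by Corollary~\ref{cor:positive-supersolution}, $\Phi:=\Phi_{a,\sqrt2}$ is a positive decreasing solution with $\Phi''\ge\Phi(0)$ on $[a,\sqrt2]$ and $\Phi''>0$ elsewhere. The Taylor formula used in Corollary~\ref{cor:positive-supersolution} gives $\Phi(a)\ge\int_a^{\sqrt2}(u-a)\Phi(0)\dd u=\Phi(0)(\sqrt2-a)^2/2$. On $[0,a]$ the forcing $\Phi(t-a)\ge\Phi(0)$ is strictly positive, and this gains the missing amount $\Phi(0)-\Phi(a)$. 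Here I would use the explicit history \eqref{eq:positive-history} to compute $\Phi(0)$ exactly. The goal is the strict inequality $\Phi(0)>\Phi(0)$, i.e.\ the limsup threshold of Proposition~\ref{prop:BDS16} applied to this comparison equation: there $\int_{\tau}^{t}(s-\tau)\dd s=\sqrt2^{\,2}/2=1$, and the extra positive forcing on $[0,a]$ makes it strictly greater than $1$. The main obstacle is making this last strictness quantitative, since it must hold for every $a\in(0,2/e)$.
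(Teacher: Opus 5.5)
\textbf{Verdict.} Your continuity and monotonicity steps are essentially the paper's. The gap is in the endpoint step, which you leave open.

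\textbf{Monotonicity and the crossing.} The paper writes $\Phi_{a,b_2}-\Phi_{a,b_1}$ as the resolvent of $\mathsf T\mathbf 1_{[a,b_2]^c}\mathsf S_a$ applied to $\mathsf T[\mathbf 1_{[b_1,b_2]}(1-\Phi_{a,b_1}(\cdot-a))]$. It then uses $\mathcal E(a,b_1)\le1$ exactly as you do. The crossing bookkeeping you call delicate is simpler than you think. Your condition $\Phi_{a,b}(b-a)\le1$ does not involve $b'$. So from any $b$ with $\mathcal E(a,b)\le1$ you get $\mathcal E(a,b')>\mathcal E(a,b)$ for \emph{every} $b'>b$. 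Define $\beta(a)$ as the first $b$ with $\mathcal E(a,b)=1$; it exists by $\mathcal E(a,a)=0$, continuity, and $\mathcal E(a,\sqrt2)>1$. Then $\mathcal E<1$ before $\beta(a)$ by that choice, and $\mathcal E>1$ after it by the global monotonicity. No ``return'' argument is needed. Your ``first $b^*$ beyond a point where $\mathcal E>1$'' also looks on the wrong side: since $\mathcal E(a,a)=0$, the first crossing comes before any point where $\mathcal E>1$.

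\textbf{The endpoint step.} You leave $\mathcal E(a,\sqrt2)>1$ unproved. You propose an explicit-history computation and worry about strictness that is uniform in $a$. Neither is needed. You already observed that on $[0,a]$ the forcing is $\Phi(t-a)\ge\Phi(0)$. On $[a,\sqrt2]$ it is $1\ge\Phi(0)$ under your assumption $\mathcal E(a,\sqrt2)\le1$. Hence $\Phi''\ge\Phi(0)$ on all of $[0,\sqrt2]$. A single backward Taylor expansion from $\sqrt2$ to $0$ then gives
\[
 \Phi(0)=\Phi(\sqrt2)-\sqrt2\,\Phi'(\sqrt2)+\int_0^{\sqrt2}u\,\Phi''(u)\dd u
 \ge\Phi(\sqrt2)+\Phi(0)>\Phi(0).
\]
This is a contradiction, because $\Phi(\sqrt2)>0$ and $\Phi'(\sqrt2)<0$. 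Strictness comes for free from these positive terminal terms, independently of $a$. This is the paper's argument, stated there for any $b>a$ with $\mathcal E(a,b)\le1$ to conclude $b<\sqrt2$.

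\textbf{Repairing your two-stage version.} Your version that stops at $a$ also closes if you keep the derivative term. Use $-a\Phi'(a)=a\int_a^\infty\Phi''\ge a(\sqrt2-a)\Phi(0)$ alongside your bound $\Phi(a)\ge\Phi(0)(\sqrt2-a)^2/2$ and $\int_0^a u\Phi''\ge\Phi(0)a^2/2$. The three coefficients $\tfrac12(\sqrt2-a)^2+a(\sqrt2-a)+\tfrac12a^2$ sum to exactly $1$.
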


\begin{proof}
Fix $0<a<2/e$. For $b_2>b_1\ge a$,
\begin{equation}  \label{eq:positive-difference}
\Phi_{a,b_2}-\Phi_{a,b_1} =(I-\mathsf T\mathbf1_{[a,b_2]^c}\mathsf S_a )^{-1} \left[t\mapsto\int_t^%
\infty(u-t)\mathbf{1}_{[b_1,b_2]}(u) (1-\Phi_{a,b_1}(u-a))\dd u\right]%
.\end{equation}
Suppose $\mathcal E(a,b_1)=\Phi_{a,b_1}(0)\le1$. Then
$1-\Phi_{a,b_1}(u-a)\ge0$ for $b_1\le u\le b_2$, with strict
inequality for $b_1<u<b_2$. This follows from strict decrease when
$b_1>a$, while $\Phi_{a,a}=0$ handles $b_1=a$.
The function in brackets in \eqref{eq:positive-difference} is therefore
nonnegative and strictly positive at zero. The positive Neumann expansion
of the inverse operator, whose first term is the identity, gives
$\mathcal E(a,b_2)>\mathcal E(a,b_1)$. Thus, for fixed $a$, the function $b\mapsto \mathcal E(a,b)$
strictly increases until it reaches one.

For fixed $a$, the integral kernels and the Neumann series also show that
$b\mapsto \mathcal E(a,b)$ is continuous. Since $\mathcal E(a,a)=0$, it remains to prove
that this function reaches one. If $b>a$ and $\mathcal E(a,b)\le1$, the forcing in
\eqref{eq:positive-profile-equation} is at least $\mathcal E(a,b)$ on $[0,b]$.
Backward Taylor expansion gives
\[
 \mathcal E(a,b)\ge\Phi_{a,b}(b)+\frac{b^2}{2}\mathcal E(a,b)
 >\frac{b^2}{2}\mathcal E(a,b),
\]
so $b<\sqrt2$. Hence $\mathcal E(a,\sqrt2)>1$, proving the assertion.
\end{proof}

\begin{lemma}
\label{lem:positive-short} Let $0<a<2/e$ and $a\le b\leq2a$. Set $\ell=b-a$
and
\begin{equation*}I(z,\ell)=%
\begin{cases}
(e^{z\ell}-1)/z, & z\ne0, \\
\ell, & z=0,%
\end{cases}
\qquad c_+=\frac r{2-ar},\qquad c_-=-\frac s{2+as}. \end{equation*}
\Needspace{9\baselineskip}
The amplitude vector $(A_1,A_2)^{\top}$ in Corollary~\ref{cor:positive-shape} is the unique solution of
\begin{equation}  \label{eq:positive-matrix}
\begin{pmatrix}
1+c_+\ell & c_+I(r+s,\ell) \\
c_-I(-r-s,\ell) & 1+c_-\ell%
\end{pmatrix}
\begin{pmatrix}
A_1 \\
A_2%
\end{pmatrix}
=%
\begin{pmatrix}
c_+I(r,\ell) \\
c_-I(-s,\ell)%
\end{pmatrix}%
.\end{equation}
The equality $b=\beta(a)$ is characterized by
\begin{equation*}  %
A_1+A_2=1.\end{equation*}
Define the scalar function
\begin{equation*}\begin{aligned} \Psi_a(\ell)={}& [1+c_+(\ell-I(r,\ell))]
[1+c_-(\ell-I(-s,\ell))]\\ &-c_+c_- [I(r+s,\ell)-I(r,\ell)]
[I(-r-s,\ell)-I(-s,\ell)]. \end{aligned}\end{equation*}
If $\beta(a)\le2a$, then
\begin{equation*}
\beta(a)=a+\ell_a,\qquad \Psi_a(\ell_a)=0,\qquad 0<\ell_a\le a,
\end{equation*}
and this root is unique.
\end{lemma}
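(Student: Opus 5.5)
The plan is to exploit the history representation \eqref{eq:positive-history}. When $b\le 2a$, every $u\in[a,b]$ satisfies $u-a\in[0,\ell]\subset[0,a]$. Hence the unknown $\Phi_{a,b}(u-a)$ inside the amplitude formulas \eqref{eq:positive-amplitudes} is itself given by the two-mode form $A_1e^{-r(u-a)}+A_2e^{s(u-a)}$. Substituting this into \eqref{eq:positive-amplitudes} closes the system: both amplitudes become affine functions of $(A_1,A_2)$.

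\textbf{Step 1: the linear system.} I would change variables $w=u-a\in[0,\ell]$ and use the characteristic relations $r^2=e^{ar}$ and $s^2=e^{-as}$ to simplify the prefactors. For instance, $e^{ru}/(r(2-ar))=e^{ar}e^{rw}/(r(2-ar))=r\,e^{rw}/(2-ar)=c_+e^{rw}$, and similarly $-e^{-su}/(s(2+as))=c_-e^{-sw}$. The integrals $\int_0^\ell e^{rw}\dd w=I(r,\ell)$, $\int_0^\ell e^{rw}e^{-rw}\dd w=\ell$ and $\int_0^\ell e^{rw}e^{sw}\dd w=I(r+s,\ell)$, together with their analogues for $A_2$, then produce exactly \eqref{eq:positive-matrix}. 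Uniqueness of the solution follows from Lemma~\ref{lem:positive-operator}: $\Phi_{a,b}$ is the unique solution in $X_\lambda$. Conversely, any solution $(A_1,A_2)$ defines $\Phi$ on $(-\infty,a]$ by the two-mode formula, and then on all of $\mathbb R$ by \eqref{eq:positive-excess}; this function lies in $X_\lambda$ and is consistent with its own history, so it must equal $\Phi_{a,b}$. This forces the matrix to be nonsingular. I will need to check this consistency argument, since it gives uniqueness without computing the determinant.

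\textbf{Step 2: the characterization $A_1+A_2=1$.} This follows from $\mathcal E(a,b)=\Phi_{a,b}(0)=A_1+A_2$ together with Proposition~\ref{prop:positive-crossing}, which says that $b=\beta(a)$ exactly when $\mathcal E(a,b)=1$.

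\textbf{Step 3: the scalar function $\Psi_a$.} I would impose $A_2=1-A_1$ and substitute it into both rows of \eqref{eq:positive-matrix}.
\begin{itemize}
\item The first row gives $(1+c_+\ell-c_+I(r+s,\ell))A_1=c_+(I(r,\ell)-I(r+s,\ell))$.
\item The second row gives $(c_-I(-r-s,\ell)-1-c_-\ell)A_1=c_-(I(-s,\ell)-\ell)-1$.
\end{itemize}
An overdetermined pair of linear equations in one unknown is consistent exactly when a $2\times2$ determinant vanishes. Equivalently, $A_1+A_2=1$ holds for the solution iff $(1,1,-1)$-type elimination succeeds. Concretely, I would apply Cramer's rule to compute $A_1+A_2-1=N/\det M$ and identify the numerator $N$ with $\pm\Psi_a$. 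To see this, add and subtract the right-hand side column: writing $1+c_+\ell-c_+I(r,\ell)$ and $c_+(I(r+s,\ell)-I(r,\ell))$ as the first row of $M$ minus the right side, and doing the same for the second row, turns $N$ into the determinant of the matrix
\[
\begin{pmatrix} 1+c_+(\ell-I(r,\ell)) & c_+(I(r+s,\ell)-I(r,\ell))\\ c_-(I(-r-s,\ell)-I(-s,\ell)) & 1+c_-(\ell-I(-s,\ell))\end{pmatrix}.
\]
This determinant is precisely $\Psi_a(\ell)$. The cofactor bookkeeping and sign in this identification is the main obstacle, but it is routine algebra. Because $\det M\ne0$ by Step 1, the condition $A_1+A_2=1$ is equivalent to $\Psi_a(\ell)=0$.

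\textbf{Step 4: existence and uniqueness of the root.} If $\beta(a)\le 2a$, set $\ell_a=\beta(a)-a$. Proposition~\ref{prop:positive-crossing} gives $\beta(a)>a$, so $0<\ell_a\le a$, and Step 3 gives $\Psi_a(\ell_a)=0$. For uniqueness, any root $\ell\in(0,a]$ yields $b=a+\ell$ with $\mathcal E(a,b)=1$. The strict monotonicity in Proposition~\ref{prop:positive-crossing} then forces $b=\beta(a)$, so the root is unique.
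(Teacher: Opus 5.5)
Your proposal is correct and follows the paper's proof essentially step for step. You make the same substitution $v=u-a$ into \eqref{eq:positive-amplitudes}, obtain nonsingularity from the uniqueness in Lemma~\ref{lem:positive-operator}, and identify $\Psi_a(\ell)=\det(\mathsf M-\mathbf d(1,1))=\det(\mathsf M)\bigl[1-\mathcal E(a,a+\ell)\bigr]$; your Cramer/multilinearity computation gives exactly this, with $N=-\Psi_a$. The only cosmetic difference is in nonsingularity: you show that any solution of the inhomogeneous system rebuilds $\Phi_{a,b}$, while the paper turns a kernel vector into a nonzero homogeneous solution and reaches a contradiction. The consistency check you flagged does go through, because the linear system says precisely that the integral \eqref{eq:positive-excess} with forcing $1-h(u-a)$ returns $h$ on $(-\infty,a]$.
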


\begin{proof}
Set $v=u-a$. Since $0\le v\le\ell\le a$, the history formula
\eqref{eq:positive-history} gives $\Phi_{a,b}(v)=A_1e^{-rv}+A_2e^{sv}$.
Substitution into \eqref{eq:positive-amplitudes} yields
\[
\begin{aligned}
 A_1&=c_+\bigl[I(r,\ell)-A_1\ell-A_2I(r+s,\ell)\bigr],\\
 A_2&=c_-\bigl[I(-s,\ell)-A_1I(-r-s,\ell)-A_2\ell\bigr],
\end{aligned}
\]
which is \eqref{eq:positive-matrix}.

To prove invertibility, suppose $\ell>0$ (the case $\ell=0$ is trivial) and $(B_1,B_2)$ is a nonzero vector in the
kernel of the square matrix in \eqref{eq:positive-matrix}. Define $h(v)=B_1e^{-rv}+B_2e^{sv}$, and define
\[
 \psi(t)=-\int_a^bG_a(u-t)h(u-a)\dd u.
\]
The homogeneous version of \eqref{eq:positive-matrix} and \eqref{eq:positive-Green-modes}
give $\psi(t)=h(t)$ for $t\le a$, so $\psi\not\equiv0$.
By Lemma~\ref{lem:positive-operator}\textup{\ref{item:positive-resolvent}}, applied to the forcing
$u\mapsto-\mathbf1_{[a,b]}(u)h(u-a)$, we have $\psi\in X_{2/a}$. Since $u-a\le a$ on $[a,b]$, we may replace
$h(u-a)$ by $\psi(u-a)$ in its defining integral. The homogeneous case of Lemma~\ref{lem:positive-operator}
therefore gives $\psi=0$, a contradiction. Thus the matrix is invertible.

Let $\mathsf{M}=\mathsf{M}(a,\ell)$ denote the coefficient matrix in~\eqref{eq:positive-matrix}, and let $\mathbf{d}=\mathbf{d}(a,\ell)$ denote its right-hand-side vector.
Direct expansion gives the first equality below, and taking $\mathsf{M}$ as a common factor gives
\[
\begin{aligned}
 \Psi_a(\ell)
 &=\det\!\bigl(\mathsf{M}-\mathbf{d}(1,1)\bigr)\\
 &=\det(\mathsf{M})\bigl[1-(1,1)\mathsf{M}^{-1}\mathbf{d}\bigr]\\
 &=\det(\mathsf{M})\bigl[1-\mathcal E(a,a+\ell)\bigr],
\end{aligned}
\]
since $\mathsf{M}^{-1}\mathbf{d}=(A_1,A_2)^{\top}$ and
$A_1+A_2=\mathcal E(a,a+\ell)$. Because $\det(\mathsf{M})\ne0$,
\[
 \Psi_a(\ell)=0
 \iff \mathcal E(a,a+\ell)=1
 \iff a+\ell=\beta(a).
\]
The last equivalence follows from Proposition~\ref{prop:positive-crossing},
which establishes the unique crossing $\mathcal E(a,\beta(a))=1$.
Thus, if $\beta(a)\le2a$, the unique root in $0<\ell\le a$ is
$\ell=\beta(a)-a$.
\end{proof}

\begin{lemma}
Let $0<a<2/e$ and put
\[
 d=\frac1s-\frac1r.
\]
For every $b\ge2a$,
\begin{equation}\label{eq:positive-long-profile}
 \mathcal E(a,b)=\frac{rs}{2}(b-2a+2d)^2+rs(1-d^2)-1.\end{equation}
Consequently, $\beta(a)\ge2a$ exactly when $rs(1+d^2)\le2$. In that case
\begin{equation}\label{eq:positive-explicit}
 \beta(a)=2a-2d+\sqrt{2d^2+\frac4{rs}-2}.\end{equation}
\end{lemma}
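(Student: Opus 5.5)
We argue directly from the amplitude representation \eqref{eq:positive-amplitudes} of Corollary~\ref{cor:positive-shape}, as in the proof of Lemma~\ref{lem:positive-short}. The difference is that for $b\ge2a$ the integrand $1-\Phi_{a,b}(u-a)$ is needed on all of $[a,b]$, so $u-a$ ranges over $[0,b-a]\supset[0,a]$.

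\emph{The profile on $[0,b-a]$.} On $[0,a]$ we have $\Phi_{a,b}(v)=A_1e^{-rv}+A_2e^{sv}$ by \eqref{eq:positive-history}. On $[a,b]$, equation \eqref{eq:positive-profile-equation} gives $\Phi_{a,b}''=1$. Since $\Phi_{a,b}\in C^1$, on $[a,b-a]$ (nonempty because $b\ge2a$) we get
\[
 \Phi_{a,b}(v)=\Phi_{a,b}(a)+\Phi_{a,b}'(a)(v-a)+\tfrac12(v-a)^2 .
\]
Here $\Phi_{a,b}(a)=A_1e^{-ra}+A_2e^{sa}=A_1r^2+A_2s^{-2}$, using $r^2=e^{ra}$ and $s^2=e^{-sa}$. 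Similarly $\Phi_{a,b}'(a)=-rA_1r^2+sA_2s^{-2}$.

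\emph{The linear system for $(A_1,A_2)$.} Substituting this piecewise description into \eqref{eq:positive-amplitudes} splits each integral over $[a,b]$ into a piece over $[a,2a]$ with exponential integrand and a piece over $[2a,b]$ with polynomial-times-exponential integrand. Both are elementary, and together they give a $2\times2$ linear system for $(A_1,A_2)$. Its unique solvability follows exactly as the invertibility argument in Lemma~\ref{lem:positive-short}, using uniqueness in Lemma~\ref{lem:positive-operator}.

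To keep the algebra manageable, I would not evaluate the integrals naively. Instead I would integrate by parts against the adjoint modes $e^{ru}$ and $e^{-su}$. For any $C^1$ function with locally absolutely continuous derivative, the integral $\int e^{ru}[\Phi''(u)-\Phi(u-a)]\dd u$ reduces to boundary terms, because $e^{ru}$ solves the adjoint equation $y''(u)=y(u+a)$; the same holds for $e^{-su}$. Using $\Phi''=1$ on $[a,b]$, the exponential pieces collapse. The characteristic identities $r^2=e^{ra}$, $s^2=e^{-sa}$, $c_+=r/(2-ar)$ and $c_-=-s/(2+as)$ should then make the transcendental terms cancel in the sum $A_1+A_2$. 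What should survive is a polynomial in $b$ whose coefficients involve only $rs$ and $d=1/s-1/r$.

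I expect this cancellation, i.e.\ arriving at exactly the form \eqref{eq:positive-long-profile}, to be the main obstacle. As a consistency check I would use continuity at $b=2a$ against the matrix formula \eqref{eq:positive-matrix} with $\ell=a$. I would also check that $\partial_b\mathcal E(a,b)=rs(b-2a+2d)>0$ agrees with the monotonicity in Proposition~\ref{prop:positive-crossing}.

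\emph{The consequences.} The inequality $\beta(a)\ge2a$ holds iff $\mathcal E(a,2a)\le1$, by the monotonicity in Proposition~\ref{prop:positive-crossing}. Substituting $b=2a$ into \eqref{eq:positive-long-profile} gives $2rsd^2+rs(1-d^2)-1\le1$, which is $rs(1+d^2)\le2$.

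In that case we solve $\mathcal E(a,b)=1$, i.e.\ $(b-2a+2d)^2=4/(rs)-2+2d^2$. We must take the positive square root. Indeed, $r^2=e^{ra}>1$ gives $r>1$ and $s^2=e^{-sa}<1$ gives $s<1$, so $d>0$, and hence $b-2a+2d>0$ for all $b\ge2a$. This gives \eqref{eq:positive-explicit}.
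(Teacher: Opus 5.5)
Your treatment of the consequences is correct: $\mathcal E(a,2a)=rs(1+d^2)-1$, the equivalence with $\beta(a)\ge2a$ via Proposition~\ref{prop:positive-crossing}, and the choice of the positive root ($r>1>s$ gives $d>0$, so $b-2a+2d>0$). The gap is that the formula \eqref{eq:positive-long-profile}, which is the whole content of the lemma, is never derived. You set up a $2\times2$ system for $(A_1,A_2)$ and then say the transcendental terms ``should'' cancel, which you yourself flag as the main obstacle. The checks at $b=2a$ and on the sign of $\partial_b\mathcal E$ do not replace the derivation. The one explicit step you give also contains a slip: $e^{-ra}=r^{-2}$, so $\Phi(a)=A_1r^{-2}+A_2s^{-2}$ and $\Phi'(a)=-A_1/r+A_2/s$, where $\Phi=\Phi_{a,b}$. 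Finally, integrating by parts against $e^{ru}$ or $e^{-su}$ does not leave only endpoint values. Because of the shift, it leaves history integrals over $[0,a]$ and $[b-a,b]$; what you get is the transport identity \eqref{eq:PSS-J-transport}.

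That observation is exactly the missing idea. The history on $[0,a]$ is two-mode, so its contribution is $\Delta'(-r)A_1$, respectively $\Delta'(s)A_2$. Since $\Delta'(-r)=-r(2-ar)$ and $\Delta'(s)=s(2+as)$, the two equations \eqref{eq:positive-amplitudes} are equivalent to $J_{-r}[\Phi](b)=J_s[\Phi](b)=0$. The paper gets these directly from the decay \eqref{eq:positive-profile-decay}, by multiplying $\Phi''(t)=\Phi(t-a)$ on $[b,\infty)$ by $e^{z(t-b)}$ with $z\in\{r,-s\}$. Now change unknowns from $(A_1,A_2)$ to $Y=\Phi(b)$ and $P=\Phi'(b)$. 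For $b\ge2a$ the window $[b-a,b]$ lies inside the plateau, where $\Phi(u)=Y+P(u-b)+\frac12(u-b)^2$. Using $z^2=e^{az}$, the two conditions become
\[
 Y-P\Bigl(a+\frac1z\Bigr)=1-\frac{a^2}{2}-\frac az-\frac1{z^2},\qquad z\in\{r,-s\}.
\]
Hence $P=a-d$ and $Y=1+\frac{a^2}{2}-ad-\frac1{rs}$, with no amplitudes involved. Taylor expansion on the plateau gives $\Phi'(a)=2a-b-d$ and $\Phi(a)=Y-(a-d)(b-a)+\frac12(b-a)^2$. Then \eqref{eq:positive-history} gives $\mathcal E(a,b)=rs\Phi(a)+(s-r)\Phi'(a)=rs\bigl[\Phi(a)-d\,\Phi'(a)\bigr]$, which expands to \eqref{eq:positive-long-profile}. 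With these steps added, your outline becomes a complete proof, and it then coincides with the paper's.
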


\begin{proof}
Write $\Phi=\Phi_{a,b}$. For $z\in\{r,-s\}$, multiply
$\Phi''(t)=\Phi(t-a)$ on $[b,\infty)$ by $e^{z(t-b)}$ and integrate.
The decay in Corollary~\ref{cor:positive-shape} justifies integration by parts at infinity. Thus
\[
 \int_b^\infty e^{z(t-b)}\Phi''(t)\dd t
 =z\Phi(b)-\Phi'(b)
 +z^2\int_b^\infty e^{z(t-b)}\Phi(t)\dd t.
\]
Using $\Phi''(t)=\Phi(t-a)$, the change of variables $u=t-a$, and $z^2=e^{az}$, the common
contribution from $[b,\infty)$ cancels, giving
\begin{equation*} z\Phi(b)-\Phi'(b)
 =z^2\int_{b-a}^b e^{z(u-b)}\Phi(u)\dd u.\end{equation*}
Since $b\ge2a$, one has $\Phi''=1$ on $[b-a,b]$. Evaluating the last
identity for $z=r,-s$ yields
\[
 \Phi'(b)=a-d,
 \qquad
 \Phi(b)=1+\frac{a^2}{2}-ad-\frac1{rs}.
\]
Again using $\Phi''=1$ on $[a,b]$,
\[
 \Phi'(a)=2a-b-d,
 \qquad
 \Phi(a)=\Phi(b)-(a-d)(b-a)+\frac{(b-a)^2}{2}.
\]
The history representation \eqref{eq:positive-history}, together with
$e^{-ra}=r^{-2}$ and $e^{sa}=s^{-2}$, gives
\[
 \mathcal E(a,b)=rs\Phi(a)+(s-r)\Phi'(a).
\]
This proves \eqref{eq:positive-long-profile}.
At $b=2a$ this gives $\mathcal E(a,2a)=rs(1+d^2)-1$, so
Proposition~\ref{prop:positive-crossing} yields
$\beta(a)\ge2a$ exactly when $rs(1+d^2)\le2$. In this case,
$\mathcal E(a,\beta(a))=1$ and \eqref{eq:positive-long-profile} give
\[
 (\beta(a)-2a+2d)^2=2d^2+\frac4{rs}-2,
\]
and the positive root is \eqref{eq:positive-explicit}.
\end{proof}

\begin{lemma}
\label{lem:positive-boundary} The boundary $\beta$ extends to a continuous
strictly decreasing function on $[0,2/e]$, with $\beta(0)=\sqrt2$ and $%
\beta(2/e)=2/e$.
\end{lemma}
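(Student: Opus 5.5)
We need three things for $\beta$ on $(0,2/e)$: continuity, strict decrease, and the two endpoint limits. We then set $\beta(0)=\sqrt2$ and $\beta(2/e)=2/e$.

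\emph{Strict decrease.} We use the monotonicity of $\mathcal E$ in $a$. Fix $b$ and take $0<a_1<a_2<2/e$ with $b\ge a_2$, and write $\Phi_i=\Phi_{a_i,b}$.

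Suppose $\mathcal E(a_1,b)\le1$ (the case $\mathcal E(a_1,b)>1$ is handled below). By Corollary~\ref{cor:positive-shape}, $\Phi_1$ is positive, decreasing, and satisfies $\Phi_1''=1\ge\Phi_1(0)$ on $[a_1,b]$. For $t\notin[a_1,b]$ we have $\Phi_1''(t)=\Phi_1(t-a_1)\le\Phi_1(t-a_2)$. On $[a_2,a_1)$ this is vacuous, and on $[a_1,a_2)$ we have $\Phi_1''=1\ge\Phi_1(0)$. Hence $\Phi_1$ is a positive decreasing solution of \eqref{eq:positive-comparison} with parameters $(a_2,b)$, modulo forcing on the set $[a_1,a_2)$. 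Since the forcing there is $1\ge \Phi_1(t-a_2)$ only when $t-a_2\ge0$, we instead compare directly: $\Phi_1\ge \mathsf T\mathbf1_{[a_2,b]^c}\mathsf S_{a_2}\Phi_1+\Phi_1(0)\mathsf T\mathbf1_{[a_2,b]}$, using $\Phi_1(t-a_1)\ge\Phi_1(0)$ on $[a_1,a_2)$ (recall $t-a_1<a_2-a_1$, so possibly $t-a_1<0$ is needed; I would restrict to the region where it holds, or rerun the argument with the shorter interval).

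Corollary~\ref{cor:positive-supersolution} then gives $\mathcal E(a_2,b)\le1$. So $\{b:\mathcal E(a_1,b)\le1\}\subset\{b:\mathcal E(a_2,b)\le1\}$, i.e.\ $\beta(a_1)\le\beta(a_2)$. That is the wrong direction, so the inequality must be reversed. The correct picture is that a larger $a$ strengthens the shift feedback, $\Phi(t-a_2)\ge\Phi(t-a_1)$. So I would show that $\Phi_{a_2,b}$ is a supersolution for the $a_1$ problem, which yields $\mathcal E(a_2,b)\le1\Rightarrow\mathcal E(a_1,b)\le1$ and hence $\beta(a_2)\le\beta(a_1)$.

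For strictness, Proposition~\ref{prop:positive-crossing} gives $\mathcal E(a_2,\beta(a_2))=1$. I would then show $\mathcal E(a_1,\beta(a_2))<1$ using the strict positivity in the Neumann series, exactly as in the proof of Proposition~\ref{prop:positive-crossing}. The strict increase of $b\mapsto\mathcal E(a_1,b)$ then gives $\beta(a_1)>\beta(a_2)$.

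\emph{Continuity.} Fix $b$. The map $a\mapsto\mathcal E(a,b)$ is continuous, since $r,s$ depend continuously on $a$ (Lambert $W_0$) and the Neumann series converges uniformly in $X_\lambda$ with a uniform $\lambda$ on compacts. Continuity of $\beta$ then follows because $b\mapsto\mathcal E(a,b)$ is strictly increasing through $1$. In the regime $\beta\ge2a$ continuity is immediate from \eqref{eq:positive-explicit}; in the other regime it comes from the implicit equation $\Psi_a(\ell_a)=0$ in Lemma~\ref{lem:positive-short}.

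\emph{Endpoints.} As $a\to0$ we have $r,s\to1$ with $r=1+a/2+O(a^2)$ and $s=1-a/2+O(a^2)$, so $d\to0$ and $rs\to1$. Then $rs(1+d^2)\le2$, and \eqref{eq:positive-explicit} gives $\beta(a)\to\sqrt{4-2}=\sqrt2$.

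As $a\to2/e$, the roots $r$ and $-s$ stay separate ($r\to e$, $s\to 2W_0(e/2)\cdot e/2$), so the pole does not coalesce. Here $\beta<2a$ eventually, and we must show $\ell_a\to0$. I would use Proposition~\ref{prop:autonomous-liminf}(2): for $a'>2/e$ the autonomous equation admits no long positive decreasing stretch. If $\beta(a_n)-a_n\ge\delta>0$ along $a_n\to2/e$, a limiting argument on the normalized profiles $\Phi_{a_n,\beta(a_n)}$ should give a positive decreasing solution for delay close to $2/e$ with a plateau of length $\delta$; a compactness argument then contradicts the threshold.

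I expect this endpoint $a\to2/e$ to be the main obstacle. There, $\lambda=2/a$ barely satisfies $e^{\lambda a}/\lambda^2<1$, and the contraction bound degenerates, so uniform estimates are delicate. The fallback is to bound $\mathcal E(a,a+\ell)$ from below directly through the $2\times2$ system \eqref{eq:positive-matrix}, expanding in $\ell$. This should show $\mathcal E(a,a+\ell)\to\infty$, or at least $>1$, for fixed $\ell$ as $a\to2/e$, since $c_+=r/(2-ar)$ blows up when $ar\to2$.
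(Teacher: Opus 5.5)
\paragraph{Gap at the endpoint $a\to(2/e)^-$.} Most of your plan is sound. After your self-correction, the monotonicity argument is the paper's: take $X=\Phi_{a_2,b_2}$ with $b_2=\beta(a_2)$ and check that it is a supersolution of the $(a_1,b_2)$ comparison problem. Your worry about $[a_1,a_2)$ disappears in this direction, because there $X''(t)=X(t-a_2)>X(0)=1$. Do not attribute strictness vaguely to ``strict positivity in the Neumann series''. It comes from the slack $X(t-a_2)>X(t-a_1)$ on $(b_2,\infty)$, which after two integrations gives $1=X(0)\ge\mathcal E(a_1,b_2)+h(0)$ with $h(0)>0$. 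For continuity on $(0,2/e)$, continuity of $a\mapsto\mathcal E(a,b)$ at the fixed values $b=\beta(a_*)\pm\varepsilon$, together with the crossing property, is what the paper uses. Your limit $a\to0^+$ via \eqref{eq:positive-explicit} uses $r,s\to1$, $d\to0$ and $rs(1+d^2)\to1$. It is a valid and shorter alternative to the paper's route, which computes $\mathcal E(0,b)=b^2/2$ and extends continuity of $\mathcal E$ to $a=0$.

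The genuine gap is the endpoint $a\to(2/e)^-$, and your primary plan would not work there. A compactness limit of $\Phi_{a_n,\beta(a_n)}$ lives at delay exactly $2/e$, where positive decreasing solutions exist (e.g.\ $e^{-et}$). Proposition~\ref{prop:autonomous-liminf}(2) concerns delays $d>2/e$ sustained over an interval of length $L(d)$, not a single plateau of length $\delta$. So nothing is contradicted unless you separately prove that no positive decaying profile with a positive-length plateau exists at $a=2/e$, and that is the whole content.

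The relevant degeneration is also not about $r$ and $-s$. The two positive roots $r<R$ of $q^2=e^{aq}$ merge at $e$ (Lemma~\ref{lem:PSS-rR-properties}). Hence $2-ar\to0^+$ and the residue $1/(r(2-ar))$ in \eqref{eq:positive-Green-modes} blows up. (Also, $s\to eW_0(1/e)$.)

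Your fallback names the right quantity $c_+$, but \eqref{eq:positive-matrix} will not give $\mathcal E(a,a+\ell)\to\infty$. Dividing its first row by $c_+$ leaves a limiting system with determinant at least $\ell$, so $A_1,A_2$ stay bounded and you would still owe a proof that the limit exceeds $1$.

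The missing step is to work at the critical value. Let $X=\Phi_{a,\beta(a)}$, so $X(0)=1$. The forcing is at least $1$ on all of $[0,\beta(a)]$; on $[0,a]$ this holds because $X(t-a)\ge X(0)$. Hence $-X'(u)\ge\beta(a)-u$, and so $1-X(v)\ge v^2/2$ for $0\le v\le\beta(a)$. Evaluating \eqref{eq:positive-excess} at $0$ then gives
\[
 1=\int_0^{\beta(a)-a}G_a(a+v)\bigl[1-X(v)\bigr]\dd v\ge\int_0^{\beta(a)-a}G_a(a+v)\frac{v^2}{2}\dd v .
\]
Since $G_a(a+v)\to\infty$ uniformly on compact subsets of $(0,\infty)$, $\beta(a)-a$ cannot stay bounded below along any sequence. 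Therefore $\beta(a)\to2/e$.
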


\begin{proof}
\proofpart{I}{Continuity on $(0,2/e)$}
Fix $a_*\in(0,2/e)$ and use the common weight $\lambda=2/a_*$.
For $a$ near $a_*$, the bound $e^{\lambda a}/\lambda^2\le q<1$
holds with fixed $\lambda$ and $q$. The weighted integral operators of
Lemma~\ref{lem:positive-operator} depend continuously on $a,b$ in operator
norm, and $\mathsf T\mathbf1_{[a,b]}$ depends continuously on them in
$X_\lambda$. The uniformly convergent Neumann series therefore makes
$\mathcal E(a,b)$ jointly continuous. Set $b_*=\beta(a_*)$. For every sufficiently
small $\varepsilon>0$, Proposition~\ref{prop:positive-crossing} gives
\[
 \mathcal E(a_*,b_*-\varepsilon)<1<\mathcal E(a_*,b_*+\varepsilon).
\]
By continuity these strict inequalities persist for $a$ near $a_*$.
The same proposition then gives
$b_*-\varepsilon<\beta(a)<b_*+\varepsilon$, proving continuity of
$\beta$ on $(0,2/e)$.

\proofpart{II}{Strict monotonicity}
Fix $0<a_1<a_2<2/e$, set $b_2=\beta(a_2)$, and set
$X=\Phi_{a_2,b_2}$. Then $X(0)=1$, $X$ is strictly decreasing, and
\[
 X''(t)=\mathbf1_{[a_2,b_2]^c}(t)X(t-a_2)
       +\mathbf1_{[a_2,b_2]}(t).
\]
Define
\[
 \delta(t)=X''(t)-\mathbf1_{[a_1,b_2]^c}(t)X(t-a_1)
             -\mathbf1_{[a_1,b_2]}(t).
\]
Since $X$ is strictly decreasing and $X(0)=1$, we have almost everywhere
\[
\begin{aligned}
 \delta(t)
 &=\mathbf1_{[a_1,b_2]^c}(t)
   \bigl[X(t-a_2)-X(t-a_1)\bigr]\\
 &\quad+\mathbf1_{[a_1,a_2]}(t)\bigl[X(t-a_2)-1\bigr]
 \ge0.
\end{aligned}
\]
In particular, $\delta(t)>0$ almost everywhere on $(b_2,\infty)$.
Since $X$ and $X'$ decay exponentially, two integrations give
\[
 X=\mathsf T\mathbf1_{[a_1,b_2]^c}\mathsf S_{a_1}X+\mathsf T\mathbf1_{[a_1,b_2]}+h,
 \qquad h(t)=\int_t^\infty(u-t)\delta(u)\dd u.
\]
Here $h\ge0$ and $h(0)>0$. Bootstrapping and discarding nonnegative terms yield
\[
 X\ge\sum_{j=0}^N(\mathsf T\mathbf1_{[a_1,b_2]^c}\mathsf S_{a_1})^{\,j}\mathsf T\mathbf1_{[a_1,b_2]}+h,
 \qquad N\ge0.
\]
Letting $N\to\infty$ and evaluating at zero, we obtain
\[
 1=X(0)\ge \mathcal E(a_1,b_2)+h(0)>\mathcal E(a_1,b_2).
\]
Proposition~\ref{prop:positive-crossing} therefore gives
$b_2<\beta(a_1)$, so $\beta(a_2)<\beta(a_1)$.

\proofpart{III}{Continuity at $0$}
The formulas in Corollary~\ref{cor:positive-shape} and
$W_0(z)/z\to1$ as $z\to0$ give $r,s\to1$ as $a\to0^{+}$.
At $a=0$, fix $\lambda>1$. Since $\mathsf S_0=I$ and
$\|\mathsf T\mathbf1_{[0,b]^c}\|_\lambda\le\lambda^{-2}<1$, the same
positive Neumann series defines $\Phi_{0,b}$. Its first term
$\mathsf T\mathbf1_{[0,b]}$ vanishes for $t\ge b$ and equals
$(b-t)^2/2$ on $[0,b]$. All subsequent terms vanish for $t\ge0$:
the integral only uses $u\ge t\ge0$, the indicator removes $[0,b]$,
and the preceding term vanishes for $u\ge b$. Consequently,
$\mathcal E(0,b)=b^2/2$.

For sufficiently small $a\ge0$, the bound
$e^{\lambda a}/\lambda^2\le q<1$ holds with fixed $\lambda$ and $q$.
As in Part~I, continuity of the weighted integral operators and their
resolvents gives continuity of $\mathcal E$ up to $a=0$. For any sufficiently
small $\varepsilon>0$,
\[
 \mathcal E(0,\sqrt2-\varepsilon)<1<\mathcal E(0,\sqrt2+\varepsilon).
\]
These inequalities persist for sufficiently small positive $a$.
Applying Proposition~\ref{prop:positive-crossing} at those positive values
of $a$ gives
$\sqrt2-\varepsilon<\beta(a)<\sqrt2+\varepsilon$. Thus
\[
 \lim_{a\to0^{+}}\beta(a)=\sqrt2,
\]
and defining $\beta(0)=\sqrt2$ gives the continuous extension.

\proofpart{IV}{Continuity at $2/e$}
For the other endpoint, let $X=\Phi_{a,\beta(a)}$. Since $X(0)=1$
and the forcing is at least one on $[0,\beta(a)]$, we have
\[
 -X'(u)=\int_u^\infty X''(t)\dd t
 \ge\int_u^{\beta(a)}1\dd t
 =\beta(a)-u,
 \qquad 0\le u\le\beta(a).
\]
Consequently, for $0\le v\le\beta(a)$,
\[
 1-X(v)=\int_0^v[-X'(u)]\dd u
 \ge\beta(a)v-\frac{v^2}{2}
 \ge\frac{v^2}{2}.
\]
Equation \eqref{eq:positive-excess} at zero therefore gives
\begin{equation}  \label{eq:positive-critical-integral}
1=\int_0^{\beta(a)-a}G_a(a+v)[1-X(v)]\dd v \ge\int_0^{\beta(a)-a}G_a(a+v)\frac{%
v^2}{2}\dd v.\end{equation}
By the formulas in Corollary~\ref{cor:positive-shape}, continuity of $W_0$
and $W_0(-1/e)=-1$ give, as $a\to(2/e)^{-}$,
\[
 r\to e^{-},\qquad 2-ar\to0^{+},\qquad
 s\to eW_0(1/e)>0,\qquad 2+as\to2+2W_0(1/e)>0.
\] By \eqref{eq:positive-Green-modes}, $%
G_a(a+v)\to\infty$ uniformly on every compact subinterval of $(0,\infty)$. If
$\beta(a)-a$ were bounded below by $\varepsilon>0$ along a sequence, the integral
on $[\varepsilon/2,\varepsilon]$ in \eqref{eq:positive-critical-integral}
would diverge. Thus $\beta(a)-a\to0$ and
\begin{equation*}\lim_{a\to(2/e)^{-}}\beta(a)=2/e.\end{equation*}
\end{proof}

\begin{remark}
By Lemma~\ref{lem:positive-boundary}, there is a unique
$a_{\mathrm c}\in(0,2/e)$ such that
\[
 \beta(a_{\mathrm c})=2a_{\mathrm c}.
\]
For $0<a\le a_{\mathrm c}$, the explicit formula
\eqref{eq:positive-explicit} applies. For $a_{\mathrm c}<a<2/e$, one has
$\beta(a)=a+\ell_a$, where $\ell_a\in(0,a)$ is the unique root of the
scalar equation $\Psi_a(\ell)=0$ in Lemma~\ref{lem:positive-short}.
The transition is characterized by $rs(1+d^2)=2$, with
$d=1/s-1/r$ and $r,s$ corresponding to $a=a_{\mathrm c}$.
Numerically,
\[
 a_{\mathrm c}\approx0.6307116580,
 \qquad
 \beta(a_{\mathrm c})=2a_{\mathrm c}\approx1.2614233161.
\]
\end{remark}

\subsection{Proof of the oscillation bound and sharpness}
\label{subsec:positive-proof-sharpness}

We now prove the main results of this section. Assuming the existence of an eventually positive decreasing solution, one can construct a global positive decreasing solution $z$. The comparison between $z$ and $\Phi$, as in Corollaries~\ref{cor:rank-one-feedback} and~\ref{cor:positive-supersolution}, then follows directly. However, such a global construction would require further asymptotic estimates, of the type we shall later develop in Section~\ref{sec:spectral-positive}. Here we instead give a self-contained proof, showing that these refined asymptotic estimates are not needed for the oscillation bound. As for sharpness, only a mild spectral separation is required (see the proof of Theorem~\ref{thm:positive-sharpness} below).

\begin{proof}[Proof of Theorem~\ref{thm:positive-main}]
Suppose an eventually positive decreasing solution exists. The classification in~\cite{Kamenskii,Ladde},
 gives $x,x'\to0$. Two integrations and the Fubini-Tonelli theorem yield
\begin{equation}\label{eq:positive-final-integral}
 x(t)=\int_t^\infty(u-t)x(\tau(u))\dd u.\end{equation}
The case $a=0$ follows from the classical \(\limsup\) criterion
\cite{Gustafson}, \cite{Ladde} (see
Proposition~\ref{prop:BDS16}). Proposition~\ref{prop:autonomous-liminf}
excludes $a>2/e$. Suppose therefore that $0<a\le2/e$ and, toward a
contradiction, that $b>\beta(a)$. By continuity of $\beta$, choose
\[
 0<a_0<a,\qquad a_0<b_0<b,
 \qquad b_0>\beta(a_0).
\]
Since $\mathcal E(a_0,b_0)>1$ and the defining Neumann series for
$\Phi_{a_0,b_0}$ is increasing, some finite partial sum already exceeds one:
\begin{equation}\label{eq:positive-finite}
 \sum_{j=0}^N
 \bigl[(\mathsf T\mathbf1_{[a_0,b_0]^c}\mathsf S_{a_0})^{\,j}
 \mathsf T\mathbf1_{[a_0,b_0]}\bigr](0)>1.\end{equation}
Fix such an $N$. Choose $t_*$ so that $x$ is positive and decreasing
on $[t_*,\infty)$ and
\[
 t-\tau(t)\ge a_0\qquad(t\ge t_*).
\]
Choose a time $T$ with $T-\tau(T)\ge b_0$, and choose $T$ sufficiently
late that
\[
 \tau(T)-(N+1)a_0\ge t_*,\qquad
 \tau\bigl(\tau(T)-Na_0\bigr)\ge t_*.
\]
Define
\[
 z(t)=\frac{x(\tau(T)+t)}{x(\tau(T))},
 \qquad t\ge t_*-\tau(T),
 \qquad z(0)=1.
\]
For $u\ge-Na_0$, monotonicity of $x$ gives
\[
 \frac{x(\tau(\tau(T)+u))}{x(\tau(T))}\ge z(u-a_0).
\]
For $u\in[a_0,b_0]$, monotonicity gives
$\tau(\tau(T)+u)\le\tau(T)$, so this ratio is at least one. Thus
\eqref{eq:positive-final-integral} gives, for $t\ge-Na_0$,
\begin{equation}\label{eq:positive-finite-comparison}
 z(t)\ge g(t)+\int_t^\infty(u-t)\mathbf1_{[a_0,b_0]^c}(u)
 z(u-a_0)\dd u,
 \qquad g=\mathsf T\mathbf1_{[a_0,b_0]}.\end{equation}
Define inductively
\[
 v_0=g,\qquad
 v_{k+1}(t)=g(t)+\int_t^\infty(u-t)\mathbf1_{[a_0,b_0]^c}(u)
 v_k(u-a_0)\dd u.
\]
Positivity first gives $z\ge v_0$ on $[-Na_0,\infty)$. If
$z\ge v_k$ on $[-(N-k)a_0,\infty)$, substitution into
\eqref{eq:positive-finite-comparison} gives
$z(t)\ge v_{k+1}(t)$. Bootstrapping yields
\[
 z(t)\ge v_k(t)\quad\text{for }t\ge-(N-k)a_0,
 \qquad 0\le k\le N.
\]
Evaluating at zero and using \eqref{eq:positive-finite}, we obtain
\[
 1=z(0)\ge v_N(0)
 =\sum_{j=0}^N
 \bigl[(\mathsf T\mathbf1_{[a_0,b_0]^c}\mathsf S_{a_0})^j
 \mathsf T\mathbf1_{[a_0,b_0]}\bigr](0)>1,
\]
a contradiction. Therefore $b\le\beta(a)$.
\end{proof}

The following sharpness proof uses a gliding-hump construction of multipulse type, cf.\ \cite{BanachSteinhaus1927,DickmeisNessel1981,Feroe1981,Lebesgue1905,Sandstede1998,ZelikMielke2009}.

\begin{theorem}
\label{thm:positive-sharpness}
For every $a\in[0,2/e]$ and $b\in[a,\beta(a)]$, there is a nondecreasing
argument $\tau$ with
$\tau(t)\le t$ and $\tau(t)\overset{t\to\infty}{\to}\infty$ for which
\[
 x''(t)=x(\tau(t))
\]
admits a strictly positive decreasing solution with
\[
 \liminf_{t\to\infty}(t-\tau(t))=a,\qquad
 \limsup_{t\to\infty}(t-\tau(t))=b.
\]
\end{theorem}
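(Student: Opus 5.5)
We will build $\tau$ and the solution $x$ piece by piece, using a gliding-hump (multipulse) scheme. The solution is made of long stretches on which the delay is the constant $a$, and the solution there is essentially a positive decaying mode of $y''(t)=y(t-a)$. These stretches are separated by infrequent ``humps'' on which $\tau$ is frozen at a constant value, so that the delay grows from $a$ up to $b$.

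We start with the degenerate cases. If $b=a<2/e$, take $\tau(t)=t-a$ and $x(t)=e^{-rt}$, which solves the equation exactly because $r^2=e^{ar}$. If $a=b=2/e$, the same choice works with $r=e$, the double root. The real work is the case $b>a$, together with the endpoint $a=0$ and the boundary $b=\beta(a)$. For $a>0$ and $a<b\le\beta(a)$, Proposition~\ref{prop:positive-crossing} gives $\mathcal E(a,b)\le1$, and Corollary~\ref{cor:positive-shape} shows that $\Phi_{a,b}$ is positive and strictly decreasing. It satisfies \eqref{eq:positive-profile-equation} and, by \eqref{eq:positive-history}, equals $A_1e^{-rt}+A_2e^{st}$ for $t\le a$.

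The key observation is the following. On a single pulse, take $\tau(t)=t-a$ off $[a,b]$ and $\tau(t)=0$ on $[a,b]$. This $\tau$ is nondecreasing, since $t-a\le0$ at $t=a$ and $t-a\ge0$ at $t=b$. Then $\Phi_{a,b}/\mathcal E(a,b)$ satisfies $x''\ge x(\tau(t))$, with equality off $[a,b]$, and on $[a,b]$ the forcing is $1/\mathcal E(a,b)\ge1=x(0)/\mathcal E$.

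To get an exact solution rather than a supersolution, we use the freedom in $\tau$. On $[a,b]$ we choose a nondecreasing $\tau$ with values in $[\tau_{\min},0]$ such that $x(\tau(t))$ equals the required forcing. When the forcing is strictly larger than $x(0)$, this means letting $\tau$ dip slightly negative. The liminf/limsup are unaffected as long as $t-\tau(t)$ stays in $[a,b]$ and equals $b$ at $t=b$. Alternatively, in the borderline $\mathcal E=1$ the supersolution is already exact.

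Next we glue the pulses into an infinite sequence. The pulse profile has an unstable component $A_2e^{st}$ in its history for $t\le a$. To concatenate, we place pulse $k$ at a time $T_k$ with very long gaps $T_{k+1}-T_k\to\infty$. During a gap the delay is $a$, and the solution is governed by $y''=y(t-a)$, whose decaying dynamics are dominated by $e^{-rt}$, while $e^{st}$ grows. This forces us to choose the state entering each pulse so that it matches the pulse's required history $A_1e^{-rt}+A_2e^{st}$ up to a small error.

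We handle this by a shooting/fixed-point argument backward in time. We define the solution on $[T_k-\text{gap},\infty)$ by the operator formulation $x=\mathsf T(\text{forcing})$ with decay at infinity, as in Lemma~\ref{lem:terminal-green}. We then use the Neumann-series contraction in $X_\lambda$ from Lemma~\ref{lem:positive-operator} to show that the perturbations from all later pulses are exponentially small: they are damped by the factor $e^{-r(T_{k+1}-T_k)}$ relative to the scale $x(T_k)$. This is the ``mild spectral separation'' mentioned above, namely $-r<0<s$, i.e.\ the gap between the decaying and growing modes.

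In the concatenation, positivity and monotonicity follow because every forcing term is positive, so $x=\mathsf T(\cdot)>0$ and $x'<0$. The small mismatches are absorbed into the slack in $\tau$ on the hump intervals. To get exactly $\limsup=b$ while $\mathcal E(a,b)=1$ leaves no slack, we use pulses with $b_k\uparrow b$. Then $\mathcal E(a,b_k)<1$ leaves room, and the limsup is still $b$. Similarly, we let the gap delay be $a$ exactly, or $a_k\downarrow a$ when $a=2/e$ or $a=0$.

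The $a=0$ endpoint needs separate care, since $\tau(t)=t$ off the humps. We treat it as a limit by taking $a_k\to0$ between pulses, so that $\liminf=0$.

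The main obstacle I expect is the gluing estimate: quantifying that the error from truncating each pulse's history and replacing it with the tail of the previous pulse can be absorbed. Concretely, the required perturbation of $\tau$ on each hump must stay within $[a,b_k]$ delay bounds and keep $\tau$ nondecreasing. I would first try choosing $b_k<\beta(a)$ strictly, so that $\mathcal E(a,b_k)<1$ gives a uniform margin $1-\mathcal E(a,b_k)$. I would then choose the gaps so large that the exponentially small mismatches $O(e^{-\min(r,s)\cdot\text{gap}})$ fall below this margin, taken relative to the local scale $x(T_k)$.
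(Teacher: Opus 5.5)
Your overall architecture matches the paper's gliding-hump proof: sparse humps, plateau lengths $b_k\uparrow b$, and the uniform slack $1-\mathcal E(a,b_k)>0$. The degenerate cases are also fine. The step that is supposed to turn this into an \emph{exact} solution fails, however. Absorbing the slack by letting $\tau$ dip below the frozen value on the hump violates the constraints on $\tau$. Just before the hump, $\tau(t)=t-a$ increases up to exactly the frozen value ($0$ in your single-pulse normalization), so any value $\tau(t)<0$ with $t\ge a$ destroys monotonicity. It also makes $t-\tau(t)>b$ at the end of the hump. For $b<\beta(a)$ this excess, namely $-\tau_0$ with $\Phi_{a,b_k}(\tau_0)=1$, does not even tend to zero. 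The same objection applies to ``absorbing the small mismatches into the slack in $\tau$'' after gluing. Slack means $x''(t)>x(T_k)$ on the hump, which would require $\tau(t)<T_k$, whereas $\tau(t)=t-a\uparrow T_k$ as $t\uparrow T_k+a$. The missing idea is that you never need to build an exact solution by hand. By Proposition~\ref{prop:BDS17}, a positive nonincreasing $Z$ with $Z''\ge Z(\tau(t))$ already yields a positive nonincreasing solution, so the slack can simply stay in the inequality.

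The gluing itself is also not carried out. The backward shooting/fixed-point step is never specified; for the homogeneous problem, a contracting Neumann series only returns $x=0$. The matching problem you correctly identify is left open: each pulse needs the incoming history $A_1e^{-rt}+A_2e^{st}$ with $A_2<0$. If a solution enters a hump with an essentially pure $e^{-rt}$ history, it leaves with a positive unstable coefficient $c_0$ and eventually increases.

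The paper sidesteps the matching by linear superposition, $Z(t)=\sum_j e^{-kT_j}u_j(t-T_j)$ with $u_j=\Phi_{a,b_j}$ and $r<k<2/a$. Each global profile carries its own incoming history and solves the constant-delay equation off its own plateau. The envelope $e^{ks}(u_j+|u_j'|+u_j'')\le Ce^{-\delta|s|}$ then lets sparse $T_j$ bound the cross terms on the $j$-th hump by $\tfrac12e^{-kT_j}\varepsilon_j$, and positivity and monotonicity of $Z$ are automatic. Note that the weights must decay strictly faster than $e^{-rt}$. At your rate $e^{-r(T_{k+1}-T_k)}$, the backward tail $A_1e^{-r(t-T_{k+1})}$ of the next pulse is of the same order as the current pulse at $t=T_k$, not exponentially small.

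Your device $a_k\to0$ for $a=0$ is likewise incompatible with a single common $\tau$. The paper handles $a=0$ directly with explicit profiles for $y''=y$. If you do want an exact solution, the paper's second proof shows what it costs: keep $\tau=T_j$ on the hump, shorten the plateau so that the outgoing unstable coefficient equals $-\varepsilon$, and wait a recovery time until the mode ratio matches the next incoming profile.
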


\begin{proof}
If $a=2/e$, then $b=2/e$ and one may take $\tau(t)=t-2/e$ and
$x(t)=e^{-et}$. Thus assume $0\le a<2/e$. By Proposition~\ref{prop:BDS17}, it is enough to prove the endpoint case
$b=\beta(a)$.
Fix $b=\beta(a)$, and set
\[
 b_j=a+(b-a)(1-2^{-j}),\qquad I_j=[T_j+a,T_j+b_j],
\]
where the times $T_j$ will be chosen below. Prescribe
\begin{equation}\label{eq:sharp-plateau-delay}
 \tau(t)=\begin{cases}
 T_j,&t\in I_j,\\
 t-a,&t\notin\bigcup_{j\ge1}I_j
 \end{cases}.\end{equation}

\proofpart{I}{Construction and properties of $u_j$}
If $a>0$, let $u_j=\Phi_{a,b_j}$, with $\Phi_{a,b_j}$ as in
Definition~\ref{def:positive-profile}. Thus $u_j$ is positive and decreasing.
For $a=0$, use instead the following explicit profiles with $b_j<\sqrt2$:
\begin{equation}\label{eq:sharp-plateau-zero}
 u_j(s)=\begin{cases}
 \dfrac{b_j^2}{2}\cosh s-b_j\sinh s,&s\le0,\\
 \dfrac{(b_j-s)^2}{2},&0\le s\le b_j,\\
 0,&s\ge b_j
 \end{cases}.\end{equation}
In both cases $u_j$ is nonnegative and nonincreasing.
Define
\[
 \varepsilon_j:=1-u_j(0)>0.
\]
These profiles satisfy,
\begin{equation*} u_j''(s)=\begin{cases}
 1,&a\le s\le b_j,\\
 u_j(s-a),&s\notin[a,b_j]
 \end{cases}.\end{equation*}

There exist constants $k,\delta>0,C>1$, independent of $j$, such that
\begin{equation}\label{eq:sharp-plateau-envelope}
 e^{ks}\bigl(u_j(s)+|u_j'(s)|+u_j''(s)\bigr)
 \le C e^{-\delta|s|}\quad \forall s\in\mathbb R.\end{equation}
Here one can take $r<k<2/a$ for $a>0$, with $r$ the smaller positive
root of $r^2=e^{ar}$. For $a=0$ take $k>1$.

The comparison in \eqref{eq:positive-difference} gives
$0\le u_j\le\Phi_{a,b}$. Also $u_j''\le\Phi_{a,b}''$ (because for $s\in(b_j,b]$, we have $u_j(s-a)\le\Phi_{a,b}(s-a)\le1$).
Since $-u_j'(s)=\int_s^\infty u_j''(v)\dd v$, the derivatives are similarly dominated. The two-mode history \eqref{eq:positive-history} and the weighted
bound \eqref{eq:positive-profile-decay} therefore give \eqref{eq:sharp-plateau-envelope}. For $a=0$ the same conclusion follows directly
from \eqref{eq:sharp-plateau-zero}. This justifies \eqref{eq:sharp-plateau-envelope}.

\proofpart{II}{Gliding-hump supersolution}
Set $T_1=1$, and recursively define, for $j\ge2$,
\begin{equation*} T_j=1+\max\left\{
 T_{j-1}+b+1,\quad
 \max_{1\le i<j}\left[
 T_i+\frac1\delta\log\frac{C\,2^{i+j+1}}
 {\varepsilon_i\varepsilon_j}\right]\right\}.\end{equation*}
In particular the intervals $I_j$ are disjoint, and for every $i\ne j$,
\begin{equation*}%
 C e^{-\delta|T_i-T_j|}
 \le\varepsilon_i\varepsilon_j2^{-i-j-1}.\end{equation*}
Consequently,
\begin{equation}\label{eq:sharp-plateau-rows}
 C\sum_{i\ne j}e^{-\delta|T_i-T_j|}
 \le\varepsilon_j2^{-j-1}\sum_{i\ne j}\varepsilon_i2^{-i}
 \le\frac{\varepsilon_j}{2},\end{equation}
since $0<\varepsilon_i\le1$.

Take $c_j=e^{-kT_j}$ and
\begin{equation*}%
 Z(t)=\sum_{j\ge1}c_ju_j(t-T_j).\end{equation*}
Estimates \eqref{eq:sharp-plateau-envelope} and \eqref{eq:sharp-plateau-rows} ensure locally uniform convergence of the
function and first-derivative series and locally dominated convergence
of the second-derivative series. The definition of $u_j$ ensures that $Z>0$ and $Z'<0$.
Moreover,
\[
 e^{kt}Z(t)\le C\sum_j e^{-\delta|t-T_j|}\le C_1,
\]
where the last bound follows from the uniform separation of the
$T_j$. Thus $Z$ is bounded on $[0,\infty)$ and tends to zero.

We claim that $Z$ and the delay \eqref{eq:sharp-plateau-delay} satisfy
\begin{equation}\label{eq:sharp-plateau-supersolution}
 Z''(t)\ge Z(\tau(t)).\end{equation}
Outside the union of the $I_j$, every summand satisfies its constant-delay
equation, so equality holds. If $t\in I_j$, the only $u_i$ on its
exceptional interval is $u_j$, and
\begin{align*} Z''(t)-Z(T_j)
 &=c_j(1-u_j(0))
   +\sum_{i\ne j}c_i\bigl[u_i(t-T_i-a)-u_i(T_j-T_i)\bigr]\\
 &\ge c_j\varepsilon_j-\sum_{i\ne j}c_iu_i(T_j-T_i).\end{align*}
By \eqref{eq:sharp-plateau-envelope},
\[
 c_iu_i(T_j-T_i)
 \le c_j C e^{-\delta|T_i-T_j|}.
\]
Using \eqref{eq:sharp-plateau-rows}, we conclude that
\[
 Z''(t)-Z(T_j)\ge\tfrac12c_j\varepsilon_j>0.
\]
This proves \eqref{eq:sharp-plateau-supersolution}.

Applying Proposition~\ref{prop:BDS17} to the positive nonincreasing supersolution $Z$
completes the proof.
\end{proof}

\begin{theorem}
For every $0\le a\le2/e$ and $b\ge2/e$, there exists a nondecreasing
argument $\tau(t)\le t$, tending to infinity, such that
\[
 \liminf_{t\to\infty}(t-\tau(t))=a,\qquad
 \limsup_{t\to\infty}(t-\tau(t))=b,
\]
and $x''(t)=x(\tau(t))$ has no eventually positive nonincreasing solution.
\end{theorem}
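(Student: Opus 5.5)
The plan is to build $\tau$ from infinitely many long windows of \emph{constant} delay $d>2/e$, separated by shorter windows of delay $a$. Proposition~\ref{prop:autonomous-liminf}\textup{(2)} then rules out an eventually positive nonincreasing solution, since any such solution would have to survive one of these long constant-delay windows. Note that Theorem~\ref{thm:positive-main} cannot be used directly: $b\ge 2/e$ does not force $b>\beta(a)$.

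\emph{Choice of delays.} If $b>2/e$, set $d_n=b$ for all $n$. If $b=2/e$, choose $d_n\downarrow 2/e$ with $d_n>2/e$. Let $L_n=L(d_n)$ be the length given by Proposition~\ref{prop:autonomous-liminf}\textup{(2)}.

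\emph{Construction.} Choose times $s_1<\sigma_1<s_2<\sigma_2<\dots$ tending to infinity. On $[\sigma_{n-1},s_n]$, put $\tau(t)=t-a$; if $a=0$ this means $\tau(t)=t$, which is allowed. Take these intervals of length at least $1$, so that the delay $a$ is attained on sets of arbitrarily late times.

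At $s_n$ we pass from delay $a$ to delay $d_n\ge a$. To keep $\tau$ nondecreasing, hold it constant:
\[
\tau(t)=s_n-a,\qquad t\in[s_n,\ s_n-a+d_n].
\]
On this plateau $t-\tau(t)$ runs from $a$ to $d_n$. Then set
\[
\tau(t)=t-d_n,\qquad t\in[s_n-a+d_n,\ \sigma_n],
\qquad \sigma_n=s_n-a+d_n+L_n+d_n+1.
\]
At $\sigma_n$ the delay drops back to $a$, so $\tau$ jumps upward by $d_n-a\ge0$. This is compatible with $\tau$ being nondecreasing, and measurability is clear.

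With this choice, $t-\tau(t)\in[a,d_n]$ for all large $t$, and both values $a$ and $d_n$ are attained infinitely often. Since $d_n\to b$ in either case, we get $\liminf_{t\to\infty}(t-\tau(t))=a$ and $\limsup_{t\to\infty}(t-\tau(t))=b$. Also $\tau(t)\le t$ and $\tau(t)\to\infty$.

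\emph{Oscillation.} Suppose $x$ is eventually positive and nonincreasing, say on $[t_0,\infty)$. Pick $n$ with $s_n-a>t_0$. Put $\rho_n=s_n-a+d_n$ and $y(t)=x(t+\rho_n)$. On $[\rho_n,\sigma_n]$ the equation reads exactly $x''(t)=x(t-d_n)$, so $y''(t)=y(t-d_n)$ on an interval of length $L_n+d_n+1\ge L_n$. Moreover $y$ is positive and nonincreasing there, together with its history of length $d_n$, because that history lies in $[\rho_n-d_n,\rho_n]\subset[t_0,\infty)$. This contradicts Proposition~\ref{prop:autonomous-liminf}\textup{(2)}.

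\emph{Main obstacle.} The delicate point is the exact scope of Proposition~\ref{prop:autonomous-liminf}\textup{(2)}. The argument needs it for solutions with an \emph{arbitrary} positive nonincreasing history on an interval of length $d$, not only for global solutions. As I read it, this is what its formulation asserts: no positive nonincreasing stretch of length $L(d)$ together with its preceding history of length $d$. The interval lengths are taken generously precisely to absorb any ambiguity about whether the history must be included in the interval of length $L(d)$.

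If instead one only has the $\liminf$ form of that proposition, the fallback is to argue directly on the windows. Use the Wronskian criterion of Proposition~\ref{prop:BDS17}\textup{(2)} for $y''=y(t-d)$: for $d>2/e$ the characteristic equation $z^2=e^{-dz}$ has no real negative root. The two-point problem with $z(0)=1$, $z(\omega)=0$ therefore fails to have a positive decreasing solution for some finite $\omega$. Proposition~\ref{prop:BDS17}\textup{(2)}, applied on each long window after a time shift, would then give the same contradiction.
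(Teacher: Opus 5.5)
Your proposal is correct and follows essentially the paper's construction. As in the paper, you hold $\tau$ constant at $s_n-a$ on a plateau and then use a long window of constant delay $d_n>2/e$ of length at least $L(d_n)$ before dropping back to delay $a$, and the contradiction comes from Proposition~\ref{prop:autonomous-liminf}\textup{(2)}, which the paper uses in exactly the form you rely on. The differences are cosmetic: you take $d_n=b$ when $b>2/e$ and $d_n\downarrow 2/e$ when $b=2/e$ where the paper uses $b_j=b+1/j$ in all cases, and your windows are longer than necessary.
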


\begin{proof}
By Proposition~\ref{prop:autonomous-liminf}, for every $d>2/e$ there is a
finite length $L(d)$ such that no solution of
\[
 y''(t)=y(t-d)
\]
can remain positive and nonincreasing throughout an interval of length
$L(d)$ together with its preceding history of length $d$.

Take $b_j=b+1/j>2/e$ and choose these lengths $L_j=L(b_j)$.
Set $S_1=1$, $E_j=S_j+b_j-a+L_j$, and $S_{j+1}=E_j+1$.
Define
\[
 \tau(t)=
 \begin{cases}
 S_j-a,& S_j\le t<S_j+b_j-a,\\
 t-b_j,& S_j+b_j-a\le t<E_j,\\
 t-a,& t\notin\displaystyle\bigcup_{j\ge1}[S_j,E_j).
 \end{cases}
\]
The resulting argument $\tau$ is nondecreasing: it is constant on the first
piece of each block, increases with slope one on the second piece and between
blocks, and its only jumps are upward.
If an eventually positive nonincreasing solution existed, choose $j$ so large
that $x$ is positive and nonincreasing on $[S_j-a,E_j]$. On the interval
$[S_j+b_j-a,E_j]$, which has length $L_j$, the equation is
\[
 x''(t)=x(t-b_j),
\]
and its preceding history of length $b_j$ is positive and nonincreasing.
This contradicts the defining property of $L_j=L(b_j)$.
\end{proof}

\section{Spectral analysis of the oscillation bound}
\label{sec:spectral-positive}

\subsection{Description of positive decreasing solutions}
\label{subsec:spectral-description}

\begin{remark}\label{rem:spectral-first-order-reduction}
Consider the nonhomogeneous second-order equation
\[
 x''(t)=x(t-a)+f(t),\qquad 0<a<2/e.
\]
Let $\Delta(\lambda)=\lambda^2-e^{-a\lambda}$. Its positive real root $s$
and largest negative real root $-r$ satisfy
\[
 s^2=e^{-as},\qquad r^2=e^{ar},\qquad 0<s<r<2/a.
\]
For a characteristic root $\lambda$, define
\begin{equation*} J_\lambda[x](t):=x'(t)+\lambda x(t)
 +e^{-a\lambda}\int_{-a}^0e^{-\lambda\theta}x(t+\theta)\dd\theta.\end{equation*}
Differentiation of the integral, using $e^{-a\lambda}=\lambda^2$, gives
\begin{equation}\label{eq:PSS-J-forced}
 J_\lambda'[x]-\lambda J_\lambda[x]=x''-x(\cdot-a)=f,\end{equation}
and hence
\begin{equation}\label{eq:PSS-J-transport}
 e^{-\lambda t_1}J_\lambda[x](t_1)
 =e^{-\lambda t_0}J_\lambda[x](t_0)
  +\int_{t_0}^{t_1}e^{-\lambda u}f(u)\dd u.\end{equation}
For $f=0$, we recover the autonomous equation
\begin{equation}\label{eq:spectral-autonomous-positive}
 x''(t)=x(t-a),\qquad 0<a<2/e,\end{equation}
and \eqref{eq:PSS-J-transport} reduces to
\begin{equation}\label{eq:J-evolution}
 J_\lambda[x](t)=e^{\lambda t}J_\lambda[x](0).\end{equation}

In particular, the invariant hyperplane
\[
 \Gamma_s:=\{\varphi\in C^1([-a,0]):
 \varphi'(0)+s\varphi(0)+s^2\int_{-a}^0e^{-s\theta}\varphi(\theta)\dd\theta=0\}
\]
consists exactly of the initial histories whose positive growing-mode
coefficient $c_0=J_s[x](0)/\Delta'(s)$ vanishes.
It is the spectral complement of the growing mode $e^{st}$.
The dynamics on this hyperplane are governed by the first-order delay
equation $J_s[x]=0$, namely
\begin{equation}\label{eq:spectral-first-order-reduction}
 x'(t)+sx(t)+s^2\int_{-a}^0e^{-s\theta}x(t+\theta)\dd\theta=0.
\end{equation}
Indeed, \eqref{eq:J-evolution} preserves $J_s[x]=0$, and
\eqref{eq:PSS-J-forced} shows that every solution of this first-order
equation solves \eqref{eq:spectral-autonomous-positive}.
\end{remark}

To use this reduction to characterize positive decreasing solutions, we first establish a comparison lemma for positive integral kernels.

\begin{lemma}
\label{lem:positive-kernel-comparison}
Let $a>0$, let $k\in L^1([0,a])$ be nonnegative, and put
\[
 (\mathcal Kx)(t):=\int_0^a k(v)x(t-v)\dd v,
 \qquad m:=\int_0^a k(v)\dd v.
\]
\begin{enumerate}[label=\textup{(\roman*)},leftmargin=*,itemsep=\medskipamount]
\item\label{item:kernel-strict}
If $k\not\equiv0$, $x\in C([T-a,\infty))$, $x>0$ on $[T-a,T]$, and
$x\ge\mathcal Kx$ on $[T,\infty)$, then $x>0$ throughout $[T,\infty)$.

\item\label{item:kernel-ae}
If $x$ is locally essentially bounded, $x\ge0$ almost everywhere
on $[T-a,T]$, and $x\ge\mathcal Kx$ almost everywhere on $[T,\infty)$,
then $x\ge0$ almost everywhere there.

\item\label{item:kernel-resolvent}
Assume $m<1$, and let $z\in C([T-a,\infty))$ satisfy
\[
 z(t)=c+\mathcal Kz(t),\qquad t\ge T.
\]
Extend $k$ by zero to $[0,\infty)$ and define its renewal resolvent kernel
\[
 \mathfrak r_k:=\sum_{n=1}^\infty k^{*n}\in L^1(0,\infty),
 \qquad
 \int_0^\infty \mathfrak r_k(u)\dd u=\frac{m}{1-m}.
\]
Writing $\varphi(\theta)=z(T+\theta)$ for $-a\le\theta\le0$, set
\[
 g(u):=c+\mathbf1_{[0,a]}(u)\int_u^a k(v)\varphi(u-v)\dd v,
 \qquad u\ge0.
\]
Then, for every $u\ge0$,
\begin{equation}\label{eq:kernel-resolvent-formula}
 z(T+u)=g(u)+(\mathfrak r_k*g)(u)
 =g(u)+\int_0^u\mathfrak r_k(v)g(u-v)\dd v.\end{equation}
In particular,
\begin{equation}\label{eq:kernel-resolvent-limit}
 \lim_{t\to\infty}z(t)=\frac{c}{1-m}.\end{equation}

\item\label{item:kernel-obstruction}
If $m<1$ and a continuous positive function $x$ on $[T-a,\infty)$ satisfies
$x\le c+\mathcal Kx$ on $[T,\infty)$ for a constant $c$, then $c\ge0$.
\end{enumerate}
\end{lemma}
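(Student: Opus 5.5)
We treat the four parts separately. All of them rest on two facts about $\mathcal K$: it is a positive Volterra-type operator, and the values of $\mathcal Kx$ on $[t,t+\eta)$ depend only on $x$ on $[t-a,t+\eta)$.

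\emph{Part \ref{item:kernel-strict}.} We argue by a first-zero argument. Suppose $x$ vanishes somewhere in $[T,\infty)$, and let $t_0>T$ be its first zero; $t_0>T$ because $x(T)>0$ and $x$ is continuous. Then $x>0$ on $[T-a,t_0)$, so
\[
x(t_0)\ge\int_0^a k(v)x(t_0-v)\dd v>0,
\]
since $k\ge0$, $k\not\equiv0$, and $x(t_0-v)>0$ for $v\in(0,a]$. This contradicts $x(t_0)=0$.

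\emph{Part \ref{item:kernel-ae}.} We apply the positive-operator idea to the negative part. Put $y=x_-=\max(-x,0)$. From $x\ge\mathcal Kx$ and positivity of $\mathcal K$,
\[
-x\le-\mathcal Kx\le\mathcal K(x_-),
\]
so $y\le\mathcal Ky$ a.e.\ on $[T,\infty)$, with $y=0$ a.e.\ on $[T-a,T]$. Fix $\eta>0$ small enough that $\int_0^\eta k<1$; this is possible by absolute continuity of the integral. On $[T,T+\eta]$, only values of $y$ on $[T,T+\eta]$ contribute, because $y$ vanishes before $T$. Hence
\[
\operatorname*{ess\,sup}_{[T,T+\eta]}y\le\Bigl(\int_0^\eta k\Bigr)\operatorname*{ess\,sup}_{[T,T+\eta]}y,
\]
and local essential boundedness gives $y=0$ a.e.\ on $[T,T+\eta]$. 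Stepping forward by $\eta$ each time finishes the proof.

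\emph{Part \ref{item:kernel-resolvent}.} We split off the history. For $u\ge0$,
\[
z(T+u)=g(u)+\int_0^u k(v)z(T+u-v)\dd v,
\]
which is a renewal equation $w=g+k*w$ on $[0,\infty)$ with $w(u)=z(T+u)$. Since $\|k\|_{L^1}=m<1$, the series $\sum_n k^{*n}$ converges in $L^1$ with $\int\mathfrak r_k=\sum_n m^n=m/(1-m)$. The Neumann series then gives \eqref{eq:kernel-resolvent-formula}, and uniqueness holds in locally bounded functions by the contraction on short intervals used above. For the limit, $g(u)=c$ for $u>a$ and $g$ is bounded. Since $\mathfrak r_k\in L^1$, dominated convergence gives
\[
(\mathfrak r_k*g)(u)\to c\int\mathfrak r_k,
\]
so that $z(T+u)\to c+cm/(1-m)=c/(1-m)$.

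\emph{Part \ref{item:kernel-obstruction}.} We compare with the solution of the equality case. Let $z$ solve $z=c+\mathcal Kz$ on $[T,\infty)$ with the same history $\varphi=x|_{[T-a,T]}$. Then $w=z-x$ satisfies $w\ge\mathcal Kw$ on $[T,\infty)$ and $w=0$ on $[T-a,T]$. Part~\ref{item:kernel-ae} gives $w\ge0$, so $z\ge x>0$. Suppose $c<0$. By \eqref{eq:kernel-resolvent-limit}, $z(t)\to c/(1-m)<0$, which contradicts $z\ge x>0$. Hence $c\ge0$.

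\emph{Main obstacle.} The delicate point is Part~\ref{item:kernel-ae}: $k$ is only $L^1$ and $x$ is only essentially bounded, so no first-zero argument is available. The stepping contraction with small $\eta$ is the device we would use there. The same device yields uniqueness in Part~\ref{item:kernel-resolvent}.
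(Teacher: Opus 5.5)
Your proposal is correct. Parts (i)--(iii) follow the paper's route. Your stepping contraction on intervals of length $\eta$ with $\int_0^\eta k<1$ is just an explicit proof of the Volterra--Gr\"onwall step the paper cites. Your dominated-convergence limit does the job of the paper's split $g=c+h$.

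Part (iv) is where you take a genuinely different route. The paper argues by a direct maximum principle. If $c<0$, a running maximum $M_R>M_0:=\max_{[T-a,T]}x$ attained at some $t_R\in(T,R]$ would give $M_R\le c+mM_R<M_R$, so $x$ is bounded. Taking $\limsup$ in $x\le c+\mathcal Kx$ then gives $(1-m)L\le c<0$ with $L=\limsup x\ge0$. That argument needs only continuity, positivity of $x$, and $m<1$, and it is independent of (ii) and (iii).

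You instead compare $x$ with the exact solution $z$ of $z=c+\mathcal Kz$ carrying the same history, and reuse (ii) and (iii). This recycles the lemma's own machinery and gives the pointwise domination $x\le z$. The cost is that you must construct $z$, via the Neumann series.

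One technical point should be made explicit. The equation forces $z(T)=c+(\mathcal Kx)(T)$, which in general differs from $x(T)$. So $z$ can jump at $T$ and need not lie in the class $C([T-a,\infty))$ for which (iii) is stated. This is harmless for three reasons:
\begin{enumerate}
\item Part (ii) is an a.e.\ statement.
\item $w=z-x$ vanishes on $[T-a,T)$.
\item Your proof of the limit uses only the representation $z(T+u)=g(u)+(\mathfrak r_k*g)(u)$ with bounded $g$, not continuity.
\end{enumerate}
Still, you should appeal to that proof rather than to the statement of (iii).
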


\begin{proof}
Part~\ref{item:kernel-strict} is the standard first-zero argument:
if $t_0\ge T$ were the first zero of $x$, then
$x(t_0-v)>0$ for $0<v\le a$, and therefore
\[
 (\mathcal Kx)(t_0)>0,
\]
contradicting $x(t_0)\ge(\mathcal Kx)(t_0)$.

For~\ref{item:kernel-ae}, apply the Volterra--Gr\"onwall inequality
to the negative part $x_-$. Indeed,
\[
 0\le x_-\le\mathcal Kx_-,
\]
while $x_-=0$ almost everywhere on $[T-a,T]$. Hence
$x_-=0$ almost everywhere on $[T,\infty)$.
For~\ref{item:kernel-resolvent}, set $Z(u)=z(T+u)$ for $u\ge0$.
Splitting the integral at $v=u$ gives
\[
 Z=g+k*Z.
\]
Since $m<1$, the standard renewal resolvent representation
\cite{FrancoGyllenbergDiekmann2021,Gripenberg1980,GripenbergLondenStaffans1990,Miller1971,Zhang2010} gives
\[
 Z=g+\mathfrak r_k*g,
 \qquad
 \mathfrak r_k=\sum_{n=1}^\infty k^{*n}.
\]
Because $k\ge0$,
\[
 \int_0^\infty\mathfrak r_k
 =\sum_{n=1}^\infty m^n
 =\frac{m}{1-m}.
\]
Moreover, $g=c+h$, where $h$ is bounded and supported in $[0,a]$.
Thus $h(u)\to0$ and $(\mathfrak r_k*h)(u)\to0$, while
\[
 c\int_0^u\mathfrak r_k(v)\dd v
 \longrightarrow c\frac{m}{1-m}.
\]
This proves \eqref{eq:kernel-resolvent-formula} and
\eqref{eq:kernel-resolvent-limit}.

Finally, consider~\ref{item:kernel-obstruction}. Suppose, to the contrary,
that $c<0$, and set
\[
 M_0:=\max_{T-a\le t\le T}x(t).
\]
We first claim that $x(t)\le M_0$ for every $t\ge T$. Indeed, for any
$R>T$, let
\[
 M_R:=\max_{T-a\le t\le R}x(t).
\]
If $M_R>M_0$, then the maximum is attained at some $t_R\in(T,R]$, and
positivity of $k$ gives
\[
 M_R=x(t_R)\le c+(\mathcal Kx)(t_R)\le c+mM_R<M_R,
\]
a contradiction. Thus $x$ is bounded. Put
\[
 L:=\limsup_{t\to\infty}x(t)\ge0.
\]
For every $\varepsilon>0$, one has $x(t)\le L+\varepsilon$ for all
sufficiently large $t$, and hence
\[
 \limsup_{t\to\infty}(\mathcal Kx)(t)\le mL.
\]
Taking the upper limit in $x\le c+\mathcal Kx$ yields
\[
 L\le c+mL,
\]
so $(1-m)L\le c<0$, impossible. Therefore $c\ge0$.
\end{proof}

Let $\mu$ be a finite nonzero positive Borel measure on $[-h,0]$, let
$a_0\ge0$, and consider
\begin{equation}\label{eq:first-order-general-positive}
 y'(t)+a_0y(t)+\int_{-h}^0y(t+\theta)\dd\mu(\theta)=0.\end{equation}
Its characteristic function~\cite{Hale1977,HaleVerduynLunel1993} is
\begin{equation}\label{eq:first-order-characteristic}
 \chi(\lambda):=\lambda+a_0+\int_{-h}^0e^{\lambda\theta}\dd\mu(\theta).\end{equation}
The following lemma is a generalization of the corresponding
positivity lemma in \cite{PSS2023}.

\begin{lemma}
\label{lem:first-order-projection-positivity}
Let $\rho\in\mathbb R$ be a simple zero of the characteristic function $\chi$
defined in \eqref{eq:first-order-characteristic}, with $\chi'(\rho)>0$.
Let $y$ solve \eqref{eq:first-order-general-positive} with strictly positive
initial history on $[-h,0]$, and set
\begin{equation}\label{eq:first-order-spectral-functional}
 Q_\rho(t):=y(t)-\int_{-h}^0\int_\theta^0
 e^{\rho(\theta-v)}y(t+v)\dd v\dd\mu(\theta),\end{equation}
\begin{equation*} c_1:=\frac{Q_\rho(0)}{\chi'(\rho)}.\end{equation*}
Then
\[
 y(t)>0\quad(t\ge0)\quad\Longleftrightarrow\quad c_1\ge0.
\]
In this case $y'(t)<0$ for every $t\ge0$.
\end{lemma}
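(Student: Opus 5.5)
The plan is to reduce the statement to the renewal-type comparison in Lemma~\ref{lem:positive-kernel-comparison}. I would first show that $Q_\rho$ is an exact exponential mode, $Q_\rho(t)=e^{\rho t}Q_\rho(0)$. Write
\[
F(t):=\int_{-h}^0\int_\theta^0 e^{\rho(\theta-v)}y(t+v)\dd v\dd\mu(\theta),
\]
so that $Q_\rho=y-F$.

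\emph{Step 1: $Q_\rho$ is an exponential mode.} Substitute $w=t+v$ in the inner integral and differentiate in $t$. This gives
\[
F'(t)=y(t)\int_{-h}^0 e^{\rho\theta}\dd\mu(\theta)-\int_{-h}^0 y(t+\theta)\dd\mu(\theta)+\rho F(t).
\]
Since $\chi(\rho)=0$, we have $\int e^{\rho\theta}\dd\mu=-\rho-a_0$. Combining this with \eqref{eq:first-order-general-positive} yields $F'=y'-\rho y+\rho F$. Hence $Q_\rho'=\rho Q_\rho$, and so $Q_\rho(t)=e^{\rho t}\chi'(\rho)c_1$.

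\emph{Step 2: rewrite as a renewal equation.} Put $z(t)=e^{-\rho t}y(t)$. Dividing $y=Q_\rho+F$ by $e^{\rho t}$ and applying Fubini, I expect
\[
z(t)=Q_\rho(0)+\int_0^h k(u)z(t-u)\dd u,\qquad t\ge0,
\]
where
\[
k(u)=\int_{[-h,-u]}e^{\rho\theta}\dd\mu(\theta)\ge0 .
\]
The mass of this kernel is
\[
m=\int_0^h k=\int_{-h}^0(-\theta)e^{\rho\theta}\dd\mu(\theta)=1-\chi'(\rho).
\]
The hypothesis $\chi'(\rho)>0$ is used exactly here: it gives $m<1$. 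This is the step I expect to need the most care, mainly the Fubini bookkeeping that shows the kernel is nonnegative with the correct total mass.

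\emph{Step 3: the two implications.} Suppose first that $y>0$ on $[0,\infty)$. Then $z$ is continuous and positive on $[-h,\infty)$ and satisfies $z\le Q_\rho(0)+\mathcal Kz$. Lemma~\ref{lem:positive-kernel-comparison}\ref{item:kernel-obstruction} therefore gives $Q_\rho(0)\ge0$. Since $\chi'(\rho)>0$, this means $c_1\ge0$.

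Conversely, suppose $c_1\ge0$. Then $z\ge\mathcal Kz$ on $[0,\infty)$, and $z>0$ on the history $[-h,0]$. If $k\not\equiv0$, Lemma~\ref{lem:positive-kernel-comparison}\ref{item:kernel-strict} gives $z>0$, hence $y>0$. If $k\equiv0$, then $\mu$ is concentrated at $\theta=0$. In that case the equation is the ODE $y'=-(a_0+\mu\{0\})y$, and positivity is immediate.

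\emph{Step 4: strict decrease.} Once $y>0$ on $[-h,\infty)$, the equation itself gives
\[
y'(t)=-a_0y(t)-\int_{-h}^0 y(t+\theta)\dd\mu(\theta)<0,
\]
because $\mu$ is a nonzero positive measure and the integrand is strictly positive. This proves the final assertion.
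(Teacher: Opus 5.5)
Your proposal is correct and follows the paper's proof essentially step for step. Like the paper, you show $Q_\rho'=\rho Q_\rho$, rewrite $e^{-\rho t}y$ as a renewal equation with kernel $\int_{[-h,-u]}e^{\rho\theta}\dd\mu(\theta)$ of mass $1-\chi'(\rho)<1$, apply parts (i) and (iv) of Lemma~\ref{lem:positive-kernel-comparison} (with the degenerate case $\mu([-h,0))=0$ treated separately, where the paper simply notes the identity collapses to $e^{-\rho t}y\equiv Q_\rho(0)=y(0)>0$), and then read off $y'<0$ from the equation.
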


\begin{proof}
The equation \eqref{eq:first-order-general-positive} and $\chi(\rho)=0$ give
\begin{equation*} Q_\rho'(t)=\rho Q_\rho(t),\qquad Q_\rho(t)=e^{\rho t}Q_\rho(0).\end{equation*}
Write $Y(t)=e^{-\rho t}y(t)$ and $q_0=Q_\rho(0)$. Applying Fubini's theorem to \eqref{eq:first-order-spectral-functional}, one obtains
\[
 Y(t)=q_0+\int_0^h k_\rho(v)Y(t-v)\dd v,
 \qquad k_\rho(v):=\int_{[-h,-v]}e^{\rho\theta}\dd\mu(\theta).
\]
This nonnegative kernel has mass
\[
 \int_0^h k_\rho(v)\dd v
 =\int_{-h}^0(-\theta)e^{\rho\theta}\dd\mu(\theta)
 =1-\chi'(\rho)<1.
\]
If $\mu([-h,0))>0$, parts~\ref{item:kernel-strict} and~
\ref{item:kernel-obstruction} of Lemma~\ref{lem:positive-kernel-comparison}
give $Y>0$ if and only if $q_0\ge0$. If $\mu([-h,0))=0$, the identity
reduces to $Y(t)=q_0=y(0)>0$. Since $\chi'(\rho)>0$, this proves the
equivalence. Positivity in \eqref{eq:first-order-general-positive} then
gives $y'(t)<0$.
\end{proof}

\begin{theorem}
\label{thm:stable-projection-positivity}
Let $x$ solve \eqref{eq:spectral-autonomous-positive} with strictly positive
initial history on $[-a,0]$, and write
\begin{equation*} c_0:=\frac{J_s[x](0)}{\Delta'(s)},\qquad
 c_1:=\frac{J_{-r}[x](0)}{\Delta'(-r)}.\end{equation*}
Then $c_0=0$ and $c_1\ge0$ if and only if
\[
 x(t)>0,\qquad x'(t)<0,\qquad t\ge0.
\]
\end{theorem}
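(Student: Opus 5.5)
The plan is to reduce everything to Lemma~\ref{lem:first-order-projection-positivity} via the first-order reduction of Remark~\ref{rem:spectral-first-order-reduction}. That reduction is the equation $J_s[x]=0$, written in the form \eqref{eq:first-order-general-positive} with
\[
 a_0=s,\qquad h=a,\qquad \dd\mu(\theta)=s^2e^{-s\theta}\dd\theta\ \text{ on }[-a,0].
\]
Three checks are needed: its characteristic function factors through $\Delta$; the root $-r$ satisfies the hypothesis $\chi'(-r)>0$; and the coefficient $c_1$ of the theorem coincides with the coefficient $Q_{-r}(0)/\chi'(-r)$ of the lemma.

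\emph{Factorization of $\chi$.} A direct computation, using $s^2e^{sa}=1$, gives for $\lambda\ne s$
\[
 (\lambda-s)\chi(\lambda)
 =\lambda^2-s^2+s^2\bigl(1-e^{-(\lambda-s)a}\bigr)
 =\lambda^2-e^{-a\lambda}=\Delta(\lambda).
\]
Hence $\chi=\Delta/(\lambda-s)$, and the zeros of $\chi$ are those of $\Delta$ other than $s$. In particular $\chi(-r)=0$. Since $\Delta(-r)=0$, we get $\chi'(-r)=\Delta'(-r)/(-r-s)$. Moreover $\Delta'(-r)=-2r+ae^{ar}=-r(2-ar)<0$, because $r<2/a$. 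Therefore $\chi'(-r)=r(2-ar)/(r+s)>0$. Since $\Delta'(-r)\neq0$, $-r$ is a simple zero of $\Delta$, and hence a simple zero of $\chi$.

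\emph{Matching the coefficients.} I will show that on solutions with $J_s[x]\equiv0$,
\[
 J_{-r}[x](t)=-(r+s)\,Q_{-r}(t).
\]
To do so, write $J_{-r}[x]=x'-rx+r^2\int_{-a}^0e^{r\theta}x(t+\theta)\dd\theta$ and substitute $x'$ from $J_s[x]=0$. Then apply Fubini to the double integral defining $Q_{-r}$ in \eqref{eq:first-order-spectral-functional} and compare the coefficients of $x(t)$ and of $x(t+v)$, using $r^2=e^{ar}$ and $s^2=e^{-as}$. A structural check supports the identity: both sides evolve as $e^{-rt}$, by \eqref{eq:J-evolution} and by the proof of Lemma~\ref{lem:first-order-projection-positivity}. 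Granting it,
\[
 c_1=\frac{J_{-r}[x](0)}{\Delta'(-r)}
 =\frac{-(r+s)Q_{-r}(0)}{-r(2-ar)}
 =\frac{Q_{-r}(0)}{\chi'(-r)}.
\]
This kernel computation is the step I expect to be the main obstacle. If the constant turns out different, only its sign matters, and the sign is fixed by the factorization above.

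\emph{Forward direction.} Suppose $c_0=0$. Then $J_s[x](0)=0$, so $J_s[x]\equiv0$ by \eqref{eq:J-evolution}, and $x$ solves the first-order equation \eqref{eq:spectral-first-order-reduction} with positive history. Lemma~\ref{lem:first-order-projection-positivity} with $\rho=-r$ then gives $x>0$ on $t\ge0$ if and only if $c_1\ge0$, and in that case also $x'<0$.

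\emph{Converse.} Suppose $x>0$ and $x'<0$ for $t\ge0$. Then $x$ is bounded on $[-a,\infty)$. Since $x''=x(t-a)>0$, $x'$ is increasing and negative, so it is bounded as well. Consequently $J_s[x](t)$ is bounded on $[0,\infty)$. But $J_s[x](t)=e^{st}J_s[x](0)$ with $s>0$, which forces $J_s[x](0)=0$, that is, $c_0=0$. The forward-direction equivalence then yields $c_1\ge0$.
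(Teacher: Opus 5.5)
Your proposal is correct and follows the paper's proof essentially step for step. It uses the same reduction to Lemma~\ref{lem:first-order-projection-positivity} with $a_0=s$ and $\dd\mu=s^2e^{-s\theta}\dd\theta$, the same factorization $\chi=\Delta/(\lambda-s)$ giving $\chi'(-r)=r(2-ar)/(r+s)>0$, and the same identity $Q_{-r}[x]=(J_s[x]-J_{-r}[x])/(r+s)$. That identity in fact holds for every $x$, by Fubini together with $s^2e^{(r+s)a}=r^2$, so your hedge about the constant is unnecessary. The only cosmetic difference is in the converse: you kill the growing mode via boundedness of $J_s[x]$, whereas the paper uses $x,x'\to0$, and either suffices.
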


\begin{proof}
By Remark~\ref{rem:spectral-first-order-reduction}, $c_0=0$ is equivalent to
\eqref{eq:spectral-first-order-reduction}. This equation is of the form
\eqref{eq:first-order-general-positive}, with $a_0=s$ and
$\dd\mu(\theta)=s^2e^{-s\theta}\dd\theta$. Its characteristic function satisfies
\begin{equation*} \chi(\lambda)=\lambda+s+s^2\int_{-a}^0e^{(\lambda-s)\theta}\dd\theta
 =\frac{\Delta(\lambda)}{\lambda-s},\end{equation*}
with the quotient defined at $\lambda=s$ by continuity. In particular,
\begin{equation*} \chi'(-r)=\frac{\Delta'(-r)}{-r-s}
 =\frac{r(2-ar)}{r+s}>0.\end{equation*}
Fubini's theorem in \eqref{eq:first-order-spectral-functional} gives
\begin{equation*} Q_{-r}[x]=\frac{J_s[x]-J_{-r}[x]}{r+s}.\end{equation*}
When $J_s[x](0)=0$, this implies
\[
 \frac{Q_{-r}[x](0)}{\chi'(-r)}
 =\frac{J_{-r}[x](0)}{\Delta'(-r)}=c_1.
\]
Lemma~\ref{lem:first-order-projection-positivity} therefore proves
sufficiency directly. Conversely, positivity and decrease, together with
$x''(t)=x(t-a)>0$, force $x,x'\to0$. Hence $J_s[x](t)\to0$, and
\eqref{eq:J-evolution} gives $c_0=0$. Lemma~\ref{lem:first-order-projection-positivity} now gives $c_1\ge0$.
\end{proof}

\subsection{Comparison estimates for positive decreasing solutions}
\label{subsec:spectral-comparison}

We express the second-order delay operator through a positive integral
operator. This representation yields sign restrictions on the spectral
coordinates of positive decreasing supersolutions.

\begin{lemma}

Let $0<a<2/e$, with $r,s$ as above, and define
\begin{align*}
 h_a(v)&:=\frac{r^2e^{-rv}-s^2e^{sv}}{r+s},\qquad 0\le v\le a,\\
 (\mathcal H_af)(t)&:=\int_0^a h_a(v)f(t-v)\dd v,
 \qquad \mathcal P_a:=(D+r)(D-s),\quad D:=\frac{\mathrm d}{\mathrm dt}.
\end{align*}
The kernel $h_a$ is positive on $[0,a)$, vanishes at $a$, and satisfies
\begin{equation}\label{eq:PSS-memory-masses}
 \int_0^a h_a(v)\dd v=1-\frac1{rs}<1,
 \qquad
 \int_0^a h_a(v)e^{rv}\dd v=1-\frac{r(2-ar)}{r+s}<1.
\end{equation}
For every $C^1$ function $x$ with locally absolutely continuous derivative,
\begin{equation}\label{eq:PSS-history-functional}
 \mathcal C_a[x](t):=J_s[x](t)-J_{-r}[x](t)
 =(r+s)(I-\mathcal H_a)x(t),
\end{equation}
and
\begin{align}
 \mathcal H_a\mathcal P_ax
 &=x(t-a)+(r-s)x'(t)-rsx(t),
 \label{eq:PSS-memory-factor-one}\\
 (I-\mathcal H_a)\mathcal P_ax&=x''(t)-x(t-a).
 \label{eq:PSS-memory-factor-two}
\end{align}
\end{lemma}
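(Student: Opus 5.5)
The plan is to verify everything by direct computation, using only the two characteristic relations $r^2=e^{ar}$ and $s^2=e^{-as}$ (equivalently $e^{-ra}=r^{-2}$ and $e^{sa}=s^{-2}$), together with $0<s<r$.

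\emph{Kernel properties.} For the sign of $h_a$, observe that $v\mapsto r^2e^{-rv}$ is strictly decreasing and $v\mapsto s^2e^{sv}$ is strictly increasing. At $v=a$ both equal $1$, so $h_a(a)=0$. Hence $h_a>0$ on $[0,a)$.

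For the masses \eqref{eq:PSS-memory-masses}, integrate the exponentials explicitly and substitute the characteristic relations:
\[
 \int_0^a r^2e^{-rv}\dd v=r-\frac1r,\qquad
 \int_0^a s^2e^{sv}\dd v=\frac1s-s .
\]
Their difference divided by $r+s$ is $1-1/(rs)$. For the weighted mass, $\int_0^a r^2\dd v=ar^2$ and $\int_0^a s^2e^{(r+s)v}\dd v=s^2\bigl(e^{(r+s)a}-1\bigr)/(r+s)$. Since $e^{(r+s)a}=r^2/s^2$, the second integral equals $r-s$. So the weighted mass is $(ar^2-r+s)/(r+s)$, which rearranges to $1-r(2-ar)/(r+s)$. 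This quantity is $<1$ because $ar<2$; indeed $r\to e$ only as $a\to 2/e$ (compare Part~IV of Lemma~\ref{lem:positive-boundary}).

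\emph{Identity \eqref{eq:PSS-history-functional}.} Subtract the definitions of $J_s$ and $J_{-r}$ from Remark~\ref{rem:spectral-first-order-reduction}, using $e^{-as}=s^2$ and $e^{ar}=r^2$. The $x'$ terms cancel and the local terms give $(r+s)x(t)$. In the memory terms, substitute $\theta=-v$. This produces
\[
 \int_0^a\bigl(s^2e^{sv}-r^2e^{-rv}\bigr)x(t-v)\dd v=-(r+s)(\mathcal H_ax)(t),
\]
which is exactly \eqref{eq:PSS-history-functional}.

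\emph{Factorization \eqref{eq:PSS-memory-factor-one}.} Write $\mathcal P_ax=x''+(r-s)x'-rsx$. Use $\frac{\mathrm d}{\mathrm dt}x(t-v)=-\partial_vx(t-v)$ and integrate by parts twice in $v$ over $[0,a]$; this is legitimate because $x'$ is locally absolutely continuous. The interior term becomes $\int_0^a\bigl(h_a''+(r-s)h_a'-rsh_a\bigr)(v)\,x(t-v)\dd v$. It vanishes because $h_a$ is a combination of $e^{-rv}$ and $e^{sv}$, the roots of $\mu^2+(r-s)\mu-rs=(\mu+r)(\mu-s)$.

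Only boundary terms survive. Their coefficients use
\[
 h_a(0)=r-s,\qquad h_a(a)=0,\qquad h_a'(a)=-\frac{r^3e^{-ra}+s^3e^{sa}}{r+s}=-1,
\]
together with $h_a'(0)=-(r^2-rs+s^2)$.

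Collecting the boundary terms at $v=0$ and $v=a$ requires checking that:
\begin{itemize}
\item the coefficient of $x'(t)$ is $r-s$;
\item the coefficient of $x(t)$ is $-rs$, which amounts to $-h_a'(0)-(r-s)h_a(0)=-rs$, since $-h_a'(0)-(r-s)^2=rs-2rs+\dots$ reduces to $-rs$;
\item the only term at $v=a$ is $x(t-a)$, coming from $-h_a'(a)=1$, because $h_a(a)=0$ kills the derivative term there.
\end{itemize}
This sign bookkeeping in the double integration by parts is the main (if routine) obstacle. I would organize it by first integrating $\int h_a\,\partial_v^2[x(t-\cdot)]$ by parts, and then the first-order terms.

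\emph{Factorization \eqref{eq:PSS-memory-factor-two}.} This follows immediately by subtraction:
\[
 \mathcal P_ax-\mathcal H_a\mathcal P_ax=x''+(r-s)x'-rsx-\bigl[x(t-a)+(r-s)x'-rsx\bigr]=x''-x(t-a).
\]
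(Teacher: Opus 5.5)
\textbf{Assessment.} Your plan is the paper's own argument. You derive the kernel facts and masses from $r^2e^{-ra}=s^2e^{sa}=1$ and subtract the definitions of $J_s$ and $J_{-r}$. You then integrate by parts twice using $h_a(0)=r-s$, $h_a(a)=0$, $h_a'(a)=-1$, $h_a'(0)=-(r^2-rs+s^2)$ and $h_a''+(r-s)h_a'-rsh_a=0$, and get \eqref{eq:PSS-memory-factor-two} by subtraction. Your positivity argument, both mass computations, and the derivation of \eqref{eq:PSS-history-functional} are correct.

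\textbf{The error.} The one piece of bookkeeping you write out for \eqref{eq:PSS-memory-factor-one} has the wrong sign. With $y(v)=x(t-v)$, so that $y'(0)=-x'(t)$, the boundary terms are
\[
 [h_ay']_0^a-[h_a'y]_0^a-(r-s)[h_ay]_0^a
 =(r-s)x'(t)+x(t-a)+\bigl(h_a'(0)+(r-s)h_a(0)\bigr)x(t).
\]
The coefficient of $x(t)$ is therefore
\[
 h_a'(0)+(r-s)h_a(0)=-(r^2-rs+s^2)+(r-s)^2=-rs.
\]
Your expression $-h_a'(0)-(r-s)h_a(0)$ equals $(r^2-rs+s^2)-(r-s)^2=+rs$. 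The claimed reduction ``$rs-2rs+\dots$'' to $-rs$ is false. Carried through literally, it would put $+rs\,x(t)$ into \eqref{eq:PSS-memory-factor-one}.

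The cause is that the lower limit $v=0$ enters each integration by parts with a minus sign. So $-[h_a'y]_0^a$ contributes $+h_a'(0)x(t)$, and $-(r-s)[h_ay]_0^a$ contributes $+(r-s)h_a(0)x(t)$. With this correction the argument is complete and matches the paper.

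\textbf{Minor point.} For the second mass, justify $ar<2$ directly from $r<2/a$. This is stated in Remark~\ref{rem:spectral-first-order-reduction} and also follows from the Rouch\'e argument in Corollary~\ref{cor:positive-shape}. Saying that $r\to e$ only as $a\to2/e$ does not by itself give the inequality for each fixed $a$.
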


\begin{proof}
The positivity of $h_a$ and the identities
\eqref{eq:PSS-memory-masses}--\eqref{eq:PSS-history-functional}
follow directly from the definitions and the characteristic identities
$r^2e^{-ra}=s^2e^{sa}=1$. Moreover,
\[
 h_a(0)=r-s,\qquad h_a(a)=0,\qquad h_a'(a)=-1,
\]
and
\[
 h_a'(0)=-(r^2-rs+s^2),\qquad h_a''+(r-s)h_a'-rsh_a=0.
\]
Two integrations by parts therefore give
\eqref{eq:PSS-memory-factor-one}. Subtracting this identity from
$\mathcal P_ax=x''+(r-s)x'-rsx$ gives
\eqref{eq:PSS-memory-factor-two}.
\end{proof}

\begin{lemma}
\label{lem:PSS-actual-history}
Let $0<a<2/e$. Suppose that $x>0$ is decreasing on $[T-a,\infty)$,
$x,x'\to0$, and $x''(t)\ge x(t-a)$ almost everywhere for $t\ge T$. Then
\begin{equation}\label{eq:PSS-actual-spectral-signs}
 J_s[x](t)\le0,\qquad \mathcal C_a[x](t)\ge0,\qquad t\ge T.\end{equation}
Moreover, if $t_0\in\mathbb R$ and $y$ is any solution of
\[
 y''(t)=y(t-a),\qquad t\ge t_0,
\]
with $C^1$ history on $[t_0-a,t_0]$ and $J_s[y](t_0)=0$, then
\begin{equation}\label{eq:PSS-actual-c1}
 \lim_{t\to\infty}e^{r(t-t_0)}y(t)
 =\frac{\mathcal C_a[y](t_0)}{r(2-ar)}.\end{equation}
This convergence is uniform if the $C^1$ histories on $[t_0-a,t_0]$ range in a bounded subset of $C^1([t_0-a,t_0])$
on which
\[
 A_y:=\frac{\mathcal C_a[y](t_0)}{r(2-ar)}
\]
stays bounded away from zero. Namely, uniformly over these histories,
\[
 \left\|
 \frac{y(t+\cdot)}{A_y e^{-r(t-t_0)}}-e^{-r\cdot}
 \right\|_{C^1([-a,0])}\longrightarrow0.
\]
\end{lemma}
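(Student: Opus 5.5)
Three things are claimed: the sign of $J_s[x]$, the sign of $\mathcal C_a[x]$, and the asymptotic formula \eqref{eq:PSS-actual-c1} with its uniform version. I will prove them in that order.

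\textbf{Sign of $J_s[x]$.} I apply the transport identity \eqref{eq:PSS-J-transport} with $\lambda=s$ and forcing $f=x''-x(\cdot-a)\ge0$ on $[T,\infty)$. For $t\le t_1$ it gives
\[
 e^{-st}J_s[x](t)=e^{-st_1}J_s[x](t_1)-\int_t^{t_1}e^{-su}f(u)\dd u .
\]
Since $x,x'\to0$ and $J_s[x]$ is a linear combination of $x'$, $x$ and an average of $x$ over $[t-a,t]$, we have $J_s[x](t_1)\to0$. The factor $e^{-st_1}$ is bounded, so letting $t_1\to\infty$ and using monotone convergence gives
\[
 J_s[x](t)=-e^{st}\int_t^\infty e^{-su}f(u)\dd u\le0 .
\]

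\textbf{Sign of $\mathcal C_a[x]$.} I will try two routes.

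The first uses the factorisation \eqref{eq:PSS-memory-factor-two}. Set $w:=\mathcal P_ax$. Then
\[
 (I-\mathcal H_a)w=f\ge0\quad\text{on }[T,\infty).
\]
At the same time $w=(D-s)(D+r)x=(D+r)(x'-sx)$. Since $D-s$ commutes with $D+r$, \eqref{eq:PSS-history-functional} identifies $(r+s)(I-\mathcal H_a)x$ with $J_s-J_{-r}$, and so
\[
 \mathcal C_a'[x]-s\,\mathcal C_a[x]=(r+s)(I-\mathcal H_a)(D-s)x
\]
by translation invariance of $\mathcal H_a$. This route looks circular, so I prefer the second.

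The second route works directly with $J_{-r}[x]$. Apply \eqref{eq:PSS-J-transport} with $\lambda=-r$:
\[
 e^{rt}J_{-r}[x](t)\ \text{is nondecreasing in } t .
\]
Hence $J_{-r}[x](t)\le e^{-r(t_1-t)}J_{-r}[x](t_1)$ for $t_1\ge t$. I then want to show that $e^{rt_1}J_{-r}[x](t_1)$ stays controlled relative to $e^{rt}$.

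That is not enough by itself, so I will instead use the kernel representation. We have
\[
 \mathcal C_a[x]=(r+s)\bigl(x-\mathcal H_ax\bigr),
\]
where $\mathcal H_a$ has positive kernel of mass $1-1/(rs)<1$. Since $x$ is decreasing, $x(t-v)\ge x(t)$, so the naive bound $\mathcal H_ax\ge(1-1/(rs))x$ points the wrong way. I will therefore try a first-zero argument on $q:=\mathcal C_a[x]$.

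By \eqref{eq:PSS-J-forced}, $q'-s q=(r+s)J_{-r}[x]$ (since $J_s'-sJ_s=f$ and $J_{-r}'+rJ_{-r}=f$). Write $Y=e^{rt}x$ and use the $Q_{-r}$ representation from Lemma~\ref{lem:first-order-projection-positivity}. This becomes an inequality of the form
\[
 Y\ge q_0(t)+\int_0^a k_{-r}(v)Y(t-v)\dd v ,
\]
where the forcing is now variable. The plan is as follows. Suppose $q(t_0)<0$ at some $t_0\ge T$. Then transport via the $J_{-r}$ monotonicity makes $q$ stay below a negative constant times $e^{-r(\cdot)}$ relative to $Y$. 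Lemma~\ref{lem:positive-kernel-comparison}\ref{item:kernel-obstruction} then yields a contradiction with positivity of $x$. This step, making the variable-forcing version of the obstruction lemma work, is the main obstacle. As a fallback, I will compare $x$ with the solution $y$ of the autonomous equation carrying the same history at $t_0$, for which $J_s[y](t_0)=J_s[x](t_0)$ need not vanish.

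\textbf{Asymptotics \eqref{eq:PSS-actual-c1}.} When $J_s[y](t_0)=0$, \eqref{eq:J-evolution} shows that $y$ solves the first-order equation \eqref{eq:spectral-first-order-reduction}. By the computation in the proof of Theorem~\ref{thm:stable-projection-positivity}, $Y=e^{r(t-t_0)}y$ satisfies
\[
 Y=q_0+\mathcal KY ,
\]
with kernel mass $1-\chi'(-r)<1$ and $q_0=Q_{-r}(t_0)=\mathcal C_a[y](t_0)/(r+s)$. Lemma~\ref{lem:positive-kernel-comparison}\ref{item:kernel-resolvent} then gives
\[
 Y\to\frac{q_0}{\chi'(-r)}=\frac{\mathcal C_a[y](t_0)}{r(2-ar)} .
\]
For uniformity, the resolvent formula \eqref{eq:kernel-resolvent-formula} makes $Y(t_0+u)-q_0/(1-m)$ equal to
\[
 -q_0\int_u^\infty\mathfrak r_k\;+\;\text{terms in }\mathfrak r_k*h,
\]
where $h$ is supported in $[0,a]$ and bounded by the history norm. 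Both tend to $0$ uniformly over bounded sets of histories. Dividing by $A_y$, which is bounded away from zero, gives uniform $C^0$ convergence of $y(t+\cdot)/(A_ye^{-r(t-t_0)})$ to $e^{-r\cdot}$. The $C^1$ convergence follows from the first-order equation, which expresses $y'$ through $y$ on a window of length $a$.
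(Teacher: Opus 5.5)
Your proof of $J_s[x]\le0$ and of the asymptotic formula \eqref{eq:PSS-actual-c1} with its uniform version is correct and matches the paper. The kernel $k_{-r}$ you take from the $Q_{-r}$ representation equals $h_a(v)e^{rv}$, because $s^2e^{(r+s)a}=r^2$. The gap is the middle claim $\mathcal C_a[x](t)\ge0$, which you never prove. You flag the ``variable-forcing'' step as the main obstacle and leave it open. The fallback comparison with the autonomous solution is not carried out and gives no sign on $\mathcal C_a[x](t_0)$.

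The missing idea is to subtract the two identities \eqref{eq:PSS-J-forced} and pair the result with $+r\,\mathcal C_a$ rather than $-s\,\mathcal C_a$. From $J_s'-sJ_s=f=J_{-r}'+rJ_{-r}$ one gets $\mathcal C_a'[x]=sJ_s[x]+rJ_{-r}[x]$, hence
\[
 \mathcal C_a'[x]+r\,\mathcal C_a[x]=(r+s)J_s[x]\le0
\]
by your first step. So $e^{rt}\mathcal C_a[x](t)$ is nonincreasing. Suppose $\mathcal C_a[x](t_0)<0$ and put $\delta:=-\mathcal C_a[x](t_0)/(r+s)>0$; then $e^{r(t-t_0)}\mathcal C_a[x](t)\le-(r+s)\delta$ for all $t\ge t_0$. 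Inserting this into \eqref{eq:PSS-history-functional} for $W(t)=e^{r(t-t_0)}x(t)$ gives $0<W(t)\le-\delta+\int_0^a h_a(v)e^{rv}W(t-v)\dd v$. This has \emph{constant} negative forcing, and the kernel mass is $<1$ by \eqref{eq:PSS-memory-masses}. So Lemma~\ref{lem:positive-kernel-comparison}\textup{\ref{item:kernel-obstruction}} applies exactly as stated, and no variable-forcing version is needed.

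The monotonicity of $e^{rt}J_{-r}[x](t)$ that you propose to transport controls the wrong quantity. Combined with $J_s[x]\le0$, it only yields $e^{r(t-t_0)}\mathcal C_a[x](t)\le-J_{-r}[x](t_0)=\mathcal C_a[x](t_0)-J_s[x](t_0)$. The assumption $\mathcal C_a[x](t_0)<0$ does not make this right-hand side negative, since $-J_s[x](t_0)\ge0$ may dominate. At best this route proves $J_{-r}[x]\le0$, i.e.\ $\mathcal C_a[x]\ge J_s[x]$, which is strictly weaker than the claim. Note also that the obstruction lemma needs an upper inequality $W\le c+\mathcal KW$; the ``$\ge$'' you wrote points the wrong way, though here the relation is actually an identity.
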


\begin{proof}
Set $f=x''-x(\cdot-a)\ge0$. Integrating \eqref{eq:PSS-J-forced} to infinity,
using $x,x'\to0$, gives
\begin{equation*} J_s[x](t)=-\int_t^\infty e^{-s(u-t)}f(u)\dd u\le0.\end{equation*}
Subtracting \eqref{eq:PSS-J-forced} for $s$ and $-r$ gives
\begin{equation*} \mathcal C_a'[x]+r\mathcal C_a[x]=(r+s)J_s[x]\le0.\end{equation*}
If $\mathcal C_a[x](t_0)<0$, set $W(t)=e^{r(t-t_0)}x(t)$ and
$\delta=-\mathcal C_a[x](t_0)/(r+s)>0$. The preceding inequality gives
\[
 \frac{\mathcal C_a[x](t)}{r+s}
 \le -\delta e^{-r(t-t_0)},\qquad t\ge t_0.
\]
Together with \eqref{eq:PSS-history-functional}, this implies
\[
 0<W(t)\le-\delta+\int_0^a h_a(v)e^{rv}W(t-v)\dd v,
 \qquad t\ge t_0.
\]
Lemma~\ref{lem:positive-kernel-comparison}\textup{\ref{item:kernel-obstruction}},
with the second mass in \eqref{eq:PSS-memory-masses}, rules this out.
Thus \eqref{eq:PSS-actual-spectral-signs} holds.

For the last assertion, \eqref{eq:J-evolution} and $J_s[y](t_0)=0$ give
$J_s[y]\equiv0$, and hence
\[
 \mathcal C_a[y](t)
 =e^{-r(t-t_0)}\mathcal C_a[y](t_0).
\]
Therefore $Y(t):=e^{r(t-t_0)}y(t)$ satisfies
\begin{equation*} Y(t)=\frac{\mathcal C_a[y](t_0)}{r+s}
      +\int_0^a h_a(v)e^{rv}Y(t-v)\dd v.\end{equation*}
Apply Lemma~\ref{lem:positive-kernel-comparison}\textup{\ref{item:kernel-resolvent}} with
\[
 k(v)=h_a(v)e^{rv},\qquad
 c=\frac{\mathcal C_a[y](t_0)}{r+s},\qquad
 m:=\int_0^a h_a(v)e^{rv}\dd v
 =1-\frac{r(2-ar)}{r+s}<1.
\]
The resolvent limit \eqref{eq:kernel-resolvent-limit} gives
\eqref{eq:PSS-actual-c1} directly:
\[
 \lim_{t\to\infty}Y(t)
 =\frac{\mathcal C_a[y](t_0)/(r+s)}{1-m}
 =\frac{\mathcal C_a[y](t_0)}{r(2-ar)}.
\]
Equation~\eqref{eq:kernel-resolvent-formula} gives the stated uniformity. For $C^1$ histories on
$[t_0-a,t_0]$ in a fixed bounded neighborhood, both $c$ and
$h:=g-c$ are uniformly bounded, with $\operatorname{supp}h\subset[0,a]$.
Hence, for $u>a$,
\[
 Y(t_0+u)-A_y
 =\int_{u-a}^{u}\mathfrak r_k(v)h(u-v)\dd v
 -c\int_u^\infty\mathfrak r_k(v)\dd v,
\]
and therefore, with a constant $C$ independent of the history,
\[
 |Y(t_0+u)-A_y|
 \le C\int_{u-a}^\infty\mathfrak r_k(v)\dd v\longrightarrow0.
\]
Thus the convergence is uniform, also on translates $t+[-a,0]$. Since
$J_s[y]\equiv0$, equation~\eqref{eq:spectral-first-order-reduction} expresses
$y'$ continuously in terms of the recent history of $y$, so the derivative
convergence is uniform as well. Finally, the lower bound for $A_y$ allows the
preceding uniform estimates to be divided by $A_y$, yielding the asserted
normalized $C^1$ convergence.\qedhere
\end{proof}

\begin{lemma}[{\cite{Myshkis1972,PSS2023}}]
\label{lem:PSS-rR-properties}
For $0<a<2/e$, consider $r=r(a)$ and $R=R(a)$, the two positive roots of
\begin{equation}
 q^2=e^{aq},
\end{equation}
satisfying
\[
 1<r(a)<e<R(a).
\]
Equivalently, $-r(a)$ and $-R(a)$ are the two real characteristic roots of
$y'(t)+y(t-a/2)=0$, and
\begin{equation}
 r(a)=-\frac2a W_0(-a/2),\qquad
 R(a)=-\frac2a W_{-1}(-a/2).
\end{equation}
As continuous functions of $a$, $r$ is strictly increasing, while $R$ is strictly decreasing. Furthermore,
\[
 \lim_{a\to0^+}r(a)=1,\qquad
 \lim_{a\to0^+}R(a)=+\infty,\qquad
 \lim_{a\to(2/e)^-}r(a)=\lim_{a\to(2/e)^-}R(a)=e.
\]
\end{lemma}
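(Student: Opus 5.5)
This is an elementary real-analysis statement about the real positive roots of $q^2=e^{aq}$, equivalently $q=e^{aq/2}$. Writing $w=-aq/2$ turns this into $we^{w}=-a/2$. For $0<a<2/e$ one has $-a/2\in(-1/e,0)$, and on this interval the Lambert equation has exactly two real solutions: $W_0(-a/2)\in(-1,0)$ and $W_{-1}(-a/2)<-1$. This gives the two roots
\[
 r=-\tfrac2aW_0(-a/2),\qquad R=-\tfrac2aW_{-1}(-a/2),
\]
with $r<R$.

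The equivalence with $y'(t)+y(t-a/2)=0$ is immediate. Its characteristic equation is $\lambda+e^{-\lambda a/2}=0$, and for $\lambda=-q$ this reads $q=e^{qa/2}$. Positivity of $q$ is automatic, since $q=e^{aq/2}>0$.

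For the ordering $1<r<e<R$ I would use the function $\phi(q)=\log q/q$, since $q=e^{aq/2}$ is equivalent to $\phi(q)=a/2$.
\begin{itemize}
\item $\phi$ increases strictly on $(1,e)$ and decreases strictly on $(e,\infty)$.
\item $\phi\le0$ on $(0,1]$, and $\phi(e)=1/e$.
\item Hence for $a/2\in(0,1/e)$ there is exactly one solution in $(1,e)$, which is $r$, and exactly one in $(e,\infty)$, which is $R$.
\end{itemize}

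Monotonicity in $a$ follows from the same picture. Inverting $\phi$ on each monotone branch, $r(a)=(\phi|_{(1,e)})^{-1}(a/2)$ is continuous and strictly increasing, while $R(a)=(\phi|_{(e,\infty)})^{-1}(a/2)$ is continuous and strictly decreasing. Both inverses are continuous because $\phi$ is continuous and strictly monotone on each branch.

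The limits come from the range of $\phi$ on each branch.
\begin{itemize}
\item As $a\to0^+$: $\phi$ maps $(1,e)$ onto $(0,1/e)$ with $\phi(q)\to0$ as $q\to1^+$, so $r\to1$. On $(e,\infty)$, $\phi(q)\to0$ only as $q\to\infty$, so $R\to\infty$.
\item As $a\to(2/e)^-$: both inverse branches converge to the unique maximizer $q=e$ of $\phi$.
\end{itemize}

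The only mildly delicate point is this endpoint limit, and it follows directly from continuity of the branch inverses at $\phi(e)=1/e$. Equivalently, it follows from $W_0(-1/e)=W_{-1}(-1/e)=-1$. No step is a real obstacle.
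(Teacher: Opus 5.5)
Your proof is correct and complete. The substitution $w=-aq/2$ gives the Lambert form with the right branch assignment: since $w\mapsto -2w/a$ is decreasing, $W_0$ yields the smaller root $r$ and $W_{-1}$ the larger root $R$. The characteristic-equation equivalence is exactly as you state. The analysis of $\phi(q)=\log q/q$ then delivers everything else at once:
\begin{itemize}
\item there is no root in $(0,1]$, exactly one in $(1,e)$, and exactly one in $(e,\infty)$;
\item $r$ and $R$ are continuous and strictly monotone in $a$, being inverses of the two monotone branches of $\phi$;
\item all four endpoint limits follow from the range of $\phi$ on each branch.
\end{itemize}

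The paper itself gives no argument for this lemma; it defers to \cite{Myshkis1972,PSS2023}. Where it needs such facts elsewhere (e.g.\ Parts~III--IV of the proof of Lemma~\ref{lem:positive-boundary}), it reads them off from properties of the Lambert function: $W_0(z)/z\to1$ as $z\to0$, continuity of $W_0$, and $W_0(-1/e)=-1$.

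Your route differs in that the Lambert function serves only to write the roots in closed form. Every qualitative assertion (existence and uniqueness on each side of $e$, the ordering, monotonicity in $a$, and the limits) comes from the elementary shape of $\phi$. This makes your version self-contained. The Lambert-based reading is equally short, since $r=e^{-W_0(-a/2)}$ and $R=e^{-W_{-1}(-a/2)}$, but it takes the ranges and monotonicity of the branches $W_0$ and $W_{-1}$ on $(-1/e,0)$ as black-box inputs.
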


\begin{lemma}
\label{lem:PSS-two-sided-decay}
Let $0<a<2/e$, and let $r=r(a)$ and $R=R(a)$ be as in
Lemma~\ref{lem:PSS-rR-properties}. Suppose that $x>0$ is decreasing on
$[T-a,\infty)$, $x,x'\to0$, and
\[
 x''(t)\ge x(t-a)\qquad\text{for almost every }t\ge T.
\]
Then there exist constants $c,C>0$ such that
\begin{equation}\label{eq:PSS-two-sided-decay}
 c e^{-R(t-T)}\le x(t)\le C e^{-r(t-T)},\qquad t\ge T.
\end{equation}
Moreover,
\begin{equation}\label{eq:PSS-log-derivative-bound}
 r\le \liminf_{t\to\infty}\left(-\frac{x'(t)}{x(t)}\right)\le R.
\end{equation}
\end{lemma}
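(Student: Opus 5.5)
The two bounds in \eqref{eq:PSS-two-sided-decay} and \eqref{eq:PSS-log-derivative-bound} are treated separately: the upper bound with rate $r$ comes from the spectral sign information of Lemma~\ref{lem:PSS-actual-history}, and the lower bound with rate $R$ from a Riccati-type argument on the logarithmic derivative.

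\emph{Upper bound.} By Lemma~\ref{lem:PSS-actual-history} we have $\mathcal C_a[x](t)\ge0$ and $J_s[x]\le0$ on $[T,\infty)$. Together with $\mathcal C_a'[x]+r\mathcal C_a[x]=(r+s)J_s[x]\le0$, this gives $0\le\mathcal C_a[x](t)\le e^{-r(t-T)}\mathcal C_a[x](T)$. Set $W(t)=e^{r(t-T)}x(t)$. By \eqref{eq:PSS-history-functional},
\[
 W(t)\le\frac{\mathcal C_a[x](T)}{r+s}+\int_0^a h_a(v)e^{rv}W(t-v)\dd v,\qquad t\ge T.
\]
The kernel mass is $m<1$ by \eqref{eq:PSS-memory-masses}. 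The maximum argument in the proof of Lemma~\ref{lem:positive-kernel-comparison}\ref{item:kernel-obstruction} then gives $W\le\max\{\max_{[T-a,T]}W,\ c/(1-m)\}$, and this yields the constant $C$. The same bound gives $\liminf(-x'/x)\ge r$: write $-x'=sx+s^2\int e^{-s\theta}x(t+\theta)\dd\theta-J_s[x]$. Alternatively, note that if $-x'/x\le r-\varepsilon$ on a long interval, then $x$ decays more slowly than $e^{-(r-\varepsilon)t}$ there. Then $\mathcal C_a[x]=(r+s)(I-\mathcal H_a)x$ combined with the mass identity $\int h_ae^{rv}<1$ would force $\mathcal C_a[x]$ to exceed its $e^{-rt}$ bound. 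I expect the cleanest route to be through $J_{-r}$: $J_{-r}[x]=x'-rx+\dots$, with $e^{rt}J_{-r}[x]$ nondecreasing, which controls the ratio.

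\emph{Lower bound and the rate $R$.} Put $w=-x'/x>0$. Since $x$ is decreasing, $x(t-a)\ge x(t)$, and
\[
 w'=w^2-\frac{x''}{x}\le w^2-\frac{x(t-a)}{x(t)}=w^2-\exp\Bigl(\int_{t-a}^t w\Bigr).
\]
Suppose $w(t_0)>R+\varepsilon$ for some large $t_0$. The comparison is with the constant solution $q$ of $q^2=e^{aq}$: for $q>R$ one has $q^2<e^{aq}$. If $w\ge q$ on the window $[t_0-a,t_0]$, then $w'\le w^2-e^{aq}$, and this pushes $w$ upward only when $w^2>e^{aq}$. The plan is to show that $w$ cannot stay above $R$: the set where $w>R$ must have bounded excursions, and on long excursions $w$ would blow up in finite time, contradicting $w$ being finite with $x,x'\to0$ and $x>0$.

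\emph{Main obstacle.} Making the Riccati comparison rigorous with a delayed integral of $w$ rather than a pointwise value is the hardest step. My first attempt is a barrier argument with the first time $w$ exceeds $R+\varepsilon$ after a window where $w\le R+\varepsilon$, combined with Jensen-type convexity of $\exp$. If that fails, I would fall back on the integral equation \eqref{eq:positive-final-integral}, $x(t)\ge\int_t^\infty(u-t)x(u-a)\dd u$, and compare $x$ against $ce^{-R(t-T)}$ through a first-crossing argument, using that $e^{-Rt}$ solves the equation exactly.

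Given $\liminf(-x'/x)\le R$ together with the bounded excursions from the barrier argument, integrating $-x'/x\le R$ on average yields $x(t)\ge ce^{-R(t-T)}$ after adjusting constants.
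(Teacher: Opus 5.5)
Your upper bound is correct and is essentially the paper's argument. Both rest on $x=\mathcal H_ax+q$ with $q=\mathcal C_a[x]/(r+s)$, $0\le q(t)\le q(T)e^{-r(t-T)}$, and $\int_0^ah_a(v)e^{rv}\dd v<1$. You close it with the maximum principle from the proof of Lemma~\ref{lem:positive-kernel-comparison}\textup{\ref{item:kernel-obstruction}}; the paper compares with $Ce^{-r(t-T)}$ via part~\textup{\ref{item:kernel-ae}}.

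The lower bound, however, is not proved. Your Riccati inequality $w'\le w^2-\exp(\int_{t-a}^tw)$ is right, but the concluding step does not follow.
\begin{itemize}
\item The bound $x\ge ce^{-R(t-T)}$ is equivalent to $\int_T^t(w-R)\le\mathrm{const}$. Neither $\liminf w\le R$ nor bounded lengths of the excursions of $w$ above $R$ controls this integral, since infinitely many excursions can each contribute a fixed positive amount.
\item No contradiction can come from a single $w(t_0)>R+\varepsilon$: for supersolutions, concentrated extra forcing can push $w$ above $R$ just before it.
\item The fallback fails for a structural reason. The kernel in $x(t)\ge\int_t^\infty(u-t)x(u-a)\dd u$ looks forward, so a first crossing yields nothing. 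Moreover, any comparison that only uses that $e^{-Rt}$ solves $y''=y(t-a)$ would equally ``prove'' $x\ge ce^{-r(t-T)}$, which is false for $x=e^{-Rt}$.
\end{itemize}
The missing idea is that in the memory form $e^{-R\cdot}$ is a \emph{homogeneous} solution: $\int_0^ah_a(v)e^{Rv}\dd v=1$. This follows by direct computation from $e^{aR}=R^2$ and $r^2e^{-ra}=s^2e^{sa}=1$, whereas the corresponding mass for $r$ is $<1$. With $z=e^{-R(t-T)}$ and $c=\min_{[T-a,T]}e^{R(t-T)}x(t)$, one gets $(I-\mathcal H_a)(x-cz)=q\ge0$ with nonnegative history. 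Lemma~\ref{lem:positive-kernel-comparison}\textup{\ref{item:kernel-ae}} then gives $x\ge cz$, the mirror image of your own upper bound. The bound $\liminf w\le R$ follows at once (otherwise $x=O(e^{-(R+\delta)t})$), so the logical order is the reverse of yours.

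The bound $\liminf w\ge r$ is also not established.
\begin{itemize}
\item An upper bound $x\le Ce^{-r(t-T)}$ constrains averages of $w$, hence at most $\limsup w$, not $\liminf w$.
\item $\liminf w<r$ supplies only a sequence of times, not long intervals.
\item The $J_{-r}$ route points the wrong way. Dropping $-J_{-r}[x]\ge0$ gives $w(t)\ge-r+r^2\int_0^ae^{-rv}x(t-v)/x(t)\dd v$, whose bootstrap map lies below the diagonal on $(0,r)$.
\end{itemize}
Your $J_s$ identity is the right starting point, but one application gives only $w\ge1/s$, and $1/s<r$ because $rs>1$ by \eqref{eq:PSS-memory-masses}. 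What is missing is a self-consistency step. Let $\ell=\liminf w$, which is finite by the lower bound. Eventually $x(t+\theta)/x(t)\ge e^{-(\ell-\delta)\theta}$ on $[-a,0]$, so $J_s[x]\le0$ yields $w(t)\ge s+\frac{s^2}{s+\ell-\delta}\bigl(e^{(s+\ell-\delta)a}-1\bigr)$. Letting $t\to\infty$ along a minimizing sequence and then $\delta\to0^+$ gives $\ell^2\ge e^{a\ell}$, i.e.\ $\ell\in[r,R]$. The paper does the same with $x''\ge e^{a(\ell-\delta)}x$ and $F=x'+\mu_\delta x$, obtaining $\ell\ge e^{a\ell/2}$.
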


\begin{proof}
Write $s=s(a)$ and $q=\mathcal C_a[x]/(r+s)$. By
Lemma~\ref{lem:PSS-actual-history} and \eqref{eq:PSS-history-functional},
\[
 x=\mathcal H_a x+q,\qquad q\ge0.
\]
Subtracting \eqref{eq:PSS-J-forced} for $s$ and $-r$ gives
\[
 q'+rq=J_s[x]\le0.
\]
Hence $0\le q(t)\le q(T)e^{-r(t-T)}$ for $t\ge T$.
Set
\[
 m_r:=\int_0^a h_a(v)e^{rv}\dd v<1.
\]
Choose $C>0$ so that
\[
 C\ge\max_{T-a\le t\le T}e^{r(t-T)}x(t),
 \qquad (1-m_r)C\ge q(T).
\]
For $v(t)=Ce^{-r(t-T)}$ we have, for every $t\ge T$,
\[
 (I-\mathcal H_a)(v-x)
 =(1-m_r)Ce^{-r(t-T)}-q(t)\ge0.
\]
The initial history of $v-x$ is nonnegative, so
Lemma~\ref{lem:positive-kernel-comparison}\textup{\ref{item:kernel-ae}}
gives the upper bound in \eqref{eq:PSS-two-sided-decay}.

The mode $z(t)=e^{-R(t-T)}$ solves $z''(t)=z(t-a)$.
Applying \eqref{eq:PSS-memory-factor-two}, and using
$\mathcal P_a z=(R-r)(R+s)z\ne0$, gives
\begin{equation}\label{eq:PSS-fast-kernel-mass}
 \mathcal H_a z=z,
 \qquad \int_0^a h_a(v)e^{Rv}\dd v=1.
\end{equation}
Choose
\[
 c:=\min_{T-a\le t\le T}e^{R(t-T)}x(t)>0.
\]
Then $x-cz\ge0$ on $[T-a,T]$ and
$(I-\mathcal H_a)(x-cz)=q\ge0$ on $[T,\infty)$.
Lemma~\ref{lem:positive-kernel-comparison}\textup{\ref{item:kernel-ae}} gives the lower bound in
\eqref{eq:PSS-two-sided-decay}.

It remains to prove \eqref{eq:PSS-log-derivative-bound}. Set
\[
 \ell:=\liminf_{t\to\infty}\left(-\frac{x'(t)}{x(t)}\right).
\]
The lower bound in \eqref{eq:PSS-two-sided-decay} gives $\ell\le R$: otherwise
$-x'/x\ge R+\delta$ eventually for some $\delta>0$, and integration would give
$x(t)=O(e^{-(R+\delta)t})$, a contradiction. Thus $\ell<\infty$.
For every $\delta>0$, eventually
$-x'/x\ge\ell-\delta$, and hence
\[
 \frac{x(t-a)}{x(t)}
 =\exp\left(\int_{t-a}^t-\frac{x'(u)}{x(u)}\dd u\right)
 \ge e^{a(\ell-\delta)}.
\]
Thus $x''\ge\mu_\delta^2x$ eventually, where
$\mu_\delta=e^{a(\ell-\delta)/2}$. Since
$F:=x'+\mu_\delta x$ satisfies $F'\ge\mu_\delta F$ and $F(t)\to0$, we have
$-x'/x\ge\mu_\delta$ eventually. Therefore
$\ell\ge e^{a(\ell-\delta)/2}$, and letting $\delta\to0^+$ gives
$\ell\ge e^{a\ell/2}$. By Lemma~\ref{lem:PSS-rR-properties},
$\ell\ge r$.

\end{proof}

\begin{remark}
With the same assumptions as in the preceding lemma, the bounds
\[
 \frac{x(t-a)}{x(t)}\le \frac{24}{a^4},\qquad
 -\frac{x'(t)}{x(t)}
 \le \frac{6}{a^3}\left(1-\frac{a^2}{2}\right),
\]
hold eventually. Indeed, by~\eqref{eq:positive-final-integral}, we have
$
 x(t)\ge \int_0^a s\,x(t-a+s)\dd s.
$
Bootstrapping gives
\[
 x(t)\ge \int_0^a\int_0^a
 su\,x(t-2a+s+u)\dd u\dd s
 \ge x(t-a)\int_0^a\int_0^{a-s}su\dd u\dd s.
\]
This proves the first estimate. The second follows from convexity and
$x(t-a+s)\ge x(t)+(a-s)(-x'(t))$.

Assuming nonoscillation, these bounds and banal convergence arguments yield a global positive decreasing supersolution $z$. Corollaries~\ref{cor:rank-one-feedback} and~\ref{cor:positive-supersolution} then imply $\mathcal E(a,b)\le1$. Several improvements to the bounds are possible, but it is not clear to the author what the optimal bounds correspond to.
\end{remark}

\begin{lemma}
\label{lem:PSS-eventual-ODE}
Let $x$ be an eventually positive decreasing solution of $x''(t)=x(\tau(t))$,
and suppose that
\[
 a_*:=\liminf_{t\to\infty}(t-\tau(t))\in(0,2/e].
\]
For every $0<a<a_*$,
\begin{equation}\label{eq:PSS-two-coordinate-ratio}
 x(t-a)\ge r(a)s(a)x(t)
 +\bigl(s(a)-r(a)\bigr)x'(t)
\end{equation}
holds eventually.
\end{lemma}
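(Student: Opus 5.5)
The plan is to reduce \eqref{eq:PSS-two-coordinate-ratio} to a sign statement about the factorized operator $\mathcal P_a=(D+r)(D-s)$. By \eqref{eq:PSS-memory-factor-one},
\[
 x(t-a)-r s\,x(t)-(s-r)x'(t)=(\mathcal H_a\mathcal P_a x)(t).
\]
Since $h_a\ge0$ on $[0,a]$, it therefore suffices to show that $\mathcal P_a x\ge0$ almost everywhere on some tail $[T_1,\infty)$. The inequality then holds for all $t\ge T_1+a$. The whole proof is an effort to extract this sign from the strict gap $a<a_*$.

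\emph{Step 1: a strict supersolution.} I fix an intermediate $a'\in(a,a_*)$. Because $x$ is eventually decreasing and $t-\tau(t)\ge a'$ eventually, we get $x''(t)=x(\tau(t))\ge x(t-a')$ eventually. Thus $x$ satisfies the hypotheses of Lemmas~\ref{lem:PSS-actual-history} and~\ref{lem:PSS-two-sided-decay} with delay $a'$. Both $x\to0$ and $x'\to0$, by the classification of positive solutions. Lemma~\ref{lem:PSS-two-sided-decay} with $a'$ gives $-x'/x\ge r(a')-\eta$ eventually, for any $\eta>0$. Hence
\[
 x''(t)\ge x(t-a')\ge \kappa\,x(t-a),\qquad \kappa:=e^{(a'-a)(r(a')-\eta)}>1,
\]
holds eventually. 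By Lemma~\ref{lem:PSS-rR-properties}, $r(a')>r(a)$, and this margin is what I intend to exploit.

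\emph{Step 2: positivity of $\mathcal P_a x$.} Set $f:=x''-x(\cdot-a)\ge(\kappa-1)x(\cdot-a)$. By \eqref{eq:PSS-memory-factor-two}, $(I-\mathcal H_a)\mathcal P_ax=f$, so $w:=\mathcal P_a x$ solves the renewal inequality $w=\mathcal H_aw+f$ with $f\ge0$. If I can find one window $[T_1-a,T_1]$ on which $w\ge0$, then Lemma~\ref{lem:positive-kernel-comparison}\ref{item:kernel-ae} propagates $w\ge0$ to $[T_1,\infty)$, and we are done.

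To produce such a window I would argue directly. First,
\[
 w=x''+(r-s)x'-rs\,x\ge \kappa x(t-a)-(r-s)(-x')-rs\,x .
\]
I bound $x(t-a)$ from below in two ways. Convexity gives $x(t-a)\ge x(t)+a(-x'(t))$. The log-derivative bound gives $x(t-a)\ge e^{a(r(a')-\eta)}x(t)$, and $e^{a\,r(a)}=r(a)^2$, so this lower factor is strictly larger than $r^2$. I then try to combine these, using $r-s<ar$, which comes from $r^2=e^{ar}$ and $s^2=e^{-as}$, so that both the $x$-coefficient and the $(-x')$-coefficient of the lower bound become nonnegative.

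\emph{Fallback for Step 2.} If this pointwise algebra turns out not to close for all $a<2/e$, I would argue by contradiction. Suppose $w$ takes negative values on every tail. Since $x''>0$, $-x'$ is decreasing, and so $-x'$ on $[t-a,t]$ is comparable to $x(t-a)-x(t)$; combined with the Remark's ratio bounds, this gives $|w|\le K\,x(t-a)$. Now iterate $w=\sum_{j<N}\mathcal H_a^jf+\mathcal H_a^Nw$. The first sum is at least $(\kappa-1)x(t-a)$. The remainder is controlled through the weighted mass $\int_0^ah_a(v)e^{\rho v}\,dv<1$ for rates $\rho$ slightly above $r(a)$, using the upper decay bound $x(t-v)\lesssim e^{\rho v}x(t)$ from Step 1. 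For large $N$ the remainder is $o(x(t-a))$, which forces $w\ge0$ eventually.

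The main obstacle is Step 2, namely turning the strict gap $a<a_*$ into eventual nonnegativity of $\mathcal P_ax$. The choice of $a'$ and the control of the decay rate of $x$ relative to the exponent $r(a)$ are the delicate points.
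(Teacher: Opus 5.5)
You have the right reduction, and it is the paper's: by \eqref{eq:PSS-memory-factor-one} and $h_a\ge0$ it suffices to show $w=\mathcal P_ax\ge0$ eventually, and $w=\mathcal H_aw+f$ with $f\ge0$. The gap is Step~2, which is the whole content of the lemma, and neither of your routes closes it.

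\emph{The pointwise algebra fails when $a_*-a$ is small.} The only upper bound on $-x'$ in your plan is convexity, $-x'(t)\le(x(t-a)-x(t))/a$. With it, your estimate becomes $w(t)/x(t)\ge(\kappa-\tfrac{r-s}{a})X+\tfrac{r-s}{a}-rs$, where $X=x(t-a)/x(t)\ge1$. Since $\log r-\log s=a(r+s)/2$, the logarithmic-mean inequality gives $r-s\ge a\,rs$, and $rs>1$ by \eqref{eq:PSS-memory-masses}. So whenever $\kappa<rs$, this lower bound is at most $\kappa-rs<0$ for every admissible $X$. The condition $\kappa<rs$ is unavoidable when $a_*-a$ is small, because then $\kappa$ is close to $1$.

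\emph{The fallback fails for a different reason.} To make $\mathcal H_a^Nw=o(x(t-a))$, you need $x(t-v)\le Ce^{\rho v}x(t)$ with $\int_0^ah_a(v)e^{\rho v}\dd v<1$, i.e.\ $\rho<R(a)$. Step~1 supplies the \emph{opposite} inequality, $x(t-v)\ge e^{(r(a')-\eta)v}x(t)$. For $\rho<r(a')-\eta$ this even contradicts the bound you invoke at large $v$. The only upper ratio bound at hand is the Remark's $x(t-a)/x(t)\le24/a^4$, which corresponds to a rate above $R(a)$, where the weighted mass exceeds one. Separately, your claim $|w|\le Kx(t-a)$ would also need an argument for the term $x''=x(\tau(t))$.

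\emph{The missing idea.} The margin to exploit is $R(a')<R(a)$, not $r(a')>r(a)$. The paper uses the fast mode $z=e^{-R(a)(t-T)}$, which satisfies $\mathcal H_az=z$ by \eqref{eq:PSS-fast-kernel-mass}, as a barrier:
\begin{itemize}
\item Pick $M$ with $w+Mz\ge0$ on $[T-a,T]$. Since $(I-\mathcal H_a)(w+Mz)=f\ge0$, Lemma~\ref{lem:positive-kernel-comparison}\textup{\ref{item:kernel-ae}} gives $w\ge-Mz$ from $T$ on, hence $w=\mathcal H_aw+f\ge f-Mz$.
\item With $d\in(a,a_*)$, one has $f\ge x(t-d)-x(t-a)\ge(d-a)x(t)$, using $-x'\ge x$, which follows from $x''\ge x$.
\item Lemma~\ref{lem:PSS-two-sided-decay} with delay $d$ gives the \emph{lower} decay bound $x\ge ce^{-R(d)(t-T)}$.
\end{itemize}
Since $R(d)<R(a)$, $f$ eventually dominates $Mz$, so $w>0$ eventually. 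No ratio bounds on $x$ are needed.
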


\begin{proof}
Fix $a<d<a_*$, and take $T$ sufficiently large. Since $x''\ge x$ and
$x,x'\to0$, we have $-x'\ge x$. Hence
\begin{equation}\label{eq:PSS-forcing-lower-bound}
 f(t):=x''(t)-x(t-a)
 \ge x(t-d)-x(t-a)\ge(d-a)x(t).
\end{equation}
Lemma~\ref{lem:PSS-two-sided-decay}, applied with delay $d$, gives
\begin{equation}\label{eq:PSS-solution-lower-bound}
 x(t)\ge c e^{-R(d)(t-T)}.
\end{equation}
Put $w=\mathcal P_ax$ and $z(t)=e^{-R(a)(t-T)}$.
By \eqref{eq:PSS-memory-factor-two} and \eqref{eq:PSS-fast-kernel-mass},
\begin{equation}\label{eq:PSS-w-renewal}
 w=\mathcal H_aw+f,\qquad \mathcal H_az=z.
\end{equation}
Choose $M\ge0$ so that $w+Mz\ge0$ on $[T-a,T]$.
Since
\[
 (I-\mathcal H_a)(w+Mz)=f\ge0,
\]
Lemma~\ref{lem:positive-kernel-comparison}\textup{\ref{item:kernel-ae}}
yields $w\ge-Mz$ on $[T,\infty)$.
Substitution into \eqref{eq:PSS-w-renewal}, together with \eqref{eq:PSS-forcing-lower-bound} and \eqref{eq:PSS-solution-lower-bound}, gives, for $t\ge T+a$,
\[
 w(t)\ge f(t)-Mz(t)
 \ge c(d-a)e^{-R(d)(t-T)}-Me^{-R(a)(t-T)}>0
\]
eventually, because $R(a)>R(d)$ by Lemma~\ref{lem:PSS-rR-properties}. Finally, positivity of $\mathcal H_a$
and \eqref{eq:PSS-memory-factor-one} give, eventually,
\[
 0\le\mathcal H_aw(t)
 =x(t-a)+(r(a)-s(a))x'(t)-r(a)s(a)x(t),
\]
which is \eqref{eq:PSS-two-coordinate-ratio}.
\end{proof}

\subsection{On \texorpdfstring{$\liminf$--$\limsup$}{liminf--limsup} excursions and the critical profile}
\label{subsec:spectral-critical-profile}
We now characterize the thought experiment of Section~\ref{subsec:positive-delay} in terms of incoming and outgoing spectral coefficients.

\smallskip
\noindent\textbf{Definition of excursion.} Fix $0<a<2/e$ and $b>a$. For $(c,y,v)\in\mathbb R^3$, first define $V=V_{c,y,v}$ on $[a,b]$ by
\begin{equation}\label{eq:PSS-terminal-family}
 V''(t)=c,\qquad a<t<b,\qquad V(b)=y,\quad V'(b)=-v.
\end{equation}
Thus
\begin{equation*} V(t)=y+v(b-t)+\frac{c}{2}(b-t)^2,\qquad a\le t\le b.\end{equation*}
Extend $V$ backwards from $a$ using the modes $e^{-rt}$ and $e^{st}$, equivalently by $\mathcal P_aV=0$, with $V,V'$ continuous at $a$. The resulting coefficients remain fixed throughout the backward extension. We call them the \emph{incoming coefficients} $A[V]$ and $B[V]$, so that
\begin{equation*} V(t)=A[V]e^{-rt}+B[V]e^{st},\qquad t\le a.\end{equation*}
Extend $V$ forwards from $b$ by the autonomous equation $V''(t)=V(t-a)$, with $V,V'$ continuous at $b$. Define the \emph{outgoing coefficients}
of $V$ by
\begin{equation*} c_0[V]=\frac{e^{-sb}J_s[V](b)}{\Delta'(s)},\qquad
 c_1[V]=\frac{e^{rb}J_{-r}[V](b)}{\Delta'(-r)}.\end{equation*}
Applying \eqref{eq:PSS-J-transport} to the defect
$c-V(u-a)$ on $[a,b]$ gives directly
\begin{equation}\label{eq:PSS-c1-formula}
\begin{aligned}
 c_0[V]&=B[V]+\frac1{\Delta'(s)}
             \int_a^b e^{-su}\bigl[c-V(u-a)\bigr]\dd u,\\
 c_1[V]&=A[V]+\frac1{\Delta'(-r)}
             \int_a^b e^{ru}\bigl[c-V(u-a)\bigr]\dd u.
\end{aligned}
\end{equation}

\begin{lemma}[{\cite{Hirsch1985,Smith1988}}]
\label{lem:positive-cone-ode}
Let $A=(a_{ij})\in\mathbb R^{2\times2}$ satisfy $a_{12},a_{21}\ge0$,
and let $f\in L^1_{\mathrm{loc}}([t_0,\infty),\mathbb R^2)$ be componentwise
nonnegative. If $z$ solves
\[
 z'(t)=Az(t)+f(t),\qquad z(t_0)\in[0,\infty)^2,
\]
then $z(t)\in[0,\infty)^2$ for every $t\ge t_0$. Any component that is
strictly positive at $t_0$ remains strictly positive for $t\ge t_0$.
If $a_{12},a_{21}>0$ and $z(t_0)\ne0$, then both components are strictly
positive for every $t>t_0$.
\end{lemma}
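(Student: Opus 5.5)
The statement to prove is Lemma~\ref{lem:positive-cone-ode}, the positivity (cooperativity) property for linear $2\times2$ systems with nonnegative off-diagonal entries. I would remove the diagonal first. Choose $\kappa\ge\max(|a_{11}|,|a_{22}|)$ and set $w(t)=e^{\kappa(t-t_0)}z(t)$. Then
\[
 w'(t)=(A+\kappa I)w(t)+e^{\kappa(t-t_0)}f(t),
\]
and $A+\kappa I$ has all entries nonnegative. Its positivity is not affected by this change, since $e^{\kappa(t-t_0)}>0$.

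By variation of constants,
\[
 w(t)=e^{(A+\kappa I)(t-t_0)}w(t_0)+\int_{t_0}^t e^{(A+\kappa I)(t-u)}e^{\kappa(u-t_0)}f(u)\dd u.
\]
The key observation is that $M:=A+\kappa I\ge0$ entrywise. Hence $e^{Mt}=\sum_n M^nt^n/n!$ is entrywise nonnegative for $t\ge0$, and its diagonal entries are at least $1$ because of the identity term. The forcing integral is therefore componentwise nonnegative, since $f\ge0$ and the integrand is integrable (Carath\'eodory solution, $f\in L^1_{\mathrm{loc}}$). Likewise $e^{M(t-t_0)}w(t_0)\ge0$. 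This gives $z(t)\in[0,\infty)^2$.

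For strict positivity of a component $z_i$ with $z_i(t_0)>0$, the diagonal bound gives
\[
 \bigl(e^{M(t-t_0)}w(t_0)\bigr)_i\ge w_i(t_0)=z_i(t_0)>0.
\]
All the other contributions are nonnegative, so $z_i(t)>0$.

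For the final claim, assume $a_{12},a_{21}>0$. Then $M$ is entrywise nonnegative with strictly positive off-diagonal entries. Expanding $e^{M\sigma}$ for $\sigma>0$, the $n=1$ term already makes the off-diagonal entries strictly positive, and the diagonal entries are at least $1$. So $e^{M\sigma}$ is entrywise strictly positive for $\sigma>0$. Since $w(t_0)\ge0$ and $w(t_0)\ne0$, every component of $e^{M(t-t_0)}w(t_0)$ is strictly positive for $t>t_0$. Adding the nonnegative forcing term keeps this positive, and multiplying back by $e^{-\kappa(t-t_0)}$ gives the conclusion.

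The only point that needs care is justifying the variation-of-constants formula for Carath\'eodory solutions with $L^1_{\mathrm{loc}}$ forcing. This is standard: $\frac{\mathrm d}{\mathrm dt}\bigl(e^{-Mt}w\bigr)=e^{-Mt}e^{\kappa(t-t_0)}f$ holds almost everywhere, the left side is absolutely continuous, and integrating gives the formula. Beyond that, the argument is a direct computation with the nonnegative matrix $M$.
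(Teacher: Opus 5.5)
Your proof is correct. Shifting to $M=A+\kappa I\ge0$ shows that $e^{M\sigma}$ is entrywise nonnegative, with diagonal entries at least $1$, and with off-diagonal entries at least $\sigma M_{ij}>0$ when $a_{12},a_{21}>0$; combined with the Carath\'eodory variation-of-constants formula, this yields all three assertions. The paper gives no proof of its own and only cites the cooperative-systems literature (Hirsch, Smith), where this fact rests on essentially the same Metzler-matrix argument, so your write-up is a self-contained version of the cited result.
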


\begin{lemma}
\label{lem:PSS-backward-excursion}
Suppose $X>0$ is decreasing on $[0,b]$, and
\[
 \mathcal P_aX\ge0\quad\text{on }[0,a],\qquad
 X''\ge X(0)\quad\text{on }[a,b].
\]
Define $U=V_{X(0),X(b),-X'(b)}$. Then
\begin{equation*} 0<U(t)\le X(t)\quad(0\le t\le b),\qquad U(0)\le X(0).\end{equation*}
If $J_s[X](b)\le0$ and $\mathcal C_a[X](b)\ge0$, then
\begin{equation*} J_s[U](b)\le0,\qquad \mathcal C_a[U](b)\ge0.\end{equation*}
\end{lemma}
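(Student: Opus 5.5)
The plan is to compare $X$ and $U$ through the defect $W:=X-U$. Everything then reduces to sign propagation for second-order operators and monotonicity of the spectral functionals in the history. The data are $c=X(0)>0$, $y=X(b)>0$ and $v=-X'(b)\ge0$ (since $X$ is decreasing). By construction $U''=X(0)$ on $(a,b)$ and $U(b)=X(b)$, $U'(b)=X'(b)$. On $[0,a]$ we have $\mathcal P_aU=0$, with $U,U'$ continuous at $a$.

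\emph{Step 1: the interval $[a,b]$.} On $[a,b]$ we have $W''=X''-X(0)\ge0$ and $W(b)=W'(b)=0$. Taylor's formula at $b$ gives
\[
W(t)=\int_t^b(u-t)W''(u)\dd u\ge0,\qquad W'(t)=-\int_t^bW''(u)\dd u\le0 .
\]
In particular $W(a)\ge0$ and $W'(a)\le0$. Positivity of $U$ here is immediate: $U(t)=y+v(b-t)+\tfrac c2(b-t)^2\ge y>0$, and $U'\le0$.

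\emph{Step 2: the interval $[0,a]$.} Factor $\mathcal P_a=(D+r)(D-s)$ and set $Y:=(D-s)W$. On $[0,a]$ we have $(D+r)Y=\mathcal P_aX\ge0$, so $e^{rt}Y(t)$ is nondecreasing. Hence $e^{rt}Y(t)\le e^{ra}Y(a)$ for $t\le a$. Since $Y(a)=W'(a)-sW(a)\le0$, this gives $Y\le0$ on $[0,a]$. Then $(e^{-st}W)'=e^{-st}Y\le0$, so $e^{-st}W$ is nonincreasing, and therefore
\[
W(t)\ge e^{s(t-a)}W(a)\ge0,\qquad 0\le t\le a .
\]
Thus $U\le X$ on $[0,b]$, and in particular $U(0)\le X(0)$.

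The same argument applied to $U$ itself, now with $\mathcal P_aU=0$, gives positivity. Here $(D-s)U$ at $a$ equals $U'(a)-sU(a)<0$, and $(D+r)\bigl((D-s)U\bigr)=0$, so $(D-s)U<0$ on $[0,a]$. Consequently $U(t)\ge e^{s(t-a)}U(a)>0$ on $[0,a]$.

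\emph{Step 3: the spectral signs.} Both $J_s[\cdot](b)$ and $\mathcal C_a[\cdot](b)$ depend only on the value and derivative at $b$ and on the history on $[b-a,b]\subset[0,b]$; this inclusion uses $b>a$. For $\lambda=s$, using $e^{-as}=s^2$, we have
\[
J_s[x](b)=x'(b)+sx(b)+s^2\int_{-a}^0e^{-s\theta}x(b+\theta)\dd\theta .
\]
The endpoint terms coincide for $U$ and $X$, and the weight is positive, so $U\le X$ gives $J_s[U](b)\le J_s[X](b)\le0$. For $\mathcal C_a$ we use \eqref{eq:PSS-history-functional}:
\[
\mathcal C_a[x](b)=(r+s)\Bigl(x(b)-\int_0^ah_a(v)x(b-v)\dd v\Bigr).
\]
Since $h_a\ge0$, $U(b)=X(b)$ and $U\le X$ on the history, we get $\mathcal C_a[U](b)\ge\mathcal C_a[X](b)\ge0$.

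The only point needing care is regularity. $U$ is $C^1$ with a piecewise smooth, locally absolutely continuous derivative, because $U$ and $U'$ are continuous at $a$. This is exactly what \eqref{eq:PSS-history-functional} requires, and I expect no real obstacle beyond it. The genuinely substantive step is Step 2, where the factorization of $\mathcal P_a$ turns the backward extension into two first-order sign-propagation arguments.
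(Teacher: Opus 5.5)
Your proof is correct and follows the paper's structure: backward Taylor expansion on $[a,b]$, backward sign propagation on $[0,a]$, then positivity of the history kernels of $J_s$ and $\mathcal C_a$ at $b$. The only variation is on $[0,a]$, where you factor $\mathcal P_a=(D+r)(D-s)$ and use two scalar integrating factors, whereas the paper writes $(X-U,(X-U)_\rho)$ with $\rho=a-t$ as a cooperative $2\times2$ system and invokes Lemma~\ref{lem:positive-cone-ode}; that route also yields $X'\le U'$ and $U'<0$ on $[0,a]$ (your argument only gives $W'\le sW$), but the statement does not need these.
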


\begin{proof}
Backward Taylor expansion gives, for $a\le t\le b$,
\[
 X(t)-U(t)=\int_t^b(u-t)\bigl(X''(u)-X(0)\bigr)\dd u\ge0,
 \qquad X'(t)-U'(t)\le0.
\]
On $[0,a]$, use the backward variable $\rho=a-t$. The pair consisting of
$X-U$ and its $\rho$-derivative has nonnegative initial data and satisfies
\[
 \frac{\mathrm d}{\mathrm d\rho}
 \begin{pmatrix}X-U\\(X-U)_\rho\end{pmatrix}
 =\begin{pmatrix}0&1\\rs&r-s\end{pmatrix}
  \begin{pmatrix}X-U\\(X-U)_\rho\end{pmatrix}
  +\begin{pmatrix}0\\(\mathcal P_aX)(a-\rho)\end{pmatrix}.
\]
Lemma~\ref{lem:positive-cone-ode} gives nonnegativity of both components. Moreover,
\begin{align*}
 U(a)&=X(b)-X'(b)(b-a)+\frac{X(0)}{2}(b-a)^2>0,\\
 -U'(a)&=-X'(b)+X(0)(b-a)>0,
\end{align*}
because $X(b)>0$, $X'(b)\le0$, and $X(0)>0$. Applying
Lemma~\ref{lem:positive-cone-ode} to $(U,U_\rho)$ gives $U>0$.
At $b$, the functions have the same value and derivative, and $U\le X$ on
$[b-a,b]$. The positive history kernels in $J_s$ and
\eqref{eq:PSS-history-functional} therefore give
\[
 J_s[U](b)\le J_s[X](b),\qquad
 \mathcal C_a[U](b)\ge\mathcal C_a[X](b).
\]
This completes the proof.
\end{proof}

We now give a spectral description of the evaluation-at-zero functional $\mathcal E$ from Section~\ref{subsec:positive-delay}.

\begin{proposition}
For every $0<a<2/e$ and $b>a$, the family
\eqref{eq:PSS-terminal-family} has a unique member $\Psi_{a,b}$ with
forcing $c=1$ and
\begin{equation*} J_s[\Psi_{a,b}](b)=J_{-r}[\Psi_{a,b}](b)=0.\end{equation*}
It is strictly positive and strictly decreasing on $\mathbb R$.
There is also a zero-forcing member $H$ with
$H(0)=1$, $J_s[H](b)=0$, positive and strictly decreasing on $\mathbb R$,
and with $J_{-r}[H](b)<0$. For some $\alpha_{a,b},\omega_{a,b}>0$, every member
$V=V_{c,y,v}$ satisfies
\begin{equation}\label{eq:PSS-positive-spectral-identity}
 V(0)=c\Psi_{a,b}(0)-\alpha_{a,b}J_s[V](b)
                        +\omega_{a,b}\mathcal C_a[V](b).\end{equation}
\end{proposition}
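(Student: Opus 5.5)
The family $V_{c,y,v}$ depends linearly on $(c,y,v)\in\mathbb R^3$. So does every functional built from it: $V(0)$, $J_s[V](b)$, $J_{-r}[V](b)$ and $\mathcal C_a[V](b)$. The history of $V$ on $[b-a,b]$ is either quadratic on $[a,b]$ or the two-mode extension, so these functionals are linear in $V$ and hence in the parameters. The plan is therefore linear algebra on $\mathbb R^3$, with positivity supplied by Theorem~\ref{thm:stable-projection-positivity} and Lemma~\ref{lem:PSS-backward-excursion}.

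\textbf{Step 1: existence and uniqueness of $\Psi_{a,b}$.} Fix $c=1$. The conditions $J_s[V](b)=J_{-r}[V](b)=0$ are two affine equations in $(y,v)$, so uniqueness amounts to invertibility of the homogeneous $2\times2$ system for $c=0$. Suppose $V_{0,y,v}$ is nonzero with both outgoing coefficients vanishing. Then its forward extension has $c_0=c_1=0$, so by the resolvent limit in Lemma~\ref{lem:PSS-actual-history} it decays faster than $e^{-rt}$. Its backward extension is $Ae^{-rt}+Be^{st}$, and the whole function solves a linear delay equation in which the segment $[a,b]$ is linear with zero forcing.

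To rule this out, I would identify $\Phi_{a,b}$ with the solution for $c=1$. By Corollary~\ref{cor:positive-shape} and the formulas \eqref{eq:PSS-c1-formula}, $\Phi_{a,b}$ satisfies exactly this family with $c=1$ and outgoing coefficients zero (its decay \eqref{eq:positive-profile-decay} forces $c_0=c_1=0$). Existence is then $\Psi_{a,b}=\Phi_{a,b}$. Uniqueness follows from the uniqueness in $X_\lambda$ in Lemma~\ref{lem:positive-operator}(ii): a homogeneous member with vanishing outgoing coefficients decays like $e^{-R t}$ forward and lies in $X_\lambda$, so it vanishes. Positivity and strict decrease then come from Corollary~\ref{cor:positive-shape}.

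\textbf{Step 2: construction of $H$.} Take $c=0$ and impose $J_s[V](b)=0$, a one-dimensional condition on $(y,v)$. The solution set is a line, and I must select the direction with $y>0,v>0$. With $J_s=0$ the forward extension solves the first-order equation \eqref{eq:spectral-first-order-reduction}. By Theorem~\ref{thm:stable-projection-positivity}, positivity of the forward part needs $c_1\ge0$. On $[a,b]$, $V$ is linear, $y+v(b-t)$. I would take $y=1,v$ determined by $J_s=0$; since $J_s$ has positive history kernels, $v>0$. Then I normalize so that $H(0)=1$, checking that $H(0)>0$ from the backward extension via Lemma~\ref{lem:positive-cone-ode}, since $U(a)>0$ and $-U'(a)>0$ as in Lemma~\ref{lem:PSS-backward-excursion}.

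The sign $J_{-r}[H](b)<0$, equivalently $\mathcal C_a[H](b)>0$, comes from the linear segment. I would compare $H$ with its own autonomous continuation: the forcing $0$ is less than $H(u-a)$, so \eqref{eq:PSS-c1-formula} gives a strictly negative defect integral. I expect this sign bookkeeping, in particular relating the forward $c_1$ to positivity of $H$ on $[b,\infty)$, to be the main obstacle. The fallback is Lemma~\ref{lem:first-order-projection-positivity} applied at $t=b$ with positive history $H$ on $[b-a,b]$.

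\textbf{Step 3: the identity.} $\{\Psi_{a,b},H,V_{0,\ast}\}$, with a third member having $J_s=0$ and $\mathcal C_a\neq0$ (or simply a basis adapted to the functionals), spans the family. The linear functional $V\mapsto V(0)-cV_\Psi(0)$ vanishes whenever $c=J_s=\mathcal C_a=0$; by Step 1 such $V$ is zero, so the functional is a combination of $J_s[V](b)$ and $\mathcal C_a[V](b)$. Evaluating on $H$ gives $\omega_{a,b}=1/\mathcal C_a[H](b)>0$. For $\alpha_{a,b}>0$, I would evaluate on a zero-forcing member with $\mathcal C_a=0$ and $J_s<0$, constructed as in Lemma~\ref{lem:PSS-backward-excursion}, which has positive value at $0$.
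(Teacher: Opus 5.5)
Your use of $\Phi_{a,b}$ for the existence, positivity and uniqueness of $\Psi_{a,b}$ is a legitimate alternative to the paper's direct construction from positive profiles. However, the proposal has a genuine gap at the one quantitative point the rest of the statement depends on: the sign of $J_s[Q](b)$ for $Q:=V_{0,0,1}$.

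\textbf{Where the gap is.} In Step~2 you need the solution $v$ of $J_s[V_{0,1,v}](b)=0$ to be positive, and you justify this by positivity of the history kernel of $J_s$. That reasoning points the wrong way. Write $V_{0,1,v}=P+vQ$ with $P:=V_{0,1,0}$. Then $J_s[Q](b)=-1+\beta_0$ with $\beta_0:=s^2\int_0^a e^{s\xi}Q(b-\xi)\dd\xi$, and the equation reads $v\,(1-\beta_0)=J_s[P](b)>0$. Positivity of the kernel only gives $\beta_0>0$; what you need is $\beta_0<1$. Without it, every zero-forcing member with $J_s[\cdot](b)=0$ has $yv\le0$. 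Such a member is either nonpositive at $b$ or nondecreasing on $[a,b]$, so no $H$ with the stated properties would exist.

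\textbf{It also decides the sign of $\alpha_{a,b}$.} Evaluating the identity on $Q$ gives $\alpha_{a,b}=\bigl(Q(0)-\omega_{a,b}\mathcal C_a[Q](b)\bigr)/\bigl(-J_s[Q](b)\bigr)$. The numerator is positive, since $Q(0)>0$, $\omega_{a,b}>0$, and $\mathcal C_a[Q](b)<0$ (because $Q(b)=0$ and $Q>0$ on $[b-a,b)$). So $\alpha_{a,b}>0$ holds exactly when $J_s[Q](b)<0$. Pointing to Lemma~\ref{lem:PSS-backward-excursion} does not supply this. That lemma compares a supersolution with the member of forcing $X(0)>0$, so it does not produce a zero-forcing member with $\mathcal C_a=0$, $J_s<0$ and positive value at $0$.

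\textbf{How the paper closes it.} It compares $Q$ on $[b-a,b]$ with the two-mode function $Q_0(b-\xi)=(e^{r\xi}-e^{-s\xi})/(r+s)$, which has the same terminal data. The cone lemma gives $Q\le Q_0$ there, hence $J_s[Q](b)\le J_s[Q_0](b)=-\Delta'(s)/(r+s)<0$.

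\textbf{A shorter repair in your framework.} By linearity, $\Phi_{a,b}=S+\Phi_{a,b}(b)P-\Phi_{a,b}'(b)Q$ with $S:=V_{1,0,0}$. The facts $J_s[\Phi_{a,b}](b)=0$, $J_s[S](b)\ge0$, $J_s[P](b)>0$, $\Phi_{a,b}(b)>0$ and $\Phi_{a,b}'(b)<0$ then force $J_s[Q](b)<0$. With this sign established, Steps~2 and~3 go through as you describe.

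\textbf{A smaller point in Step~1.} Decay faster than $e^{-rt}$, which is all the resolvent limit gives, does not place the homogeneous member $W$ in $X_\lambda$ with $\lambda\in(r,R)$. You need the $e^{-Rt}$ bound. It follows because $J_s[W]=\mathcal C_a[W]=0$ gives $W=\mathcal H_aW$ on $[b,\infty)$. Comparing $\pm W$ with $Me^{-R(t-b)}$, using $\mathcal H_ae^{-Rt}=e^{-Rt}$ and part~(ii) of Lemma~\ref{lem:positive-kernel-comparison}, yields $|W|\le Me^{-R(t-b)}$.
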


\begin{proof}
\proofpart{I}{Properties of $P$ and $Q$}
Consider $P=V_{0,1,0}$ and $Q=V_{0,0,1}$. On $[a,b]$,
\[
 P(t)=1,\qquad Q(t)=b-t.
\]
At $t=a$, the corresponding data in the backward variable $\rho=a-t$ are
\[
 (P,P_\rho)=(1,0),\qquad (Q,Q_\rho)=(b-a,1).
\]
For $t\le a$, both functions satisfy $\mathcal P_aV=0$, equivalently
\begin{equation}\label{eq:PQ-backward-system}
 \frac{\mathrm d}{\mathrm d\rho}
 \begin{pmatrix}V\\ V_\rho\end{pmatrix}
 =\begin{pmatrix}0&1\\ rs&r-s\end{pmatrix}
 \begin{pmatrix}V\\ V_\rho\end{pmatrix}.
\end{equation}
Lemma~\ref{lem:positive-cone-ode} gives $P,Q>0$ for $t<b$, with
$P'\le0$ and $Q'<0$. In
particular, $P(0),Q(0)>0$, and the definition of $J_s$ gives $J_s[P](b)>0$.

Let $Q_0$ be the solution of
\[
 \mathcal P_aQ_0=0\quad\text{on }[0,b],
 \qquad Q_0(b)=0,\quad Q_0'(b)=-1.
\]
In the backward variable $\xi=b-t$,
\[
 Q_0(b-\xi)=\frac{e^{r\xi}-e^{-s\xi}}{r+s},\qquad 0\le \xi\le b.
\]
For $0\le \xi\le b-a$, the identity $Q(t)=b-t$ on $[a,b]$ gives
$Q(b-\xi)=\xi$, while $q_0(\xi):=Q_0(b-\xi)$ satisfies
\[
 \frac{\mathrm d}{\mathrm d\xi}
 \begin{pmatrix}q_0\\ q_0'\end{pmatrix}
 =\begin{pmatrix}0&1\\ rs&r-s\end{pmatrix}
  \begin{pmatrix}q_0\\ q_0'\end{pmatrix},
 \qquad (q_0(0),q_0'(0))=(0,1).
\]
Lemma~\ref{lem:positive-cone-ode} gives $q_0,q_0'>0$ for $\xi>0$, so
$q_0''=(r-s)q_0'+rsq_0>0$. Integration from $\xi=0$ yields, for $0\le \xi\le b-a$,
\[
 q_0'(\xi)\ge1=\frac{\mathrm d}{\mathrm d\xi}Q(b-\xi),\qquad
 q_0(\xi)\ge \xi=Q(b-\xi).
\]
If $a>b-a$, then on $[b-a,a]$ the state vectors
\[
 \left(Q(b-\xi),\frac{\mathrm d}{\mathrm d\xi}Q(b-\xi)\right),
 \qquad
 \left(Q_0(b-\xi),\frac{\mathrm d}{\mathrm d\xi}Q_0(b-\xi)\right)
\]
both satisfy \eqref{eq:PQ-backward-system}. Applying
Lemma~\ref{lem:positive-cone-ode} to their difference extends the comparison
from $\xi=b-a$ to $\xi=a$. Thus,
\[
 Q(b-\xi)\le Q_0(b-\xi)=\frac{e^{r\xi}-e^{-s\xi}}{r+s},
 \qquad 0\le \xi\le a.
\]
Substituting this bound into the definition of $J_s[Q](b)$ gives
\[
 J_s[Q](b)=-1+s^2\int_0^a e^{s\xi}Q(b-\xi)\dd \xi
 \le-\frac{\Delta'(s)}{r+s}<0.
\]

\proofpart{II}{The zero-forcing profile $H$}
Define
\begin{equation*} H:=\frac{J_s[P](b)Q-J_s[Q](b)P}{J_s[P](b)Q(0)-J_s[Q](b)P(0)}.\end{equation*}
The denominator is positive, so $H>0$, $H'<0$ for $t\le b$,
$H(0)=1$, and $J_s[H](b)=0$. Its stable incoming coefficient satisfies
\[
 A[H]=\frac{e^{ra}}{r+s}\bigl(sH(a)-H'(a)\bigr)>0.
\]
By \eqref{eq:PSS-c1-formula} with zero forcing, and since $\Delta'(-r)<0$,
\[
 J_{-r}[H](b)
 =e^{-rb}\left(\Delta'(-r)A[H]
 -\int_a^b e^{ru}H(u-a)\dd u\right)<0.
\]
Thus, since $J_s[H](b)=0$,
\begin{equation}\label{eq:PSS-H-positive-coordinate}
 \mathcal C_a[H](b)=-J_{-r}[H](b)>0.\end{equation}
Theorem~\ref{thm:stable-projection-positivity}
gives positivity and strict decrease of its autonomous continuation.

\proofpart{III}{The unit-forcing profile $\Psi_{a,b}$}
To construct the unit-forcing profile, take $S=V_{1,0,0}$. Since $J_s[S](b)>0$ and $J_s[Q](b)<0$, the function
\[
 Z:=S-\frac{J_s[S](b)}{J_s[Q](b)}Q
\]
is positive before $b$, is strictly decreasing there, and satisfies
$Z(b)=0$ and $J_s[Z](b)=0$. Consequently
$\mathcal C_a[Z](b)<0$, by \eqref{eq:PSS-history-functional}. Define
\begin{equation*} \Psi_{a,b}:=Z-\frac{\mathcal C_a[Z](b)}{\mathcal C_a[H](b)}H.\end{equation*}
This positive combination is strictly positive and strictly decreasing for
$t\le b$, has unit forcing, and has $J_s[\Psi_{a,b}](b)=0$ and
$\mathcal C_a[\Psi_{a,b}](b)=0$. Thus both outgoing coefficients vanish.
Theorem~\ref{thm:stable-projection-positivity}, including $c_1=0$, gives
positivity and strict decrease for $t\ge b$.

\proofpart{IV}{Uniqueness and the representation formula}
Since $J_s[H](b)=0$, $J_s[Q](b)<0$, and $\mathcal C_a[H](b)>0$,
the functions $H,Q$ form a basis of the zero-forcing family. If $W=\lambda H+\mu Q$ has both outgoing coefficients zero, then $J_s[W](b)=0$ gives $\mu=0$, and $\mathcal C_a[W](b)=0$ gives $\lambda=0$. This proves uniqueness of $\Psi_{a,b}$.
Finally, since $\mathcal C_a[Q](b)<0$, define
\begin{equation}\label{eq:PSS-positive-transfer-weights}
 \omega_{a,b}:=\frac1{\mathcal C_a[H](b)},\qquad
 \alpha_{a,b}:=\frac{Q(0)-\omega_{a,b}\mathcal C_a[Q](b)}{-J_s[Q](b)}>0.\end{equation}
The identity $W(0)=-\alpha_{a,b}J_s[W](b)+\omega_{a,b}\mathcal C_a[W](b)$
holds on the basis $H,Q$, hence on the zero-forcing family. Apply it to
$W=V-c\Psi_{a,b}$ to obtain \eqref{eq:PSS-positive-spectral-identity}.
\end{proof}

\begin{remark}
The unique member just constructed is precisely $\Phi_{a,b}$ of
Section~\ref{subsec:positive-delay}. Indeed, the incoming representation in
Corollary~\ref{cor:positive-shape} places $\Phi_{a,b}$ in the family \eqref{eq:PSS-terminal-family},
and its decay $\Phi_{a,b},\Phi_{a,b}'=O(e^{-2t/a})$, forces
both outgoing coefficients to vanish by \eqref{eq:J-evolution}. Uniqueness
therefore gives $\Psi_{a,b}=\Phi_{a,b}$ and
$\Psi_{a,b}(0)=\mathcal E(a,b)$.
\end{remark}

\begin{corollary}
\label{cor:PSS-beta-spectral}
Fix $0<a<2/e$, $b>a$, and $c>0$. There is a unique member $U_*$ of
\eqref{eq:PSS-terminal-family} with forcing $c$ such that
\[
 U_*(0)=c,\qquad J_s[U_*](b)=0.
\]
It is given by
\begin{equation}\label{eq:PSS-terminal-representation}
 U_*=c\Phi_{a,b}+c\bigl(1-\mathcal E(a,b)\bigr)H,\end{equation}
and its outgoing coefficients satisfy $c_0[U_*]=0$ and
\begin{equation}\label{eq:PSS-exact-c1-gain}
 c_1[U_*]=c\bigl(1-\mathcal E(a,b)\bigr)c_1[H]
 =\frac{ce^{rb}}{\omega_{a,b}r(2-ar)}\bigl(1-\mathcal E(a,b)\bigr).\end{equation}
For $b\le\beta(a)$ this continuation is positive and strictly decreasing.
For $b>\beta(a)$ it is eventually negative.
\end{corollary}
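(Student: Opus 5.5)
The plan is to work entirely inside the zero-forcing/unit-forcing decomposition of the preceding proposition. The family \eqref{eq:PSS-terminal-family} with fixed forcing $c$ is an affine two-parameter family in $(y,v)$. Its zero-forcing part is spanned by $H$ and $Q$, so every member with forcing $c$ can be written as $V=c\Phi_{a,b}+\lambda H+\mu Q$; here we use the remark identifying $\Psi_{a,b}=\Phi_{a,b}$.

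\emph{Uniqueness and the formula.} Impose $J_s[V](b)=0$. Since $J_s[\Phi_{a,b}](b)=J_s[H](b)=0$ and $J_s[Q](b)<0$, this forces $\mu=0$. Next impose $V(0)=c$. Using $\Phi_{a,b}(0)=\mathcal E(a,b)$ and $H(0)=1$, this gives $\lambda=c(1-\mathcal E(a,b))$, which yields existence, uniqueness and \eqref{eq:PSS-terminal-representation}.

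\emph{Outgoing coefficients.} By definition, $c_0$ and $c_1$ are linear in $V$ through $J_s[\cdot](b)$ and $J_{-r}[\cdot](b)$. Both vanish on $\Phi_{a,b}$, so $c_0[U_*]=0$ and $c_1[U_*]=c(1-\mathcal E)c_1[H]$. For $H$ we have $J_s[H](b)=0$, and \eqref{eq:PSS-history-functional} gives $J_{-r}[H](b)=-\mathcal C_a[H](b)=-1/\omega_{a,b}$. We also have $\Delta'(-r)=-2r-ae^{ar}=-r(2-ar)$, using $e^{ar}=r^2$. Hence
\[
c_1[H]=\frac{e^{rb}}{\omega_{a,b}r(2-ar)},
\]
which is \eqref{eq:PSS-exact-c1-gain}. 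I will double-check the sign convention of $\Delta'(-r)$ here, since this is the one place an arithmetic slip could flip the conclusion.

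\emph{Sign dichotomy.} This is the main step. For $b\le\beta(a)$, Proposition~\ref{prop:positive-crossing} gives $1-\mathcal E(a,b)\ge0$. Then $U_*$ is a nonnegative combination of $\Phi_{a,b}$ and $H$. Both are strictly positive and strictly decreasing on $\mathbb R$, and $\Phi_{a,b}$ enters with the positive weight $c$, so $U_*$ is positive and strictly decreasing.

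For $b>\beta(a)$, instead $1-\mathcal E(a,b)<0$, so $c_1[U_*]<0$ while $c_0[U_*]=0$. Translate to $t_0=b$. The continuation solves the autonomous equation with $J_s[U_*](b)=0$, so Lemma~\ref{lem:PSS-actual-history}, formula \eqref{eq:PSS-actual-c1}, gives
\[
e^{r(t-b)}U_*(t)\to \frac{\mathcal C_a[U_*](b)}{r(2-ar)}.
\]
Here $\mathcal C_a[U_*](b)=-J_{-r}[U_*](b)$ has the sign opposite to $J_{-r}[U_*](b)$. Since $\Delta'(-r)<0$, this is the sign of $c_1[U_*]$, namely negative. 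Hence $U_*(t)<0$ eventually.

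As a consistency check, Theorem~\ref{thm:stable-projection-positivity} applied at time $b$ gives the same dichotomy, provided the history on $[b-a,b]$ is positive. That holds here because the backward part of $U_*$ equals $c\Phi_{a,b}+\lambda H$, and the argument above does not need the sign of $\lambda$ for the eventual negativity.
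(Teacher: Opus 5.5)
Your proposal is correct and follows essentially the same route as the paper. You split off $c\Phi_{a,b}$ and use the $H,Q$ basis for existence and uniqueness, read off $c_1[H]$ from $J_s[H](b)=0$ and $\mathcal C_a[H](b)=1/\omega_{a,b}$, and then conclude with the nonnegative-combination argument for $b\le\beta(a)$ and Lemma~\ref{lem:PSS-actual-history} (which gives $e^{rt}U_*(t)\to c_1[U_*]$) for $b>\beta(a)$.

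There are two blemishes, neither of which affects the argument. First, $\Delta'(-r)=-2r+ae^{ar}$, not $-2r-ae^{ar}$, although your final value $-r(2-ar)$ is right. Second, in the closing ``consistency check'' the history of $U_*$ on $[b-a,b]$ need not be positive when $1-\mathcal E(a,b)<0$; for large $b\ge2a$ one finds $U_*(b)<0$, so that remark should simply be dropped.
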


\begin{proof}
The formula \eqref{eq:PSS-terminal-representation} has the required
forcing, initial value, and vanishing unstable coefficient. If $U_1,U_2$
are two such members, their difference $W:=U_1-U_2$ has zero forcing and
satisfies $J_s[W](b)=0$. Hence $W$ is a multiple of $H$. Since $W(0)=0$,
that multiple vanishes.
Since $\Phi_{a,b}$ has both outgoing coefficients zero,
\eqref{eq:PSS-exact-c1-gain} follows from
\eqref{eq:PSS-H-positive-coordinate} and \eqref{eq:PSS-positive-transfer-weights}.
The crossing property of $b\mapsto\mathcal E(a,b)$ in Proposition~\ref{prop:positive-crossing}, together with \eqref{eq:PSS-exact-c1-gain}, gives the corresponding sign of $c_1[U_*]$. When $\mathcal E(a,b)\le1$, the formula is a nonnegative
linear combination of positive decreasing profiles. When $\mathcal E(a,b)>1$, the limit in Lemma~\ref{lem:PSS-actual-history} gives
$e^{rt}U_*(t)\to c_1[U_*]<0$, proving eventual negativity.
\end{proof}

\begin{remark}
For the normalization $c=1$, both $U_*$ and $\Phi_{a,b}$ solve
\eqref{eq:positive-profile-equation}. Whenever $\mathcal E(a,b)\le1$,
they are furthermore both strictly decreasing and tend to zero. If
$\mathcal E(a,b)<1$, however, they do not have the same decay rate as
$t\to\infty$. This spectral leeway, represented by the
$\bigl(1-\mathcal E(a,b)\bigr)H$ term in
\eqref{eq:PSS-terminal-representation}, will be key in the sharpness
construction below. 
\end{remark}

\begin{lemma}
\label{lem:PSS-half-line-comparison}
Let $\mu,\nu$ be locally integrable on
$[0,\infty)$, with $\nu\ge0$, and let $\sigma:[0,\infty)\to\mathbb R$ be measurable with
$\sigma(t)\le t$. Define
\[
 (Lu)(t)
 :=u''(t)+\mu(t)u'(t)
 -\nu(t)\mathbf1_{\{\sigma(t)\ge0\}}u(\sigma(t)).
\]
Suppose $X(t)>0$ and $X'(t)<0$ for every $t\ge0$, and
\[
 LX\ge0,\qquad
 LX
 \ge LY
 \quad\text{a.e. on }[0,\infty).
\]
If
\[
 X(0)=Y(0),\qquad
 X(t)-Y(t)\longrightarrow0\quad\text{as }t\to\infty,
\]
then $X(t)\le Y(t)$ for every $t\ge0$.
\end{lemma}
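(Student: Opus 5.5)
We study the difference $w:=Y-X$ and aim to show $w\ge0$ on $[0,\infty)$. By linearity of $L$,
\[
 w''+\mu w'-\nu\mathbf 1_{\{\sigma\ge0\}}w(\sigma)=LY-LX\le0,\qquad w(0)=0,\qquad w(t)\to0 .
\]
So $w$ is a supersolution in the reversed sense, and we need a maximum principle for $L$ on the half line. The positive decreasing function $X$ with $LX\ge0$ will serve as the barrier, in the spirit of Proposition~\ref{prop:BDS17}. We use the quotient $\theta:=w/X$ and argue by contradiction, assuming $\min\theta<0$.

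First, $\theta(0)=0$. Since $w\to0$ while $X$ may also tend to zero, $\theta$ need not tend to zero, so we do not compare with $X$ directly. Instead we take the one-parameter family $w+\varepsilon X$ with $\varepsilon>0$. Since $X(t)>0$ for all $t$ and $X$ is decreasing, $X(t)\ge X(T)>0$ on $[0,T]$. The plan is as follows.

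\begin{enumerate}
\item Fix $\varepsilon>0$ and suppose $w+\varepsilon X$ becomes negative somewhere. Because $w\to0$, negativity of $w+\varepsilon X$ can only occur where $w<-\varepsilon X$. We define
\[
 k:=\inf\{\kappa\ge0:\ w+\kappa X\ge0\ \text{on }[0,\infty)\}.
\]
The main task is to show $k$ is finite and attained.
\item Set $v:=w+kX\ge0$. Then
\[
 Lv\le LY-LX+kLX\le kLX .
\]
This has the wrong sign, so we reverse the roles. Write $v=kX-(X-Y)$, and note that $u:=X-Y$ satisfies $Lu\ge0$, $u(0)=0$ and $u\to0$. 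The statement is then equivalent to \emph{$Lu\ge0$, $u(0)=0$, $u\to0$ imply $u\le0$}, given a positive decreasing $X$ with $LX\ge0$.
\item Suppose $u$ has a positive maximum at some $t_0>0$; it exists because $u(0)=0$ and $u\to0$. Consider $u-\kappa X$, and lower $\kappa$ from infinity until first contact. Take the largest $\kappa>0$ such that $u\le\kappa X$ fails to be strict. Since $u\to0$ while $\kappa X>0$ is decreasing, contact happens at a finite point $t_1>0$. There $\phi:=\kappa X-u\ge0$ attains a zero minimum, so $\phi(t_1)=0$, $\phi'(t_1)=0$ and $\phi''(t_1)\ge0$ in the a.e./Carath\'eodory sense on a neighborhood.
\item Meanwhile
\[
 L\phi=\kappa LX-Lu,
\]
and we have $\phi\ge0$ with $\phi(\sigma)\ge0$. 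Hence on a right neighborhood of $t_1$,
\[
 \phi''+\mu\phi'\ge \nu\mathbf1\phi(\sigma)+\kappa LX-Lu .
\]
This is not directly signed, because $Lu\ge0$ enters with a minus sign.
\end{enumerate}

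For this reason the contact argument is better run through the first-order system, as in Lemma~\ref{lem:positive-cone-ode}. We plan to handle it by Proposition~\ref{prop:BDS17}-type reasoning, using the Wronskian nonvanishing guaranteed by the positive decreasing supersolution $X$ on every $[0,\omega]$. Solve the boundary-value problem on $[0,\omega]$ for $u$ with data $u(0)=0$ and $u(\omega)$. The Green function of this two-point problem is nonpositive, precisely because a positive decreasing supersolution exists (Proposition~\ref{prop:BDS17}(2), extended to include the drift $\mu$ by the substitution $e^{\int\mu}$). Therefore
\[
 u(t)\le z_\omega(t)\,u(\omega),
\]
where $z_\omega$ solves the homogeneous problem with $z(0)=0$ and $z(\omega)=1$, and $0\le z_\omega\le X(t)/X(\omega)$ by comparison. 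Hence
\[
 u(t)\le |u(\omega)|\,X(t)/X(\omega).
\]

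The main obstacle is letting $\omega\to\infty$. This requires $u(\omega)/X(\omega)\to0$, which fails if $X$ decays faster than $u$. We therefore expect to use $LY\le LX$ once more: apply the same Green-function bound to $Y$ to control $Y(\omega)$ relative to $X(\omega)$, that is, to show $Y\ge X\cdot$const on large intervals. This is the step we would need to work out carefully; we would first try choosing $\omega$ along a sequence where $|u(\omega)|/X(\omega)$ is minimized.
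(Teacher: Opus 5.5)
There is a genuine gap: the argument never closes. Steps 1--4 are abandoned. As you note yourself, $L\phi=\kappa LX-Lu$ has no sign, and finiteness or attainment of $k$ is never shown. The Green-function route then ends with an unproved limit. Letting $\omega\to\infty$ in $u\le u(\omega)z_\omega$ needs $u(\omega)/X(\omega)\to0$ along some sequence, i.e.\ control of the decay of $u=X-Y$ relative to $X$, and the hypotheses give none. Your suggested remedy, bounding $Y$ below by a multiple of $X$, is also unavailable, since $Y$ is not assumed positive. As written, the proposal does not prove the lemma.

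The missing idea is that no limit $\omega\to\infty$ is needed. In step 3 you already have a point $t_0>0$ where $u$ attains its positive maximum $M$, so $u'(t_0)=0$; the hypothesis $u\to0$ is only used to produce this point.

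Run your two-point argument on $[0,t_0]$ itself:
\begin{itemize}
\item Because $X$ is positive and strictly decreasing, the Wronskian does not vanish on $[0,t_0]$. The sign of the Green function then gives $u=Mz_{t_0}+v$ with $v\le0$.
\item Hence $u-Mz_{t_0}\le0$ on $[0,t_0]$, with equality at $t_0$. Comparing left derivatives gives $0=u'(t_0)\ge Mz_{t_0}'(t_0)$.
\item But $z_{t_0}=c/c(t_0)$, where $c$ is the homogeneous solution with $c(0)=0$, $c'(0)=1$. After your change of variables, $\hat c''=R\hat c(\eta)$ with $R\ge0$ and $0\le\eta(\xi)\le\xi$. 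A first-zero argument gives $\hat c>0$ and $\hat c'\ge1$, so $z_{t_0}'(t_0)>0$, a contradiction.
\end{itemize}

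This Hopf-type endpoint argument at the interior maximum is exactly the mechanism of the paper's proof. The paper instead subtracts the constant $M$, which is allowed because $L1=-\nu\mathbf 1_{\{\sigma\ge0\}}\le0$. It writes $u-M=-Mq+v$ with $q(0)=1$, $q(t_0)=0$, and uses the nonvanishing Wronskian to make $q'(t_0)<0$ strict. This gives $(u-M)'(t_0)=-Mq'(t_0)+v'(t_0)>0$, the same contradiction. Everything happens on the finite interval $[0,t_0]$, so no comparison of decay rates between $u$ and $X$ is ever needed.
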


\begin{proof}
Set $W=X-Y$. If $W$ is positive somewhere, then $W(0)=0$ and
$W(t)\to0$ imply that it attains a positive maximum $M=W(\omega)$
at some $\omega>0$, where $W'(\omega)=0$. Set $z=W-M$ on
$[0,\omega]$. Since
\[
 L1
 =-\nu(t)\mathbf1_{\{\sigma(t)\ge0\}}\le0,
\]
we have
\[
 Lz
 =LW
  -ML1\ge0,
 \qquad z(0)=-M,
 \qquad z(\omega)=z'(\omega)=0.
\]

We now reduce the equation to standard form. Set
\[
 P(t):=\int_0^t\mu(v)\dd v,
 \qquad
 \phi(t):=\int_0^t e^{-P(v)}\dd v.
\]
The function $\phi$ is strictly increasing. If
$\widehat u(\phi(t))=u(t)$, then
\[
 u''(t)+\mu(t)u'(t)=e^{-2P(t)}\widehat u''(\phi(t))
 \quad\text{a.e.}
\]
Thus, after multiplication by the positive factor $e^{2P(t)}$, the
operator $L$ becomes an operator of the form
\[
 (\widehat L\widehat u)(\xi)
 =\widehat u''(\xi)-R(\xi)\widehat u(\eta(\xi)),
 \qquad R\ge0,
 \qquad 0\le\eta(\xi)\le\xi,
\]
where on the set $\sigma(t)<0$ we take $R(\phi(t))=0$, and on the set
$\sigma(t)\ge0$ we take
\[
 R(\phi(t))=e^{2P(t)}\nu(t),
 \qquad
 \eta(\phi(t))=\phi(\sigma(t)).
\]

We shall apply the classical
comparison theory of such equations
\cite{ABBD,AzbelevZubkoLabovski1973,BDK,Labovski1975,LabovskiThesis}. Choose $T>\omega$ and write
\[
 \Omega:=\phi(\omega),\qquad \Theta:=\phi(T).
\]
The transformed function $\widehat X$ is positive on $[0,\Theta]$,
strictly decreasing, and satisfies
\[
 \widehat L\widehat X\ge0,
 \qquad \widehat X'(\Theta)=e^{P(T)}X'(T)<0.
\]
Thus, the
Wronskian of a fundamental system of $\widehat L u=0$ has no zeros on
$[0,\Theta)$. In particular, it is nonzero on $[0,\Omega]$.

Let $q$ be the solution of the two-point problem
\[
 \widehat Lq=0,\qquad q(0)=1,\qquad q(\Omega)=0.
\]
By the two-point theory \cite{Labovski1975,LabovskiThesis},
$q>0$ on $[0,\Omega)$. Hence $q'(\Omega)\le0$. In fact the strict inequality
$q'(\Omega)<0$ must hold. If $q'(\Omega)=0$, then the Wronskian of $q$ and the
Cauchy function vanishes at $\Omega$, contrary to the preceding
paragraph. Equivalently, the solution would have to be trivial, because of the terminal boundary conditions \cite{ABBD,BDK,LabovskiThesis}.

Set
\[
 f:=\widehat L\widehat z\ge0
 \qquad\text{on }[0,\Omega],
\]
and let $v$ solve
\[
 \widehat Lv=f,\qquad v(0)=v(\Omega)=0.
\]
The same Wronskian condition implies that this two-point problem is
uniquely solvable. Moreover, its Green function $G$ satisfies
\[
 G(\xi,\eta)<0\qquad(0<\xi,\eta<\Omega)
\]
by \cite[Theorem~1]{Labovski1975}. Therefore
\[
 v(\xi)=\int_0^\Omega G(\xi,\eta)f(\eta)\dd\eta\le0,
 \qquad 0\le\xi\le\Omega,
\]
and, since $v(\Omega)=0$, we have $v'(\Omega)\ge0$.

Uniqueness of the two-point problem now gives
\[
 \widehat z=-Mq+v.
\]
Consequently,
\[
 \widehat z'(\Omega)
 =-Mq'(\Omega)+v'(\Omega)>0.
\]
On the other hand,
\[
 \widehat z'(\Omega)=e^{P(\omega)}z'(\omega)=0,
\]
a contradiction. Therefore $X\le Y$ on $[0,\infty)$.
\end{proof}

\begin{lemma}
\label{lem:PSS-three-function-comparison}
Let $0<a<2/e$ and $b>a$. Let $L$ be the operator of
Lemma~\ref{lem:PSS-half-line-comparison} with
\[
 \mu(t)=(r-s)\mathbf1_{(0,a)}(t),
 \qquad
 \nu(t)=rs\mathbf1_{(0,a)}(t)+\mathbf1_{(b,\infty)}(t),
\]
and with $\sigma(t)=t$ on $(0,a)$ and $\sigma(t)=t-a$ on $(b,\infty)$.
Suppose $X>0$, $X'<0$, $X(t)\to0$, and
\[
 LX\ge X(0)\mathbf1_{(a,b)}
 \qquad\text{a.e. on }[0,\infty).
\]
Let
\[
 V:=V_{X(0),X(b),-X'(b)}
\]
be the member of the family \eqref{eq:PSS-terminal-family} determined by the terminal data of $X$,
and let $U_*$ be the member of Corollary~\ref{cor:PSS-beta-spectral} with
forcing $X(0)$. Then
\[
 0<V\le X\quad\text{on }[0,b],
 \qquad V(0)\le X(0),
\]
\[
 J_s[V](b)\le0,
 \qquad \mathcal C_a[V](b)\ge0,
\]
and, on the whole half-line,
\begin{equation*}
 V\le X\le U_*.
\end{equation*}
Moreover,
\begin{equation}\label{eq:PSS-coordinate-comparison}
 0\le \mathcal C_a[X](b)
 \le \mathcal C_a[V](b)
 \le \mathcal C_a[U_*](b).
\end{equation}
\end{lemma}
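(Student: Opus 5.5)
The plan is to split the statement into its three layers: the finite-interval comparison $V\le X$ with the sign information at $b$; the global comparison $X\le U_*$; and the chain \eqref{eq:PSS-coordinate-comparison}.

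\emph{First layer.} We unpack the hypothesis $LX\ge X(0)\mathbf1_{(a,b)}$ piece by piece:
\begin{itemize}
\item on $(0,a)$ it reads $X''+(r-s)X'-rsX\ge0$, i.e.\ $\mathcal P_aX\ge0$;
\item on $(a,b)$ it reads $X''\ge X(0)$;
\item on $(b,\infty)$ it reads $X''\ge X(t-a)$.
\end{itemize}
The first two are exactly the hypotheses of Lemma~\ref{lem:PSS-backward-excursion}. That lemma gives $0<V\le X$ on $[0,b]$ and $V(0)\le X(0)$. It also gives $J_s[V](b)\le J_s[X](b)$ and $\mathcal C_a[V](b)\ge\mathcal C_a[X](b)$, because $V$ and $X$ share value and derivative at $b$ and $V\le X$ on $[b-a,b]$.

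Next we need the signs at $b$ for $X$ itself. On $[b,\infty)$, $X$ is positive, decreasing, tends to zero, and satisfies $X''\ge X(t-a)$ there. Since $X''\ge0$, $X'\to0$ as well. Lemma~\ref{lem:PSS-actual-history} with $T=b$ then gives $J_s[X](b)\le0$ and $\mathcal C_a[X](b)\ge0$. This yields the first two inequalities of \eqref{eq:PSS-coordinate-comparison} and the asserted signs for $V$.

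\emph{Second layer, $V\le X$ on the half-line.} Beyond $b$, $V$ is continued autonomously. Set $D=X-V$, so $D''-D(t-a)=X''-X(t-a)\ge0$ for $t>b$. Its history is nonnegative on $[b-a,b]$ and $D(b)=D'(b)=0$. Iterating the Volterra form
\[
D(t)\ge\int_b^t(t-u)D(u-a)\dd u,
\]
or invoking Lemma~\ref{lem:positive-kernel-comparison}\ref{item:kernel-ae}, gives $D\ge0$.

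\emph{Second layer, $X\le U_*$.} Here I would apply Lemma~\ref{lem:PSS-half-line-comparison} with $Y=U_*$. We have $LU_*=X(0)\mathbf1_{(a,b)}$, since $U_*$ is a member of the family (so $\mathcal P_aU_*=0$ before $a$) with forcing $X(0)$ continued autonomously. Hence $LX\ge LU_*$ and $LX\ge0$. Also $U_*(0)=X(0)$ by construction.

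The remaining hypothesis, $X-U_*\to0$, is \textbf{the main obstacle}. In the oscillatory regime $b>\beta(a)$, $U_*$ is eventually negative (Corollary~\ref{cor:PSS-beta-spectral}). Since $J_s[U_*](b)=0$, $U_*$ lies in the stable hyperplane and tends to zero by \eqref{eq:PSS-actual-c1}; $X\to0$ is assumed. Thus $X-U_*\to0$ holds in all regimes. One must still check that the lemma does not require $Y>0$, and indeed it only needs $X>0$, $X'<0$. So $X\le U_*$ follows.

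\emph{Third layer.} The final inequality $\mathcal C_a[V](b)\le\mathcal C_a[U_*](b)$ compares two members of the family with the same forcing $X(0)$. I would use the identity \eqref{eq:PSS-positive-spectral-identity} for both:
\[
V(0)-U_*(0)=-\alpha_{a,b}J_s[V](b)+\omega_{a,b}\bigl(\mathcal C_a[V](b)-\mathcal C_a[U_*](b)\bigr),
\]
using $J_s[U_*](b)=0$. The left side is $V(0)-X(0)\le0$, and $-\alpha_{a,b}J_s[V](b)\ge0$. Together these force $\mathcal C_a[V](b)\le\mathcal C_a[U_*](b)$, since $\omega_{a,b}>0$.
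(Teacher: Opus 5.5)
Your proposal is correct and follows the paper's proof step for step. You unpack $LX\ge X(0)\mathbf1_{(a,b)}$ on the three pieces, use Lemma~\ref{lem:PSS-actual-history} at $T=b$ together with Lemma~\ref{lem:PSS-backward-excursion} for the signs and the comparison on $[0,b]$, use a forward argument for $X-V$ beyond $b$, apply Lemma~\ref{lem:PSS-half-line-comparison} with $Y=U_*$ (using $U_*\to0$ from $J_s[U_*](b)=0$), and apply \eqref{eq:PSS-positive-spectral-identity} to $V$ and $U_*$ for the last inequality.

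One small correction: Lemma~\ref{lem:positive-kernel-comparison}\textup{\ref{item:kernel-ae}} does not literally apply to $D(t)\ge\int_b^t(t-u)D(u-a)\dd u$, because its memory window is not of fixed length $a$. This does not matter, since your Volterra inequality, run on successive intervals $[b+ka,b+(k+1)a]$ (method of steps), already gives $D\ge0$.
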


\begin{proof}
The differential inequality for $X$ says that
$\mathcal P_aX\ge0$ on $(0,a)$, $X''\ge X(0)$ on $(a,b)$, and
$X''(t)\ge X(t-a)$ for $t>b$. Lemma~\ref{lem:PSS-actual-history},
applied from $b$ onward, gives
\[
 J_s[X](b)\le0,\qquad \mathcal C_a[X](b)\ge0.
\]
Lemma~\ref{lem:PSS-backward-excursion} therefore gives
\[
 0<V\le X\quad(0\le t\le b),\qquad V(0)\le X(0),
\]
and
\[
 J_s[V](b)\le0,\qquad \mathcal C_a[V](b)\ge0.
\]
For $t>b$, set $D=X-V$. Then
$D\ge0$ on $[b-a,b]$, $D(b)=D'(b)=0$, and
\[
 D''(t)\ge D(t-a).
\]
Standard representations \cite{AzbelevZubkoLabovski1973,Labovski1975} of solutions to the Cauchy problem at $b$, yield
$D\ge0$ on $[b,\infty)$. Thus $V\le X$ on the whole half-line.

Since $J_s[U_*](b)=0$, Lemma~\ref{lem:PSS-actual-history} gives
$U_*(t)\to0$. Hence Lemma~\ref{lem:PSS-half-line-comparison}, with
$Y=U_*$, yields $X\le U_*$.

We already have $\mathcal C_a[X](b)\ge0$. Since $V(b)=X(b)$ and
$V\le X$ on $[b-a,b]$, the representation
\eqref{eq:PSS-history-functional} gives
\[
 \mathcal C_a[V](b)-\mathcal C_a[X](b)
 =(r+s)\int_0^a h_a(v)\bigl(X(b-v)-V(b-v)\bigr)\dd v\ge0.
\]
Finally, apply \eqref{eq:PSS-positive-spectral-identity} first to $V$ and
then to $U_*$. Using $\Psi_{a,b}(0)=\mathcal E(a,b)$,
$U_*(0)=X(0)$, and $J_s[U_*](b)=0$, we obtain
\[
 \begin{aligned}
 V(0)&=X(0)\mathcal E(a,b)-\alpha_{a,b}J_s[V](b)
       +\omega_{a,b}\mathcal C_a[V](b),\\
 X(0)&=X(0)\mathcal E(a,b)+\omega_{a,b}\mathcal C_a[U_*](b).
 \end{aligned}
\]
Therefore
\[
 \omega_{a,b}\bigl(\mathcal C_a[U_*](b)-\mathcal C_a[V](b)\bigr)
 =X(0)-V(0)-\alpha_{a,b}J_s[V](b)\ge0.
\]
Together with the preceding inequalities this proves
\eqref{eq:PSS-coordinate-comparison}.
\end{proof}

\begin{proof}[\normalfont\normalsize\bfseries Alternative proof of Theorem~\ref{thm:positive-main}]
\leavevmode\par\nobreak\smallskip\noindent
Assume that an eventually positive decreasing solution $x$ exists.
Set
\[
 a_*:=\liminf_{t\to\infty}(t-\tau(t)),\qquad
 b_*:=\limsup_{t\to\infty}(t-\tau(t)).
\]
The cases $a_*=0$ and $a_*>2/e$ are treated by
Propositions~\ref{prop:BDS16} and~\ref{prop:autonomous-liminf}, as in the
first proof. Suppose $0<a_*\le2/e$ and $b_*>\beta(a_*)$. By continuity
of $\beta$, choose
\[
 0<a<a_*,\qquad a<b<b_*,\qquad b>\beta(a).
\]
Choose $t_0$ so that $x>0$ and $x'<0$ on $[t_0,\infty)$.
By the proof of Lemma~\ref{lem:PSS-eventual-ODE}, choose $S\ge t_0$
sufficiently large that
\[
 \tau(u)\ge t_0,\qquad u-\tau(u)\ge a\quad(u\ge S),
 \qquad \mathcal P_ax\ge0\quad\text{a.e. on }[S,\infty).
\]
Since $b<b_*$ and $\tau(T)\to\infty$, choose $T$ with
$T-\tau(T)>b$ and $\xi:=\tau(T)\ge S$. Set
\[
 X(t)=\frac{x(\xi+t)}{x(\xi)},\qquad t\ge0.
\]
Then $X(0)=1$, $X>0$, $X'<0$, and $X(t)\to0$.
On $(0,a)$, we have $\mathcal P_aX\ge0$.
For $a<t<b$, monotonicity gives
$\tau(\xi+t)\le\tau(T)=\xi$, hence $X''\ge1$.
For $t>b$, we have
$\tau(\xi+t)\le\xi+t-a$, hence $X''(t)\ge X(t-a)$.
Thus
\[
 LX\ge \mathbf1_{(a,b)}
 \qquad\text{a.e. on }[0,\infty).
\]
Let $V:=V_{1,X(b),-X'(b)}$ and let $U_*$ be the member in
Corollary~\ref{cor:PSS-beta-spectral} with $c=1$.
Lemma~\ref{lem:PSS-three-function-comparison} gives
\[
 0<V\le X\quad\text{on }[0,b],\qquad V(0)\le1,
\]
and
\[
 J_s[V](b)\le0,\qquad \mathcal C_a[V](b)\ge0.
\]
There are now three ways to finish the argument.\medskip

\noindent(1) \eqref{eq:PSS-positive-spectral-identity} applied to $V$ gives
\[
 1\ge V(0)
 =\mathcal E(a,b)-\alpha_{a,b}J_s[V](b)
  +\omega_{a,b}\mathcal C_a[V](b)
 \ge \mathcal E(a,b).
\]
Thus $\mathcal E(a,b)\le1$.

\medskip
\noindent(2) \eqref{eq:PSS-coordinate-comparison} gives
\[
 0\le \mathcal C_a[V](b)\le\mathcal C_a[U_*](b).
\]
Since $J_s[U_*](b)=0$, we have
$J_{-r}[U_*](b)=-\mathcal C_a[U_*](b)\le0$. As $\Delta'(-r)<0$,
$c_1[U_*]\ge0$, and \eqref{eq:PSS-exact-c1-gain} yields
\[
 0\le c_1[U_*]=\bigl(1-\mathcal E(a,b)\bigr)c_1[H],\qquad c_1[H]>0.
\]
Thus again $\mathcal E(a,b)\le1$.

\medskip
\noindent(3) Lemma~\ref{lem:PSS-three-function-comparison}
gives $X\le U_*$ on the whole half-line. Since $b>\beta(a)$,
Corollary~\ref{cor:PSS-beta-spectral} gives $c_1[U_*]<0$, so $U_*$ is
eventually negative. This contradicts $X>0$.

Hence in every case we obtain a contradiction to $b>\beta(a)$, and therefore
$b_*\le\beta(a_*)$.
\end{proof}

\subsection{Sharpness}
\label{subsec:spectral-excursions-sharpness}

\begin{proof}[\normalfont\normalsize\bfseries Alternative proof of Theorem~\ref{thm:positive-sharpness}]
\leavevmode\par\nobreak\smallskip\noindent
By Proposition~\ref{prop:BDS17}, it is enough to prove the endpoint case $b=\beta(a)$. 
Parts~I--III treat the case  $0<a<2/e$. The case $a=0$ is handled in Part~IV. The case $a=2/e$ is trivial.

\proofpart{I}{Overview of the construction}
Choose
\[
 a<b_1<b_2<\cdots<\beta(a),\qquad b_j\longrightarrow\beta(a).
\]
For each $j$, let $U_{b_j}$ denote the member $U_*$ of
Corollary~\ref{cor:PSS-beta-spectral} with $c=1$ and terminal point $b_j$.
Write its incoming part as
\[
 U_{b_j}(t)=A_{b_j}e^{-rt}+B_{b_j}e^{st},\qquad t\le a,
\]
and set
\[
 \rho_j:=\frac{B_{b_j}}{A_{b_j}}.
\]
Part~II shows that $\rho_j\in(-1,0)$. Then
\[
 U_{b_j}(\theta)
 =\frac{e^{-r\theta}+\rho_j e^{s\theta}}{1+\rho_j},
 \qquad -a\le\theta\le0.
\]
At the start of recovery, let $D>0$ and $G$ denote the coefficients of
$e^{-rt}$ and $e^{st}$, and write $q=G/D$. At time $t$, the ratio of their
contributions is
\[
 \rho(t):=\frac{Ge^{st}}{De^{-rt}}=q e^{(r+s)t}.
\]
The canonical excursion has $q=0$. We perturb its
endpoint from $b_j$ to $\ell$ to obtain $q=-\varepsilon<0$. Waiting for a time $\Lambda$ such that
\[
 \rho(\Lambda)=-\varepsilon e^{(r+s)\Lambda}=\rho_{j+1}
\]
makes the ratio of the two modal contributions equal to that of the
next canonical incoming profile. As $\varepsilon\to0^+$, the required recovery
time tends to infinity. Meanwhile, the projection onto the invariant hyperplane $\Gamma_s$ of Remark~\ref{rem:spectral-first-order-reduction}, after normalization, converges uniformly to $e^{-rt}$.

Let
\[
 X_1:=\{\varphi\in C^1([-a,0]):\varphi(0)=1\},
\]
and equip $X_1$ with the topology inherited from $C^1([-a,0])$. For
$\varphi,\psi\in X_1$ write
\[
 \|\varphi-\psi\|_{X_1}:=\|\varphi-\psi\|_{C^1([-a,0])}.
\]
For each $j$, fix a sufficiently small neighborhood $\mathcal V_j$ of
$U_{b_j}$ in $X_1$. Parts~II and~III show that these neighborhoods can be
chosen to have the following return property. If
$\mathcal R_{j,\varepsilon}(u)$ denotes the normalized history obtained
from $u\in\mathcal V_j$ after the corresponding excursion and recovery,
then
\[
 \sup_{u\in\mathcal V_j}
 \|\mathcal R_{j,\varepsilon}(u)-U_{b_{j+1}}\|_{X_1}
 \longrightarrow0
 \qquad(\varepsilon\to0^+).
\]
Thus this one fixed source neighborhood can be carried into \emph{any prescribed
neighborhood, however small}, of $U_{b_{j+1}}$ by taking $\varepsilon$
sufficiently small. For suitable $\bar\varepsilon_j>0$, chosen in Part~III,
we may also arrange that
$
 |\ell(u,\varepsilon)-b_j|
 \longrightarrow0$ (uniformly in $u\in \mathcal V_{j},\varepsilon\le \bar\varepsilon_j$).

Take $T_1=0$ and prescribe the initial history
$x(\theta)=U_{b_1}(\theta)$ for $-a\le\theta\le0$.
Inductively, suppose that the normalized history at $T_j$, denoted by
$u_j$, belongs to $\mathcal V_j$. Choose
$0<\varepsilon_j\le\bar\varepsilon_j$ so small that the normalized history at the end
of the recovery belongs to $\mathcal V_{j+1}$. Put
\[
 \ell_j:=\ell(u_j,\varepsilon_j),
 \qquad
 T_{j+1}=T_j+\ell_j+\Lambda_j,
\]
where $\Lambda_j$ is the corresponding recovery length. Continuing inductively
constructs a strictly positive decreasing solution on $[0,\infty)$, and
$\ell_j\to\beta(a)$.

Define
\[
 \tau(t)=
 \begin{cases}
  T_j,&T_j+a\le t\le T_j+\ell_j,\\
  t-a,&\text{otherwise}
 \end{cases}.
\]
The function $\tau$ is nondecreasing and satisfies $\tau(t)\le t$, and
\[
 \liminf_{t\to\infty}(t-\tau(t))=a,
 \qquad
 \limsup_{t\to\infty}(t-\tau(t))=\beta(a).
\]

\proofpart{II}{The excursion}

\smallskip
\noindent\emph{The excursion for $U_{b_j}$.}
For each $j$, write the incoming part of the profile $U_{b_j}$ introduced in
Part~I as
\[
 U_{b_j}(t)=A_{b_j}e^{-rt}+B_{b_j}e^{st},\qquad t\le a.
\]
Since $U_{b_j}(0)=1$, one has $A_{b_j}+B_{b_j}=1$. Moreover,
\eqref{eq:PSS-c1-formula} and $c_0[U_{b_j}]=0$ give
\[
 B_{b_j}=-\frac1{\Delta'(s)}\int_a^{b_j}
 e^{-sv}\bigl[1-U_{b_j}(v-a)\bigr]\dd v<0.
\]
Hence $A_{b_j}>0$, so
\[
 \rho_j=\frac{B_{b_j}}{A_{b_j}}\in(-1,0),
\]
as asserted in Part~I. At the end of the excursion,
\[
 \frac{J_{-r}[U_{b_j}](b_j)}{\Delta'(-r)}
 =e^{-rb_j}c_1[U_{b_j}]>0,
\]
where positivity follows from $b_j<\beta(a)$.

\smallskip
\noindent\emph{Perturbations of the incoming history and excursion endpoint.}
Let $u\in X_1$ be sufficiently close to $U_{b_j}$, and let $\ell>a$ be a plateau endpoint near $b_j$. Starting from $u$, solve
\[
 y''(t)=y(t-a),\quad 0<t<a,
 \qquad
 y''(t)=1,\quad a<t<\ell,
\]
and continue from $\ell$ by $y''(t)=y(t-a)$. Put
\[
 D(u,\ell):=
 \frac{J_{-r}[y](\ell)}{\Delta'(-r)},\qquad
 G(u,\ell):=
 \frac{J_s[y](\ell)}{\Delta'(s)},\qquad
 q(u,\ell):=\frac{G(u,\ell)}{D(u,\ell)}.
\]
At $(U_{b_j},b_j)$ one has $D(U_{b_j},b_j)=e^{-rb_j}c_1[U_{b_j}]>0$ and $q=0$. Furthermore,
using \eqref{eq:PSS-J-forced} on the plateau,
\[
 \frac{\partial G}{\partial\ell}(u,\ell)
 =\frac{sJ_s[y](\ell)+1-y(\ell-a)}{\Delta'(s)}.
\]
Hence
\[
 \left.\frac{\partial G}{\partial\ell}\right|_{(U_{b_j},b_j)}
 =\frac{1-U_{b_j}(b_j-a)}{\Delta'(s)}>0.
\]
Here $b_j-a>0$ and $U_{b_j}$ is strictly decreasing with $U_{b_j}(0)=1$.
For $\ell=b_j$, $G(U_{b_j},b_j)=0$, so
\[
 \left.\frac{\partial q}{\partial\ell}\right|_{(U_{b_j},b_j)}
 =\frac{1-U_{b_j}(b_j-a)}{D(U_{b_j},b_j)\Delta'(s)}>0.
\]
Near $(U_{b_j},b_j)$, the maps $D$ and $G$ are jointly $C^1$ in the
Fr\'echet sense. Since $D>0$ there, the same holds for $q=G/D$. Thus the
implicit-function theorem, applied to
$q(u,\ell)+\varepsilon=0$, gives a unique $C^1$ map
\[
 (u,\varepsilon)\longmapsto\ell(u,\varepsilon),
 \qquad
 \ell(U_{b_j},0)=b_j,
\]
defined for $u$ sufficiently close to $U_{b_j}$ and
$\varepsilon$ sufficiently small, such that
\[
 D(u,\ell(u,\varepsilon))>0,
 \qquad
 q(u,\ell(u,\varepsilon))=-\varepsilon.
\]
Since $b_j<\beta(a)$, Corollary~\ref{cor:PSS-beta-spectral} gives
$U_{b_j}>0$ and $U_{b_j}'<0$. By continuous dependence in $X_1\times\mathbb R$
on $(u,\ell)$, after shrinking $\mathcal V_j$ and the interval of
admissible endpoints $\ell$ around $b_j$, the corresponding excursion is
also strictly positive and strictly decreasing.

\proofpart{III}{The recovery}

\smallskip
\noindent\emph{Application of Lemma~\ref{lem:PSS-actual-history}.}
Fix $j$. After shrinking $\mathcal V_j$ if necessary, the application of the implicit function theorem
above gives, for every $u\in\mathcal V_j$ and
$0\le\varepsilon\le\varepsilon_j^0$, an endpoint
$\ell=\ell(u,\varepsilon)$ near $b_j$. Shift the origin to the end of the excursion and write
$y_{u,\varepsilon}$ for the recovery solution and
\[
 D_{u,\varepsilon}:=D\bigl(u,\ell(u,\varepsilon)\bigr)>0,
 \qquad
 G_{u,\varepsilon}:=G\bigl(u,\ell(u,\varepsilon)\bigr)
 =-\varepsilon D_{u,\varepsilon}.
\]
Set
\[
 z_{u,\varepsilon}(t)
 :=y_{u,\varepsilon}(t)-G_{u,\varepsilon}e^{st}.
\]
The initial history of $z_{u,\varepsilon}$ belongs to the invariant hyperplane $\Gamma_s$ of Remark~\ref{rem:spectral-first-order-reduction}, so $J_s[z_{u,\varepsilon}](0)=0$. Since
$z_{u,\varepsilon}$ satisfies the autonomous delay equation during the
recovery, \eqref{eq:J-evolution} gives
$J_s[z_{u,\varepsilon}](t)=0$ for all $t\ge0$. Since
$J_{-r}[e^{st}]=0$, we have
\[
 \frac{\mathcal C_a[z_{u,\varepsilon}](0)}{r(2-ar)}
 =-\frac{J_{-r}[y_{u,\varepsilon}](0)}{r(2-ar)}
 =D_{u,\varepsilon}.
\]
By shrinking $\mathcal V_j$ and
$\varepsilon_j^0$ once more, all recovery-start histories of
$z_{u,\varepsilon}$ obtained in this way lie in a fixed sufficiently
small bounded $C^1$-neighborhood and $D_{u,\varepsilon}$ stays bounded
away from zero. Hence the uniformity
assertion in Lemma~\ref{lem:PSS-actual-history} gives
\begin{equation}\label{eq:recovery-uniform-stable-mode}
 \sup_{\substack{u\in\mathcal V_j\\0\le\varepsilon\le\varepsilon_j^0}}
 \left\|
 \frac{z_{u,\varepsilon}(t+\cdot)}
 {D_{u,\varepsilon}e^{-rt}}-e^{-r\cdot}
 \right\|_{C^1([-a,0])}
 \longrightarrow0
 \qquad(t\to\infty).
\end{equation}

\smallskip
\noindent\emph{Spectral convergence during recovery.}
Set
\[
 \Lambda_j(\varepsilon)
 :=\frac1{r+s}\log\frac{-\rho_{j+1}}{\varepsilon}.
\]
For sufficiently small $\varepsilon$, one has $\Lambda_j(\varepsilon)>0$ and
$\Lambda_j(\varepsilon)\to\infty$ as $\varepsilon\to0^+$. For $0\le t\le \Lambda_j(\varepsilon)$ define
\[
 \rho(t):=\frac{G_{u,\varepsilon}e^{st}}
 {D_{u,\varepsilon}e^{-rt}}
 =-\varepsilon e^{(r+s)t}.
\]
Then
\[
 \rho_{j+1}\le\rho(t)<0,
 \qquad
 \rho(\Lambda_j(\varepsilon))=\rho_{j+1}.
\]
Hence
\[
 \frac{y_{u,\varepsilon}(t+\theta)}
 {D_{u,\varepsilon}e^{-rt}}
 =\frac{z_{u,\varepsilon}(t+\theta)}
 {D_{u,\varepsilon}e^{-rt}}+\rho(t)e^{s\theta},
 \qquad -a\le\theta\le0.
\]
Since $\Lambda_j(\varepsilon)\to\infty$ as $\varepsilon\to0^{+}$,
\eqref{eq:recovery-uniform-stable-mode} gives
\[
 \sup_{u\in\mathcal V_j}
 \left\|
 \frac{z_{u,\varepsilon}(\Lambda_j(\varepsilon)+\cdot)}
 {D_{u,\varepsilon}e^{-r\Lambda_j(\varepsilon)}}-e^{-r\cdot}
 \right\|_{C^1([-a,0])}
 \longrightarrow0
 \qquad(\varepsilon\to0^{+}).
\]
Since
\[
 \frac{G_{u,\varepsilon}e^{s\Lambda_j(\varepsilon)}}
 {D_{u,\varepsilon}e^{-r\Lambda_j(\varepsilon)}}
 =\rho_{j+1},
\]
it follows uniformly for $u\in\mathcal V_j$ that
\[
 \frac{y_{u,\varepsilon}(\Lambda_j(\varepsilon)+\theta)}
 {D_{u,\varepsilon}e^{-r\Lambda_j(\varepsilon)}}
 =e^{-r\theta}+\rho_{j+1}e^{s\theta}+o_{C^1}(1),
 \qquad -a\le\theta\le0.
\]
In particular,
\[
 \frac{y_{u,\varepsilon}(\Lambda_j(\varepsilon))}
 {D_{u,\varepsilon}e^{-r\Lambda_j(\varepsilon)}}
 =1+\rho_{j+1}+o(1)
\]
uniformly for $u\in\mathcal V_j$. Since
$1+\rho_{j+1}>0$, division by the last expression gives
\begin{equation}\label{eq:recovery-return-X1}
 \sup_{u\in\mathcal V_j}
 \left\|
 \frac{y_{u,\varepsilon}(\Lambda_j(\varepsilon)+\cdot)}
 {y_{u,\varepsilon}(\Lambda_j(\varepsilon))}
 -U_{b_{j+1}}
 \right\|_{X_1}
 \longrightarrow0
 \qquad(\varepsilon\to0^{+}).
\end{equation}

\smallskip
\noindent\emph{Sign and monotonicity during recovery.}
For $\rho\in[\rho_{j+1},0]$ and $-a\le\theta\le0$, set
\[
 h_\rho(\theta):=e^{-r\theta}+\rho e^{s\theta}.
\]
Then
\begin{equation}\label{eq:recovery-two-mode-bounds}
 h_\rho(\theta)\ge1+\rho_{j+1}>0,
 \qquad
 h_\rho'(\theta)
 =-re^{-r\theta}+s\rho e^{s\theta}\le-r<0.
\end{equation}
Set
\[
 \eta_j:=\frac12\min\{1+\rho_{j+1},r\}>0.
\]
By \eqref{eq:recovery-uniform-stable-mode}, there is a number $R_j>0$, independent of
$u\in\mathcal V_j$ and of
$0\le\varepsilon\le\varepsilon_j^0$, such that
\[
 \sup_{\substack{u\in\mathcal V_j\\0\le\varepsilon\le\varepsilon_j^0}}
 \left\|
 \frac{z_{u,\varepsilon}(t+\cdot)}
 {D_{u,\varepsilon}e^{-rt}}-e^{-r\cdot}
 \right\|_{C^1([-a,0])}
 <\eta_j,
 \qquad t\ge R_j.
\]
On the fixed interval $[0,R_j]$, the canonical recovery corresponding to
$(U_{b_j},0)$ is strictly positive and strictly decreasing. Continuous
dependence allows $\mathcal V_j$ and $\varepsilon_j^0$ to be shrunk so that
the same signs hold for all $u\in\mathcal V_j$ and
$0\le\varepsilon\le\varepsilon_j^0$. 
Now choose $\bar\varepsilon_j\in(0,\varepsilon_j^0]$ so small that
\[
 \Lambda_j(\varepsilon)>R_j
 \qquad(0<\varepsilon\le\bar\varepsilon_j).
\]
Fix any $u\in\mathcal V_j$ and any
$0<\varepsilon\le\bar\varepsilon_j$. For
$R_j\le t\le \Lambda_j(\varepsilon)$ and $-a\le\theta\le0$, put
\[
 E_{u,\varepsilon,t}(\theta)
 :=\frac{z_{u,\varepsilon}(t+\theta)}
 {D_{u,\varepsilon}e^{-rt}}-e^{-r\theta}.
\]
Then $\|E_{u,\varepsilon,t}\|_{C^1([-a,0])}<\eta_j$, while
\[
 \frac{y_{u,\varepsilon}(t+\theta)}
 {D_{u,\varepsilon}e^{-rt}}
 =h_{\rho(t)}(\theta)+E_{u,\varepsilon,t}(\theta).
\]
Differentiating this identity with respect to $\theta$ and using
\eqref{eq:recovery-two-mode-bounds}, together with
$\|E_{u,\varepsilon,t}\|_{C^1([-a,0])}<\eta_j
\le\frac12\min\{1+\rho_{j+1},r\}$, gives
\[
 \frac{y_{u,\varepsilon}(t+\theta)}
 {D_{u,\varepsilon}e^{-rt}}
 >\frac{1+\rho_{j+1}}2>0,
 \qquad
 \frac{y_{u,\varepsilon}'(t+\theta)}
 {D_{u,\varepsilon}e^{-rt}}
 <-\frac r2<0.
\]
These inequalities hold for $R_j\le t\le\Lambda_j(\varepsilon)$ and
$-a\le\theta\le0$.
Thus, for every $0<\varepsilon\le\bar\varepsilon_j$, the entire recovery
is strictly positive and strictly decreasing, uniformly for
$u\in\mathcal V_j$. Together with \eqref{eq:recovery-return-X1}, this
proves the return property stated in Part~I.

\proofpart{IV}{The case $a=0$}
Here $\beta(0)=\sqrt2$, and the return is exact because off the plateaus the
equation is the ordinary differential equation $y''=y$. Choose
\[
 0<b_1<b_2<\cdots<\beta(0),\qquad b_j\longrightarrow\beta(0).
\]
\smallskip
\noindent\emph{The excursion for $U_{b_j}$.}
For each $j$, put
\[
 v_{b_j}:=-\frac{b_j^2+2b_j+2}{2(b_j+1)}
\]
and define the canonical excursion by
\[
 U_{b_j}(t)=1+v_{b_j}t+\frac{t^2}{2},\qquad 0\le t\le b_j.
\]
Extend it to $t\le0$ by the equation $y''=y$. Thus
\[
 U_{b_j}(t)=A_{b_j}e^{-t}+B_{b_j}e^t,\qquad
 A_{b_j}=\frac{1-v_{b_j}}2,\qquad B_{b_j}=\frac{1+v_{b_j}}2,
\]
and
\[
 \rho_j:=\frac{B_{b_j}}{A_{b_j}}
 =-\frac{b_j^2}{(b_j+2)^2}\in(-1,0).
\]
At the end of the excursion,
\[
 U_{b_j}(b_j)=\frac{2-b_j^2}{2(b_j+1)}>0,
 \qquad
 U_{b_j}'(b_j)=-U_{b_j}(b_j).
\]
Since $U_{b_j}'$ is increasing and is still negative at $b_j$, the whole
excursion is strictly decreasing, and hence
$U_{b_j}\ge U_{b_j}(b_j)>0$ on $[0,b_j]$.

\smallskip
\noindent\emph{The excursion for perturbed lengths.}
Fix $j$ and stop the $b_j$-excursion at a point $\ell$ near $b_j$. Put
\[
 Y_j(\ell):=1+v_{b_j}\ell+\frac{\ell^2}{2},\qquad
 P_j(\ell):=v_{b_j}+\ell.
\]
After shifting the origin to $\ell$, the recovery has the form
\[
 y(t)=D_j(\ell)e^{-t}+G_j(\ell)e^t,
\]
where
\[
 D_j(\ell)=\frac{Y_j(\ell)-P_j(\ell)}2,
 \qquad
 G_j(\ell)=\frac{Y_j(\ell)+P_j(\ell)}2.
\]
At $\ell=b_j$ one has $D_j(b_j)=U_{b_j}(b_j)>0$ and $G_j(b_j)=0$. Moreover,
\[
 \frac{\mathrm d}{\mathrm d\ell}G_j(\ell)\Big|_{\ell=b_j}=\frac{1-U_{b_j}(b_j)}2>0.
\]
Consequently, for
\[
 q_j(\ell):=\frac{G_j(\ell)}{D_j(\ell)},
\]
one has
\[
 q_j(b_j)=0,
 \qquad
 \frac{\mathrm d}{\mathrm d\ell}q_j(\ell)\Big|_{\ell=b_j}=\frac{1-U_{b_j}(b_j)}{2U_{b_j}(b_j)}>0.
\]
Thus, for every sufficiently small $\varepsilon>0$, there is a unique
$\ell_j(\varepsilon)<b_j$ close to $b_j$ such that
\[
 q_j(\ell_j(\varepsilon))=-\varepsilon,
 \qquad
 \ell_j(\varepsilon)\longrightarrow b_j
 \quad(\varepsilon\to0^{+}).
\]
The shortened excursion remains strictly positive and strictly decreasing.

\smallskip
\noindent\emph{The recovery and exact return.}
Write $D=D_j(\ell_j(\varepsilon))>0$ and $G=-\varepsilon D$. During the
recovery,
\[
 y(t)=De^{-t}+Ge^t
     =De^{-t}\bigl(1+\rho(t)\bigr),
 \qquad
 \rho(t):=\frac{Ge^t}{De^{-t}}=-\varepsilon e^{2t}.
\]
Choose
\[
 \Lambda_j(\varepsilon):=\frac12\log\frac{-\rho_{j+1}}{\varepsilon}.
\]
For sufficiently small $\varepsilon>0$, one has $\Lambda_j(\varepsilon)>0$,
and $\Lambda_j(\varepsilon)\to\infty$ as $\varepsilon\to0^+$. Moreover,
\[
 \rho_{j+1}\le\rho(t)<0,
 \qquad 0\le t\le \Lambda_j(\varepsilon),
 \qquad
 \rho(\Lambda_j(\varepsilon))=\rho_{j+1}.
\]
Hence throughout the recovery
\[
 y(t)=De^{-t}(1+\rho(t))
 \ge De^{-t}(1+\rho_{j+1})>0,
\]
while
\[
 y'(t)=De^{-t}(-1+\rho(t))<0.
\]
At $t=\Lambda_j(\varepsilon)$, the logarithmic derivative is exactly
\[
 \frac{y'(\Lambda_j(\varepsilon))}{y(\Lambda_j(\varepsilon))}
 =\frac{-1+\rho_{j+1}}{1+\rho_{j+1}}
 =v_{b_{j+1}}.
\]
This is the incoming logarithmic derivative of the next canonical excursion.

\smallskip
\noindent\emph{Concatenation of the excursions and recoveries.}
The rest of the construction is the same as in the case $a>0$. Choose
$\varepsilon_j>0$ sufficiently small so that
$\ell_j(\varepsilon_j)-b_j\to0$ as $j\to\infty$.
Concatenating the excursions and recoveries gives a strictly positive,
strictly decreasing solution with a nondecreasing delayed argument. Since
$\ell_j(\varepsilon_j)\to\beta(0)$ and the delay vanishes during each recovery,
\[
 \liminf_{t\to\infty}(t-\tau(t))=0,
 \qquad
 \limsup_{t\to\infty}(t-\tau(t))=\beta(0).
\]

\end{proof}

\begin{acknowledgements}
The AI tools Claude (Anthropic) and ChatGPT (OpenAI) were used during the prepa-
ration of this manuscript. The author has reviewed all AI-assisted content and takes full
responsibility for the final manuscript.
\end{acknowledgements}

\end{document}